\newtheorem{theorem}{Theorem}[section]
\newtheorem{lemma}[theorem]{Lemma}
\newtheorem{definition}[theorem]{Definition}
\newtheorem{conjecture}[theorem]{Conjecture}
\newtheorem{example}[theorem]{Example}
\newtheorem{proposition}[theorem]{Proposition}
\newtheorem{corollary}[theorem]{Corollary}
\theoremstyle{remark}
\newtheorem{remark}[theorem]{Remark}
\begin{document}

\newcommand{\C}{\mathbb{C}}
\newcommand{\maskx}{{\underline{x}}}
\newcommand{\bs}{\backslash}
\newcommand{\dfct}{\mathfrak{d}}
\newcommand{\dfcts}{\mathcal{D}}
\newcommand{\ndfcts}{\mathcal{E}}
\newcommand{\duble}{\widetilde}
\newcommand{\even}{\operatorname{even}}
\newcommand{\odd}{\operatorname{odd}}
\newcommand{\omegaw}{\bm{\omega}}
\newcommand{\power}{\mathcal{P}}
\newcommand{\wordd}{\mathbf{w}}
\newcommand{\graph}{\mathcal{G}}
\newcommand{\s}{\mathbf{s}}
\newcommand{\id}{\operatorname{id}}
\newcommand{\rest}{\big|}
\newcommand{\Quadr}{\mathcal{T}}
\newcommand{\red}{\mathcal{R}^H}
\newcommand{\sgn}{\operatorname{sgn}}
\newcommand{\prm}{\mathbf{p}}
\newcommand{\iprm}{\mathbf{q}}
\newcommand{\rev}{\mathfrak{r}}
\newcommand{\lshft}[1]{\accentset{\leftarrow}{#1}}               
\newcommand{\rshft}[1]{\accentset{\rightarrow}{#1}}              
\newcommand{\abs}[1]{\left|{#1}\right|}
\newcommand{\Dwordd}{\duble\wordd}
\newcommand{\card}[1]{\left\vert{#1}\right\vert}
\newcommand{\supp}{\operatorname{supp}}
\newcommand{\GL}{\operatorname{GL}}

\title{Conjectures about certain parabolic Kazhdan--Lusztig polynomials}
\author{Erez Lapid}
\address{Department of Mathematics, Weizmann Institute of Science, Rehovot 7610001 Israel}
\email{erez.m.lapid@gmail.com}

\begin{abstract}
Irreducibility results for parabolic induction of representations of the general linear group over a local non-archimedean
field can be formulated in terms of Kazhdan--Lusztig polynomials of type $A$.
Spurred by these results and some computer calculations, we conjecture that certain alternating sums of Kazhdan--Lusztig polynomials known
as parabolic Kazhdan--Lusztig polynomials satisfy properties analogous to those of the ordinary ones.
\end{abstract}

\maketitle

\setcounter{tocdepth}{1}
\tableofcontents

\section{Introduction}

Let $P_{u,w}$ be the Kazhdan--Lusztig polynomials with respect to the symmetric groups $S_r$, $r\ge1$.
Recall that $P_{u,w}=0$ unless $u\le w$ in the Bruhat order.
Fix $m,n\ge1$ and let $H\simeq S_m\times\dots\times S_m$ be the parabolic subgroup of $S_{mn}$
of type $(m,\dots,m)$ ($n$ times).
(In the body of the paper we will only consider the case $m=2$, but for the introduction $m$ is arbitrary.)
The normalizer of $H$ in $S_{mn}$ is $H\rtimes N$ where
\[
N=\{\duble w:w\in S_n\}\text{ and }\duble w(mi-j)=mw(i)-j,\ i=1,\dots,n,\ j=0,\dots,m-1.
\]
As a consequence of representation-theoretic results, it was proved in \cite{1605.08545}
that if $x,w\in S_n$ with $x\le w$ and there exists $v\le x$ such that $P_{v,w}=1$ and $v$ is $(213)$-avoiding
(i.e., there do not exist indices $1\le i<j<k\le n$ such that $v(j)<v(i)<v(k)$) then
\begin{equation} \label{eq: idat1}
\sum_{u\in H}\sgn u\ P_{\duble xu,\duble w}(1)=1.
\end{equation}
(We refer the reader to \cite{1605.08545} for more details. The representation-theoretic context will not play
an active role in the current paper.)
Motivated by this result, we carried out some computer calculations which suggest the following conjectural refinement.
\begin{conjecture} \label{conj: main}
For any $x,w\in S_n$ with $x\le w$ write
\begin{equation} \label{eq: mainconj}
\sum_{u\in H}\sgn u\ P_{\duble xu,\duble w}=q^{{m\choose 2}(\ell(w)-\ell(x))}\tilde P^{(m)}_{x,w}.
\end{equation}
Then
\begin{enumerate}
\item $\tilde P^{(m)}_{x,w}$ is a polynomial (rather than a Laurent polynomial).
\item $\tilde P^{(m)}_{x,w}(0)=1$.
\item $\tilde P^{(m)}_{x,w}=\tilde P^{(m)}_{xs,w}$ for any simple reflection $s$ of $S_n$ such that $ws<w$.
\item $\deg\tilde P^{(m)}_{x,w}=m\deg P_{x,w}$. In particular, $\tilde P^{(m)}_{x,w}=1$ if and only if $P_{x,w}=1$.
\end{enumerate}
\end{conjecture}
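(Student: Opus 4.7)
My plan is to interpret the left-hand side of (\ref{eq: mainconj}) as a parabolic Kazhdan--Lusztig polynomial of Deodhar's anti-spherical (sign) type and to deduce the four assertions from this identification together with the special structure of the lifts $\duble w\in N$.

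First I would observe that $\duble x$ preserves the natural order within each block $\{m(i-1)+1,\dots,mi\}$, so $\duble x s > \duble x$ for every simple reflection $s$ of $H$. Consequently $\duble x$ is the minimal-length representative of the coset $\duble x H\subset S_{mn}$, and the alternating sum in (\ref{eq: mainconj}) is exactly Deodhar's parabolic Kazhdan--Lusztig polynomial attached to the pair $(\duble x,\duble w)$ and the sign representation of $H$. This already gives that the sum is a polynomial in $q$, so part (1) reduces to divisibility by $q^{\binom{m}{2}(\ell(w)-\ell(x))}$; it also equips us with Deodhar's parabolic recursion relative to any simple reflection $s'$ of $S_{mn}$ satisfying $\duble w s' < \duble w$.

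For parts (1) and (2) I would induct on $\ell(w)-\ell(x)$, the trivial base being $x=w$ where only $u=1$ contributes and $P_{\duble w,\duble w}=1$. The inductive step would invoke Deodhar's parabolic recursion, with the goal of verifying that --- thanks to the block-preserving nature of $\duble w$ and the exact additivity $\ell(\duble{ws})=\ell(\duble w)-m^2$ when $ws<w$ --- the correction terms are divisible by $q^{\binom{m}{2}}$ to the appropriate power and contribute $0$ to the constant term. For part (3), the invariance $\tilde P^{(m)}_{xs,w}=\tilde P^{(m)}_{x,w}$ when $ws<w$, I would lean on the Hecke algebra identity $C'_{\duble w}T_{\duble s}=q^{m^2}C'_{\duble w}$, which holds precisely because the lengths subtract exactly, $\ell(\duble w\duble s)=\ell(\duble w)-m^2$. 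Reading off the coefficient of $T_{\duble xu}$ on both sides after expanding $T_{\duble s}$ in the $T$-basis, and alternating-summing over $u\in H$, should convert this Hecke-algebra identity into the claimed invariance, provided one controls the crossing terms between $\duble s$ and the non-normal factors from $H$.

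The main obstacle I anticipate is part (4): the sharp degree equality $\deg\tilde P^{(m)}_{x,w}=m\deg P_{x,w}$. The upper bound demands cancellation strictly sharper than Deodhar's general degree estimate for parabolic polynomials, while the lower bound --- and in particular its consequence $\tilde P^{(m)}_{x,w}=1$ iff $P_{x,w}=1$ --- requires isolating a specific $u\in H$ whose contribution realises the top coefficient and verifying it does not cancel. Here I expect one must combine positivity results in the anti-spherical Hecke module (in the spirit of Libedinsky--Williamson) with the pattern-avoidance characterization of rationally smooth Schubert varieties in type $A$ (Lakshmibai--Sandhya), together with a careful tracking of the leading coefficients of $P_{\duble x u,\duble w}$ as $u$ varies over $H$.
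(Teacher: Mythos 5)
There is a fundamental problem: the statement you are addressing is a \emph{conjecture}, and the paper does not prove it. The paper offers only numerical verification for $nm\le 12$ and a proof in one very special case ($m=2$ and $w$ a Boolean permutation, i.e.\ a Coxeter element of a parabolic subgroup, where $\duble w$ is tight and Deodhar's defect formula applies; see Proposition \ref{prop: mainprop} and Corollary \ref{cor: maincor}, which give $\tilde P^{(2)}_{x,w}=1$ there). Your text is likewise not a proof but a research plan: the identification of the left-hand side of \eqref{eq: mainconj} with Deodhar's antispherical parabolic Kazhdan--Lusztig polynomial (hence polynomiality and non-negativity of coefficients) is correct, but it is exactly the known background recorded in Remark \ref{rem: basicfacts}(1). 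Beyond that, for parts (1) and (2) you only state the \emph{goal} of the inductive step (divisibility of the correction terms by the right power of $q^{\binom m2}$ and vanishing of their constant terms) without carrying it out, and for part (4) you explicitly defer the argument to an unspecified combination of positivity and pattern-avoidance inputs. The paper itself stresses that it has no approach to the general case and no theoretical evidence at all for part (4), so these are genuine open gaps, not routine verifications.

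Moreover, the one concrete identity you do invoke, $C'_{\duble w}T_{\duble s}=q^{m^2}C'_{\duble w}$ for part (3), is not justified by the length additivity $\ell(\duble w\duble s)=\ell(\duble w)-m^2$. The relation $C'_vT_s=qC'_v$ holds only for a \emph{simple} reflection $s$ that is a right descent of $v$; to iterate it along a reduced word of $\duble s$ you would need every letter of that word to be a right descent of $\duble w$, which fails: already for $m=2$ one has $\duble w(2i-1)=2w(i)-1<2w(i)=\duble w(2i)$, so $s_{2i-1}$ is never a right descent of $\duble w$, and $C'_{\duble w}T_{\duble s}$ is in general a combination of several $C'_y$'s rather than a multiple of $C'_{\duble w}$. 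This is consistent with Remark \ref{rem: basicfacts}(5), where it is pointed out that part (3) does not appear to be a formal consequence of the $m=1$ relation; a purely formal Hecke-algebra manipulation of the kind you propose is therefore unlikely to close it. In short, none of the four assertions is established by your argument, and the statement remains a conjecture except in the special cases treated in the paper.
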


\begin{remark} \label{rem: basicfacts}
\begin{enumerate}
\item The left-hand side of \eqref{eq: mainconj} is an instance of a parabolic Kazhdan--Lusztig polynomial
in the sense of Deodhar \cite{MR916182}. They are known to have non-negative coefficients \cite{MR1901161}
(see also \cite{MR2480718, MR3003920}).\footnote{This is now known for any Coxeter group and a parabolic subgroup thereof by Libedinsky--Williamson \cite{1702.00459}}
Thus, the same holds for $\tilde P^{(m)}_{x,w}$.
In particular, if \eqref{eq: idat1} holds (i.e., if $\tilde P^{(m)}_{x,w}(1)=1$) then the left-hand side of \eqref{eq: mainconj} is a priori a monomial (with coefficient $1$),
and the conjecture would say in this case that its degree is ${m\choose 2}(\ell(w)-\ell(x))$.
\item Conjecture \ref{conj: main} is trivially true for $m=1$ (in which case $\tilde P^{(m)}_{x,w}=P_{x,w}$),
or if $w=x$ (in which case $\tilde P^{(m)}_{x,w}=P_{x,w}=1$).
\item The summands on the left-hand side of \eqref{eq: mainconj} are $0$ unless $x\le w$.
\item If $w=w_1\oplus w_2$ (direct sum of permutations) then $x=x_1\oplus x_2$ with $x_i\le w_i$, $i=1,2$
and $\tilde P^{(m)}_{x,w}=\tilde P^{(m)}_{x_1,w_1}\tilde P^{(m)}_{x_2,w_2}$.
Thus, in Conjecture \ref{conj: main} we may assume without loss of generality that $w$ is indecomposable (i.e.,
does not belong to a proper parabolic subgroup of $S_n$).
\item The relations $\tilde P^{(m)}_{x,w}=\tilde P^{(m)}_{x^{-1},w^{-1}}=
\tilde P^{(m)}_{w_0xw_0,w_0ww_0}$ for the longest $w_0\in S_n$ are immediate from the definition and the analogous relations for $m=1$.
It is also not difficult to see that, just as in the case $m=1$, we have $\tilde P^{(m)}_{xs,ws}=\tilde P^{(m)}_{x,w}$
(and more precisely, $P_{u\tilde x\tilde s,\tilde w\tilde s}=P_{u\tilde x,\tilde w}$ for every $u\in H$)
for any $x,w\in S_n$ and a simple reflection $s$ such that $xs<x\not< ws<w$.
On the other hand, the third part of the conjecture does not seem to be a formal consequence of the analogous relation for $m=1$.
\item \label{rem: cancelable}
An index $i=1,\dots,n$ is called cancelable for $(w,x)$ if $w(i)=x(i)$ and $\#\{j<i:x(j)<x(i)\}=\#\{j<i:w(j)<w(i)\}$.
It is known that in this case $P_{x,w}=P_{x^i,w^i}$ where $w^i=\Delta_{w(i)}^{-1}\circ w\circ\Delta_i,x^i=\Delta_{x(i)}^{-1}\circ x\circ\Delta_i\in S_{n-1}$ and
$\Delta_j:\{1,\dots,n-1\}\rightarrow\{1,\dots,n\}\setminus\{j\}$ is the monotone bijection (see \cite{MR1990570, MR2320806}).
Clearly, if $i$ is cancelable for $(w,x)$ then $mi,mi-1,\dots,m(i-1)+1$ are cancelable for $(\tilde w,\tilde x)$,
and hence it is easy to see from the definition that $\tilde P^{(m)}_{x,w}=\tilde P^{(m)}_{x^i,w^i}$.
\item For $n=2$ (and any $m$) Conjecture \ref{conj: main} is a special case of a result of Brenti \cite{MR1972246}.
(See also \cite{MR3558217}.)
\end{enumerate}
\end{remark}

We verified the conjecture numerically for the cases where $nm\le12$.
In the appendix we provide all non-trivial $\tilde P^{(m)}_{x,w}$ in these cases.
In general, already for $m=2$, $\tilde P^{(m)}_{x,w}$ does not depend only on $P_{x,w}$.
Nevertheless, there seems to be some correlation between $\tilde P^{(2)}_{x,w}$ and $((P_{x,w})^2+P_{x,w}(q^2))/2$.

The purpose of this paper is to provide modest theoretical evidence, or a sanity check, for Conjecture \ref{conj: main}
in the case $m=2$.
Namely, we prove it in the very special case that $w$ is any Coxeter element of $S_n$ (or a parabolic subgroup thereof).
Note that $P_{e,w}=1$ in this case and the conjecture predicts that $\tilde P^{(2)}_{x,w}=1$ for any $x\le w$.
Following Deodhar \cite{MR1065215}, the assumption on $w$ guarantees that $P_{u,\duble w}$ admits a simple combinatorial formula for any $u\in S_{2n}$.
(This is a special case of a result of Billey--Warrington \cite{MR1826948} but the case at hand is much simpler.)
Thus, the problem becomes elementary. (For an analogous result in a different setup see \cite{MR3159260}.)

In principle, the method can also be used to prove Conjecture \ref{conj: main} for $m=3$ in the case where $w$
is the right or left cyclic shift. However, we will not carry this out here. Unfortunately, for $m>3$ the method is not applicable
for any $w\ne e$ (again, by the aforementioned result of \cite{MR1826948}).

In the general case, for instance if $w$ is the longest Weyl element, we are unaware of a simple combinatorial formula
for the individual Kazhdan--Lusztig polynomials $P_{u,\duble w}$, even for $m=2$.
Thus, Conjecture \ref{conj: main} becomes more challenging
and at the moment we do not have any concrete approach to attack it beyond the cases described above.
In particular, we do not have any theoretical result supporting the last part of the conjecture, which rests on thin air.

We mention that the relation \eqref{eq: idat1} admits the following generalization.
Suppose that $v,w\in S_n$, $P_{v,w}=1$ and $v$ is $(213)$-avoiding. Then
\begin{equation} \label{eq: KLid}
\sum_{u\in HxH}\sgn u\ P_{u,\duble w}(1)=\sum_{u\in HxH\cap K}\sgn u
\end{equation}
for any $x\in S_{2n}$ such that $\duble{v}\le x\le\duble w$ (\cite[Theorem 10.11]{1605.08545}, which uses \cite{1710.06115}).
Here $K$ is the subgroup of $S_{mn}$ (isomorphic to $S_n\times\dots\times S_n$, $m$ times) that preserves the congruence classes modulo $m$.
In the case $m=2$, $\sgn$ is constant on $HxH\cap K$ and the cardinality of $HxH\cap K$ is a power of $2$
that can be easily explicated in terms of $x$ (\cite[Lemma 10.6]{1605.08545}).
(For $m>2$ this is no longer the case. For instance, for $m=n$ and a suitable $x$, \eqref{eq: KLid} is $(-1)^{n\choose 2}$ times the difference between
the number of even and odd Latin squares of size $n\times n$.)
In general, already for $m=2$, the assumption that $v$ is $(213)$-avoiding is essential for \eqref{eq: KLid}
(in contrast to \eqref{eq: idat1} if Conjecture \ref{conj: main} holds).
For $m=2$ and Coxeter elements $w$ we give in Corollary \ref{cor: maincor} a simple expression for $\sum_{u\in HxH}\sgn u\ P_{u,\duble w}$
for any $x\in S_{2n}$. However, at the moment we do not know how to extend it, even conjecturally, to other $w$'s such that $P_{e,w}=1$.

It would be interesting to know whether Conjecture \ref{conj: main} admits a representation-theoretic interpretation.

Normalizers of parabolic subgroups of Coxeter groups were studied in \cite{MR0453885, MR576184, MR1654763, MR1688445}.
In particular, they are the semidirect product of the parabolic subgroup by a
complementary subgroup, which in certain cases is a Coxeter groups by itself.
It is natural to ask whether Conjecture \ref{conj: main} extends to other classes of parabolic Kazhdan--Lusztig
polynomials with respect to (certain) pairs of elements of the normalizer
(e.g., as in the setup of \cite[\S25.1]{MR1974442}).
Note that already for the Weyl group of type $B_2$ these parabolic Kazhdan--Lusztig
polynomials may vanish, so that a straightforward generalization of Conjecture \ref{conj: main} is too naive.
Nonetheless, recent results of Brenti, Mongelli and Sentinelli \cite{MR3497995}
(albeit rather special) may suggest that \emph{some} generalization (which is yet to be formulated) is not hopeless.
Perhaps there is even a deeper relationship between parabolic Kazhdan--Lusztig polynomials pertaining to different data (including ordinary ones).
(See \cite{MR3166061} for another result in this direction.) At the moment it is not clear what is the scope of such
a hypothetical relationship.

Another natural and equally important question, to which I do not have an answer, is whether the
geometric interpretations of the parabolic Kazhdan--Lusztig polynomials \cite{MR1901161, MR2480718, MR3003920, 1702.00459}
shed any light on Conjecture \ref{conj: main} or its possible generalizations.

\subsection*{Acknowledgement}
The author would like to thank Karim Adiprasito, Joseph Bernstein, Sara Billey, David Kazhdan, George Lusztig,
Greg Warrington. Geordie Williamson and Zhiwei Yun for helpful correspondence.
We also thank the referee for useful suggestions.

\section{A result of Deodhar}

\subsection{}
For this subsection only let $G$ be an algebraic semisimple group over $\C$ of rank $r$, $B$ a Borel subgroup of $G$
and $T$ a maximal torus contained in $B$.
We enumerate the simple roots as $\alpha_1,\dots,\alpha_r$, the corresponding simple reflexions by
$s_1,\dots,s_r$ and the corresponding minimal non-solvable parabolic subgroups by $Q_1,\dots,Q_r$.
Let $W$ be the Weyl group, generated by $s_1,\dots,s_r$.
The group $W$ is endowed with the Bruhat order $\le$, the length function $\ell$, and the sign character $\sgn:W\rightarrow\{\pm1\}$.

We consider words in the alphabet $\s_1,\s_2,\dots,\s_r$.
For any word $\wordd=\s_{j_1}\dots\s_{j_l}$ we write $\pi(\wordd)=s_{j_1}\dots s_{j_l}$ for the corresponding element in $W$
and $\supp(\wordd)=\{\alpha_{j_1},\dots,\alpha_{j_k}\}$.
We say that $\wordd$ is supported in $A$ if $A\supset\supp(\wordd)$.
We also write $\wordd^\rev$ for the reversed word $\s_{j_l}\dots\s_{j_1}$.

Let $\wordd=\s_{j_1}\dots\s_{j_l}$ be a reduced decomposition for $w\in W$ where $l=\ell(w)$.
The reversed word $\wordd^\rev$ is a reduced decomposition for $w^{-1}$.
Note that $\supp(\wordd)=\{\alpha_i:s_i\le w\}$ and in particular, $\supp(\wordd)$ depends only on $w$.
A $\wordd$-mask is simply a sequence of $l$ zeros and ones, i.e., an element of $\{0,1\}^l$.
For a $\wordd$-mask $\maskx\in\{0,1\}^l$ and $i=0,\dots,l$ we write $\wordd^{(i)}[\maskx]$
for the subword of $\wordd$ composed of the letters $\s_{j_k}$ for $k=1,\dots,i$ with $\maskx_k=1$.
For $i=l$ we simply write
\[
\wordd[\maskx]=\wordd^{(l)}[\maskx].
\]
Let
\begin{equation} \label{def: defect}
\dfcts_{\wordd}(\maskx)=\{i=1,\dots,l:\pi(\wordd^{(i-1)}[\maskx])(\alpha_{j_i})<0\},\ \ \dfct_{\wordd}(\maskx)=\card{\dfcts_{\wordd}(\maskx)}
\end{equation}
(the \emph{defect set} and \emph{defect statistics} of $\maskx$) where $\card{\cdot}$ denotes the cardinality of a set.
We also write
\begin{equation} \label{def: defect0}
\dfcts_{\wordd}^r(\maskx)=\{i\in\dfcts_{\wordd}(\maskx):\maskx_i=r\},\
\ndfcts_{\wordd}^r(\maskx)=\{i\notin\dfcts_{\wordd}(\maskx):\maskx_i=r\},\ r=0,1.
\end{equation}
Note that $\ell(\wordd[\maskx])=\card{\ndfcts_{\wordd}^1(\maskx)}-\card{\dfcts_{\wordd}^1(\maskx)}$ for any $\maskx$.
We say that $\maskx$ is full if $\maskx_i=1$ for all $i$.

For later use we also set
\[
\sgn\maskx=\prod_{i=1}^l(-1)^{\maskx_i}\in\{\pm1\}
\]
so that $\sgn\wordd[\maskx]=\sgn\maskx$.

It is well known that
\begin{multline} \label{eq: bint}
\{\pi(\wordd[\maskx]):\maskx\in\{0,1\}^l\}=
\{\pi(\wordd[\maskx]):\maskx\in\{0,1\}^l\text{ and }\wordd[\maskx]\text{ is reduced}\}\\=\{u\in W:u\le w\}.
\end{multline}

For any $u\in W$ define the polynomial
\[
P^{\wordd}_u=\sum_{\maskx\in\{0,1\}^{\ell(w)}:\pi(\wordd[\maskx])=u}q^{\dfct_{\wordd}(\maskx)}.
\]

Let
\[
\phi_{\wordd}:(Q_{j_1}\times \dots\times Q_{j_l})/B^{l-1}\rightarrow\overline{BwB},\ \ \
(q_1,\dots,q_l)\mapsto q_1\dots q_l
\]
be (essentially) the Bott--Samelson resolution  \cite{MR0354697} where $B^{l-1}$ acts by
$(q_1,\dots,q_l)\cdot (b_1,\dots,b_{l-1})=(q_1b_1,b_1^{-1}q_2b_2,\dots,b_{l-2}^{-1}q_{l-1}b_{l-1},b_{l-1}^{-1}q_l)$.

\begin{remark}
\begin{enumerate}
\item It is easy to see that $P^{\wordd}_u$ has constant term $1$ if $u\le w$ (cf.~top of p.~161 in \cite{MR3027577}).
\item It follows from the Bia\l ynicki-Birula decomposition that $P^{\wordd}_u$ is the Poincare polynomial for
$\phi_{\wordd}^{-1}(BuB)$ (cf.~\cite[Proposotion 3.9]{MR1065215}, \cite[Proposition 5.12]{MR3027577}).
In particular, since the diagram
\[
\begin{CD}
(Q_{j_1}\times \dots\times Q_{j_l})/B^{l-1} @>{\phi_{\wordd}}>>  \overline{BwB}\\
@VV{(q_1,\dots,q_l)\mapsto(q_l^{-1},\dots,q_1^{-1})}V  @VV{g\mapsto g^{-1}}V\\
(Q_{j_l}\times \dots\times Q_{j_1})/B^{l-1} @>{\phi_{\wordd^\rev}}>> \overline{Bw^{-1}B}
\end{CD}
\]
is commutative we have $P^{\wordd^\rev}_{u^{-1}}=P^{\wordd}_u$, a fact which is not immediately clear from the definition since
in general $\dfct_{\wordd}(\maskx)\ne\dfct_{\wordd^\rev}(\maskx^\rev)$ where $\maskx^\rev$ denotes the reversed $\wordd^\rev$-mask $\maskx^\rev_i=\maskx_{l+1-i}$.

\item In general, $P_u^\wordd$ heavily depends on the choice of $\wordd$ unless $w$ has the property that all its reduced decompositions
are obtained from one another by repeatedly interchanging adjacent commuting simple reflexions, i.e., $w$ is fully commutative.
\end{enumerate}
\end{remark}

For $u,w\in W$ we denote by $P_{u,w}$ the Kazhdan--Lusztig polynomial with respect to $W$ \cite{MR560412}.
In particular, $P_{u,w}=0$ unless $u\le w$, in which case $P_{u,w}(0)=1$ and all coefficients of $P_{u,w}$ are non-negative.
(This holds in fact for any Coxeter group by a recent result of Elias-Williamson \cite{MR3245013}.)
We also have $P_{w,w}=1$, $\deg P_{u,w}\le\frac12(\ell(w)-\ell(u)-1)$ for any $u<w$ and
\begin{equation} \label{eq: Pinv}
P_{u^{-1},w^{-1}}=P_{u,w}.
\end{equation}
In general, even for the symmetric group, it seems that no ``elementary'' manifestly positive combinatorial formula for $P_{u,w}$ is known.
However, implementable combinatorial formulas to compute $P_{u,w}$ do exist
(see \cite{MR2133266} and the references therein).

The following is a consequence of the main result of \cite{MR1065215}.
See \cite{MR1278702, MR1826948, MR3027577} for more details.
\begin{theorem} \cite{MR1065215} \label{thm: Deodhar}
Let $\wordd$ be a reduced decomposition for $w\in W$.
Then the following conditions are equivalent.
\begin{enumerate}
\item $\deg P^{\wordd}_u\le\frac12(\ell(w)-\ell(u)-1)$ for any $u<w$.
\item For every non-full $\wordd$-mask $\maskx$ we have $\card{\ndfcts^0_{\wordd}(\maskx)}>\card{\dfcts^0_{\wordd}(\maskx)}$.
\item For every $u\in W$ we have $P_{u,w}=P^{\wordd}_u$.
\item The Bott--Samelson resolution $\phi_{\wordd}$ is small.
\end{enumerate}
In particular, under these conditions
\[
P_{u,w}(1)=\card{\{\maskx\in\{0,1\}^{\ell(w)}:\pi(\wordd[\maskx])=u\}}
\]
for any $u\in W$.
\end{theorem}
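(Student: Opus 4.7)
The plan is to prove the four conditions equivalent through the combinatorial chain $(1) \Leftrightarrow (2) \Leftrightarrow (3)$ together with the geometric equivalence $(3) \Leftrightarrow (4)$, the latter being the main obstacle.

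For $(1) \Leftrightarrow (2)$ I would manipulate defect statistics directly. For any mask $\maskx$ landing at $u = \pi(\wordd[\maskx])$, the identity $\ell(u) = |\ndfcts_\wordd^1(\maskx)| - |\dfcts_\wordd^1(\maskx)|$ noted in the text, combined with $l = |\ndfcts_\wordd^0| + |\ndfcts_\wordd^1| + |\dfcts_\wordd^0| + |\dfcts_\wordd^1|$, yields
\[
l - \ell(u) = |\ndfcts_\wordd^0(\maskx)| + |\dfcts_\wordd^0(\maskx)| + 2|\dfcts_\wordd^1(\maskx)|.
\]
Since $\dfct_\wordd(\maskx) = |\dfcts_\wordd^0(\maskx)| + |\dfcts_\wordd^1(\maskx)|$, the bound $2\dfct_\wordd(\maskx) \le l - \ell(u) - 1$ simplifies exactly to $|\dfcts_\wordd^0(\maskx)| < |\ndfcts_\wordd^0(\maskx)|$. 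Because $P^\wordd_u$ has non-negative coefficients, its degree is the maximum defect over masks landing at $u$. Moreover, a non-full mask cannot land at $w$, since a proper subword of a reduced expression for $w$ has length strictly less than $\ell(w)$. Hence conditions $(1)$ and $(2)$ quantify the same inequality over the same set of masks.

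For $(1) \Rightarrow (3)$ I would work in the Iwahori--Hecke algebra with Kazhdan--Lusztig basis element $C'_w = q^{-\ell(w)/2}\sum_{u \le w} P_{u,w}(q)\,T_u$. A straightforward induction on $l$, using $T_v T_s = T_{vs}$ if $\ell(vs) > \ell(v)$ and $T_v T_s = (q-1)T_v + qT_{vs}$ if $\ell(vs) < \ell(v)$, yields the identity
\[
(T_{s_{j_1}}+1)(T_{s_{j_2}}+1)\cdots(T_{s_{j_l}}+1) = \sum_{\maskx \in \{0,1\}^l} q^{\dfct_\wordd(\maskx)}\,T_{\pi(\wordd[\maskx])},
\]
so that $C'_{s_{j_1}}\cdots C'_{s_{j_l}} = q^{-l/2}\sum_u P^\wordd_u(q)\,T_u$. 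This product is bar-invariant, being a product of bar-invariant factors. Under $(1)$, the polynomials $P^\wordd_u$ for $u < w$ satisfy $\deg P^\wordd_u \le (\ell(w) - \ell(u) - 1)/2$, while $P^\wordd_w = 1$; these are the very properties characterizing the Kazhdan--Lusztig basis element. By uniqueness of the KL basis, $C'_{s_{j_1}}\cdots C'_{s_{j_l}} = C'_w$, and comparing coefficients of $T_u$ gives $P^\wordd_u = P_{u,w}$ for all $u$. The converse $(3) \Rightarrow (1)$ is the standard degree bound on Kazhdan--Lusztig polynomials.

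The equivalence $(3) \Leftrightarrow (4)$ is the deepest step and rests on the decomposition theorem applied to the proper morphism $\phi_\wordd$ from the smooth Bott--Samelson variety. By the Bia\l ynicki--Birula decomposition recalled in the text, $P^\wordd_u$ is the Poincar\'e polynomial of $\phi_\wordd^{-1}(BuB)$, equivalently the Poincar\'e polynomial of the stalk of $R\phi_{\wordd*}\underline{\mathbb{Q}}[\ell(w)]$ along $BuB$. Smallness of $\phi_\wordd$ is equivalent to $R\phi_{\wordd*}\underline{\mathbb{Q}}[\ell(w)] \cong \mathrm{IC}(\overline{BwB})$ with no additional summands supported on smaller Schubert cells, and by the sheaf-theoretic interpretation of Kazhdan--Lusztig, the stalk Poincar\'e polynomial of $\mathrm{IC}(\overline{BwB})$ along $BuB$ is exactly $P_{u,w}$. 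This comparison is what makes $(3) \Leftrightarrow (4)$ non-combinatorial and represents the main obstacle. The concluding identity $P_{u,w}(1) = |\{\maskx : \pi(\wordd[\maskx]) = u\}|$ is then immediate from $(3)$ by specializing $q = 1$ in the definition of $P^\wordd_u$.
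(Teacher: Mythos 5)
The paper does not prove this theorem at all: it is quoted from Deodhar \cite{MR1065215} (with \cite{MR1278702, MR1826948, MR3027577} for details), so there is no internal proof to compare against. Your argument is essentially the standard one from those sources and is correct in outline. The $(1)\Leftrightarrow(2)$ bookkeeping with $\ell(u)=\card{\ndfcts^1_\wordd(\maskx)}-\card{\dfcts^1_\wordd(\maskx)}$ is right, as is the observation that the non-full masks are exactly the masks landing at $u<w$; the displayed identity $(T_{s_{j_1}}+1)\cdots(T_{s_{j_l}}+1)=\sum_\maskx q^{\dfct_\wordd(\maskx)}T_{\pi(\wordd[\maskx])}$ is Deodhar's and your induction via the quadratic relation is the standard verification, after which bar-invariance of the product together with $P^\wordd_w=1$ and the degree bounds of $(1)$ pins down $C'_w$ by uniqueness of the Kazhdan--Lusztig basis. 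Two small points could be tightened in the geometric step. First, the cleanest route to $(4)$ is $(1)\Leftrightarrow(4)$ directly: since the fiber over a point of $BuB$ has a cell decomposition by masks with cell dimension equal to the defect, $\deg P^\wordd_u=\dim\phi_\wordd^{-1}(x_u)$, and smallness is precisely the strict inequality $2\dim\phi_\wordd^{-1}(x_u)<\ell(w)-\ell(u)$ for all $u<w$; this avoids invoking the decomposition theorem for that equivalence. Second, if you do argue $(3)\Rightarrow(4)$ as you wrote, the assertion that equality of stalk Poincar\'e polynomials forces the absence of extra summands needs the remark that both the fiber cohomology and the IC stalks are concentrated in even degrees, so contributions of additional summands are non-negative and cannot cancel. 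With these caveats the proposal is a faithful reconstruction of the cited proof.
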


Following Lusztig \cite{MR1261904} and Fan--Green \cite{MR1441960} we say that $w\in W$ is tight
if it satisfies the conditions of Theorem \ref{thm: Deodhar}. (In fact, this condition is independent of the choice of $\wordd$.)

\section{Certain classes of permutations}
From now on we specialize to the symmetric group $S_n$ on $n$ letters, $n\ge1$.
We enumerate $\alpha_1,\dots,\alpha_{n-1}$ in the usual way.
Thus, $w\alpha_i>0$ if and only if $w(i)<w(i+1)$.
We normally write elements $w$ of $S_n$ as $(w(1)\, \dots\,w(n))$.

Given $x\in S_m$ and $w\in S_n$ we say that $w$ \emph{avoids} $x$
if there do not exist indices $1\le i_1<\dots<i_m\le n$ such that
\[
\forall 1\le j_1<j_2\le m\ w(i_{j_1})<w(i_{j_2})\iff x(j_1)<x(j_2).
\]
Equivalently, the $n\times n$-matrix $M_w$ representing $w$ does not admit $M_x$ as a minor.

There is a vast literature on pattern avoidance.
We will only mention two remarkable closely related general facts.
The first is that given $m$, there is an algorithm, due to Guillemot--Marx, to detect whether $w\in S_n$ is $x$-avoiding
whose running time is linear in $n$ \cite{MR3376367}. (If $m$ also varies then the problem is NP-complete \cite{MR1620935}.)
Secondly, if $C_n(x)$ denotes the number of $w\in S_n$ which are $x$-avoiding
then it was shown by Marcos--Tardos that the Stanley--Wilf limit $C(x)=\lim_{n\rightarrow\infty}C_n(x)^{1/n}$
exists and is finite \cite{MR2063960}, and as proved more recently by Fox, it is typically exponential in $m$ \cite{Fox}.

We recall several classes of pattern avoiding permutations.
First, consider the $(321)$-avoiding permutations, namely those for which there do not exist $1\le i<j<k\le n$
such that $w(i)>w(j)>w(k)$.
It is known that $w$ is $(321)$-avoiding if and only if it is fully commutative \cite{MR1241505}.
The number of $(321)$-avoiding permutations in $S_n$ is the Catalan number $C_n={2n\choose n}-{2n\choose n-1}$ -- a well-known result which goes back
at least 100 years ago to MacMahon \cite{MR2417935}.

We say that $w\in S_n$ is \emph{smooth} if it avoids the patterns $(4231)$ and $(3412)$.
By a result of Lakshmibai--Sandhya, $w$ is smooth if and only if the closure $\overline{Bw B}$ of the cell $Bw B$ in $\GL_n(\C)$
(where $B$ is the Borel subgroup of upper triangular matrices) is smooth \cite{MR1051089}.
(A generating function for the number of smooth permutations in $S_n$ in given in \cite{MR2376109}.)
It is also known that $w$ is smooth $\iff P_{e,w}=1\iff P_{u,w}=1$ for all $u\le w$ \cite{MR788771}.

The $(321)$-avoiding smooth permutations (i.e., the $(321)$ and $(3412)$-avoiding permutations) are precisely
the products of distinct simple reflexions  \cite{MR1603806, MR1417303}, i.e., the Coxeter elements of the parabolic subgroups of $S_n$.
They are characterized by the property that the Bruhat interval $\{x\in S_n:x\le w\}$ is a Boolean lattice, namely
the power set of $\{i:s_i\le w\}$ \cite{MR2333139}.
They are therefore called \emph{Boolean permutations}.
The number of Boolean permutations in $S_n$ is $F_{2n-1}$ where $F_m$ is the Fibonacci sequence \cite{MR1603806, MR1417303}.

In \cite{MR1826948} tight permutations were classified by Billey--Warrington in terms of pattern avoidance.
Namely, $w$ is tight if and only if it avoids the following five permutations
\begin{equation} \label{eq: 5perms}
(321)\in S_3, \ (46718235), (46781235), (56718234), (56781234)\in S_8.
\end{equation}
For the counting function of this class of permutations see \cite{MR2043806}.

\begin{remark}
In \cite{MR1354702}, Lascoux gave a simple, manifestly positive combinatorial formula for $P_{u,w}$ in the case where $w$ is $(3412)$-avoiding
(a property also known as \emph{co-vexillary}).
Note that a $(321)$-hexagon-avoiding permutation $w$ is co-vexillary if and only if it is a Boolean permutation, in which case $P_{u,w}=1$
for all $u\le w$.
\end{remark}

\section{The Defect} \label{sec: ddc}
Henceforth (except for Remark \ref{rem: m>2} below) we assume, in the notation of the introduction, that $m=2$.
Recall the group homomorphism
\[
\duble{ }:S_n\rightarrow S_{2n}
\]
given by
\[
\duble w(2i)=2w(i),\ \duble w(2i-1)=2w(i)-1,\ \ i=1,\dots,n.
\]
(Technically, $\duble{ }$ depends on $n$ but the latter will be hopefully clear from the context.)

We will use Theorem \ref{thm: Deodhar} to derive a simple expression for $P_{u,\duble w}$ where $w$ is a Boolean permutation.
(Note that if $e\ne w\in S_n$ then $\duble w$ is not co-vexillary. Thus, Lascoux's formula is not applicable.)

\begin{remark}
It is easy to see that $w$ is Boolean if and only if $\duble w$ satisfies the pattern avoidance conditions of
\cite{MR1826948}. Thus, it follows from [ibid.] that $\duble w$ is tight. However, we will give a self-contained proof of this fact
since this case is much simpler and in any case the ingredients are needed for the evaluation of $P_{u,\duble w}$.
\end{remark}

{\bf For the rest of the paper we fix a Boolean permutation $w\in S_n$ and a reduced decomposition $\wordd=\s_{j_1}\dots\s_{j_l}$ for $w$}
where $l=\ell(w)$ and $j_1,\dots,j_l\in\{1,\dots,n-1\}$ are distinct. The choice of $\wordd$ plays little role
and will often be suppressed from the notation.

For any $x\in S_n$ let
\begin{equation} \label{eq: Itau}
I_x=\{i:s_{j_i}\le x\}.
\end{equation}
Then
\begin{equation} \label{eq: booleanmap}
x\mapsto I_x\text{ is a bijection between }\{x\in S_n:x\le w\}\text{ and }\power(\{1,\dots,l\})
\end{equation}
where $\power$ denotes the power set.

A key role is played by the following simple combinatorial objects.
\begin{definition}
Let $A$ and $B$ be subsets of $\{1,\dots,l\}$ with $A\subset B$.
\begin{enumerate}
\item The right (resp., left) neighbor set $\rshft{N}_A^B=\,^{\wordd}\rshft{N}_A^B$ (resp., $\lshft{N}_A^B=\,^{\wordd}\lshft{N}_A^B$) of $A$
in $B$ with respect to $\wordd$ consists of the elements $i\in B\setminus A$ for which
there exists $t>0$ and indices $i_1,\dots,i_t$, necessarily unique, such that $i<i_1<\dots<i_t$
(resp., $i>i_1>\dots>i_t$), $\{i_1,\dots,i_{t-1}\}\subset A$, $i_t\in B\setminus A$ and $j_{i_k}=j_i+k$ for $k=1,\dots,t$.
\item The neighbor set of $A$ in $B$ with respect to $\wordd$ is $N_A^B=\rshft{N}_A^B\cup\lshft{N}_A^B$.
\item The neighboring function $\nu_A^B=\,^{\wordd}\nu_A^B:N_A^B\rightarrow B\setminus A$ is given by the rule $i\mapsto i_t$.
\end{enumerate}
\end{definition}
Note that the sets $\rshft N_A^B$ and $\lshft N_A^B$ are disjoint
and that $\nu_A^B$ is injective. Moreover, if $i\in N_A^B$ then $\nu_A^B(i)>i$ if and only if $i\in\rshft N_A^B$.

If $B=\{1,\dots,l\}$ then we suppress $B$ from the notation.
Note that
\begin{equation} \label{eq: NAB}
N_A^B=N_A\cap B\cap\nu_A^{-1}(B)\text{ and }\nu_A\rest_{N_A^B}=\nu_A^B.
\end{equation}

We have $\ell(\duble w)=4l$ and a reduced decomposition for $\duble w$ is given by
\[
\Dwordd=\s_{2j_1}\s_{2j_1-1}\s_{2j_1+1}\s_{2j_1}\dots\s_{2j_l}\s_{2j_l-1}\s_{2j_l+1}\s_{2j_l}.
\]
It will be convenient to write $\Dwordd$-masks as elements of $(\{0,1\}^4)^l$.
Thus, if $\maskx$ is a $\Dwordd$-mask then $\maskx_i\in\{0,1\}^4$, $i=1,\dots,l$ and we write $\maskx_{i,k}$, $k=1,2,3,4$
for the coordinates of $\maskx_i$.
By convention, we write for instance $\maskx_i=(*,1,*,0)$, to mean that $\maskx_{i,2}=1$ and $\maskx_{i,4}=0$, without restrictions
on $\maskx_{i,1}$ or $\maskx_{i,3}$.

For the rest of the section we fix a $\Dwordd$-mask $\maskx\in (\{0,1\}^4)^l$ and let
\[
I_f=\{i\in\{1,\dots,l\}:\maskx_i=(1,1,1,1)\}.
\]
We explicate the defect set $\dfcts_{\Dwordd}(\maskx)$ of $\maskx$ (see \eqref{def: defect}).

\begin{lemma} \label{lem: detdfct}
For any $i=1,\dots,l$ let $C(\maskx,i)$ (resp., $\lshft{C}(\maskx,i)$) be the condition
\[
i\in\lshft N_{I_f}\text{ and either }\maskx_{\nu_{I_f}(i)}=(1,1,0,1)\text{ or }\maskx_{\nu_{I_f}(i)}=(*,1,*,0)
\]
(resp.,
\[
\exists r\in\rshft N_{I_f}\text{ such that }\nu_{I_f}(r)=i\text{ and either }\maskx_r=(1,0,1,1)\text{ or }\maskx_r=(*,*,1,0)).
\]
Then for all $i=1,\dots,l$ we have
\begin{enumerate}
\item $\pi(\Dwordd^{(4i-4)}[\maskx])\alpha_{2j_i}>0$.
\item $\pi(\Dwordd^{(4i-3)}[\maskx])\alpha_{2j_i-1}<0$ if and only if $\maskx_i=(0,*,*,*)$ and $\lshft{C}(\maskx,i)$.
\item $\pi(\Dwordd^{(4i-2)}[\maskx])\alpha_{2j_i+1}<0$ if and only if $\maskx_i=(0,*,*,*)$ and $C(\maskx,i)$.
\item $\pi(\Dwordd^{(4i-1)}[\maskx])\alpha_{2j_i}<0$ if and only if (exactly) one of the following conditions is satisfied
\begin{gather*}
\maskx_i=(1,0,0,*),\\
\maskx_i=(1,0,1,*)\text{ and }C(\maskx,i),\\
\maskx_i=(1,1,0,*)\text{ and }\lshft C(\maskx,i).
\end{gather*}
\end{enumerate}
\end{lemma}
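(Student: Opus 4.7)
The plan is to carry out a direct, inductive analysis of how the partial products $\pi(\Dwordd^{(4i-4+r)}[\maskx])$ for $r=0,1,2,3$ act on the three relevant simple roots $\alpha_{2j_i-1},\alpha_{2j_i},\alpha_{2j_i+1}$, and to read off each of the four assertions from the resulting signs. The Boolean hypothesis on $w$ is crucial: since $j_1,\dots,j_l$ are distinct, the only letters preceding block $i$ in $\Dwordd$ that do not commute with the triple $s_{2j_i-1},s_{2j_i},s_{2j_i+1}$ come from blocks $k<i$ with $j_k\in\{j_i-1,j_i+1\}$, so the analysis reduces to tracking the contributions of such nearby blocks.

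I would then compute explicitly what a single neighboring block does. For instance, if $j_k=j_i-1$ and $\maskx_k=(1,1,1,1)$, a direct calculation shows that the full block product $s_{2j_k}s_{2j_k-1}s_{2j_k+1}s_{2j_k}$ transports $\alpha_{2j_i-1}$ to $\alpha_{2j_i-3}$ while fixing $\alpha_{2j_i}$ and $\alpha_{2j_i+1}$. If instead $\maskx_k$ is not full, case analysis reveals that the effect on $\alpha_{2j_i-1}$ is either trivial or else produces a negative root, and the precise non-full patterns that ``flip the sign'' are exactly $(1,0,1,1)$ and $(*,*,1,0)$ appearing in the condition $\lshft C$. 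Iterating along a chain of successive full blocks terminated by a non-full one --- which is, up to symmetry of the two sides, either the structure $\rshft N_{I_f}$ (for $\alpha_{2j_i-1}$) or $\lshft N_{I_f}$ (for $\alpha_{2j_i+1}$) --- yields the complete description of when $\alpha_{2j_i\mp 1}$ becomes negative at the moment block $i$ starts acting, matching the conditions $\lshft C$ and $C$ respectively.

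With this dictionary, part~(1) is immediate: $\alpha_{2j_i}$ is never touched by any preceding letter, because $s_{2j_i}$ itself does not appear earlier (distinctness of the $j_k$) and the neighboring reflections fix $\alpha_{2j_i}$ while shifting $\alpha_{2j_i\pm 1}$. Parts~(2) and~(3) then follow by combining the entering sign of $\alpha_{2j_i\mp 1}$ with the effect of the first letter $s_{2j_i}^{\maskx_{i,1}}$ of block $i$, noting that when $\maskx_{i,1}=1$ the root becomes $\alpha_{2j_i-1}+\alpha_{2j_i}$ (resp.\ $\alpha_{2j_i}+\alpha_{2j_i+1}$), whose image remains positive thanks to part~(1); this forces the defect to appear only when $\maskx_{i,1}=0$. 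Part~(4) will require the most intricate bookkeeping: I will track the image of $\alpha_{2j_i}$ through the cumulative action of $s_{2j_i}^{\maskx_{i,1}}s_{2j_i-1}^{\maskx_{i,2}}s_{2j_i+1}^{\maskx_{i,3}}$, whose outcome depends on the entering signs of all three roots and may equal $\alpha_{2j_i}$, a sum of two or three positive roots, or already a negative root, depending on $C$ and $\lshft C$. The main obstacle is precisely this last case analysis: one must verify that the three listed sub-cases are exhaustive and that the simultaneous interaction of the left and right neighbor chains is handled without omission or double-counting.
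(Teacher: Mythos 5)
Your strategy coincides with the paper's: track the images of $\alpha_{2j_i-1},\alpha_{2j_i},\alpha_{2j_i+1}$ under the partial products, use distinctness of the $j_k$ to reduce to the blocks with $j_k=j_i\pm1$, iterate through a chain of full blocks terminated by a non-full one (which is exactly the structure encoded by $\rshft N_{I_f}$, $\lshft N_{I_f}$ and $\nu_{I_f}$), and read off the conditions $\lshft C$, $C$ from the mask of the terminal block; your identification of the sign-flipping patterns $(1,0,1,1)$ and $(*,*,1,0)$ is correct. However, two points are genuine problems as written. First, your positivity arguments are not valid in the form you state them: a full neighboring block does \emph{not} fix $\alpha_{2j_i}$ (for $j_k=j_i-1$ it sends $\alpha_{2j_i}$ to $\alpha_{2j_i-2}+\alpha_{2j_i-1}+\alpha_{2j_i}$), and positivity of $\pi(\Dwordd^{(4i-4)}[\maskx])\alpha_{2j_i}$ (part (1)) does not by itself imply positivity of $\pi(\Dwordd^{(4i-4)}[\maskx])(\alpha_{2j_i-1}+\alpha_{2j_i})$ -- e.g.\ $u=s_{2j_i}s_{2j_i-1}$ sends $\alpha_{2j_i}$ to a positive root but $\alpha_{2j_i-1}+\alpha_{2j_i}$ to a negative one. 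The argument that actually works, and which the paper uses repeatedly, is the support argument: $\alpha_{2j_i}\notin\supp(\Dwordd^{(4i-4)}[\maskx])$ because $\s_{2j_i}$ occurs only in block $i$, so the preceding partial product lies in the parabolic subgroup omitting $s_{2j_i}$, hence preserves the $\alpha_{2j_i}$-coefficient of $\alpha_{2j_i}$, $\alpha_{2j_i-1}+\alpha_{2j_i}$, $\alpha_{2j_i}+\alpha_{2j_i+1}$ and $\alpha_{2j_i-1}+\alpha_{2j_i}+\alpha_{2j_i+1}$, forcing all these images to be positive.

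Second, part (4), which you defer to ``intricate bookkeeping'' and flag as the main obstacle, is left unproved, and your worry about a simultaneous interaction of the left and right chains does not materialize once one splits correctly. With $\maskx_{i,1}=0$ the root stays positive by the support argument; with $\maskx_{i,1}=1$ one splits on $(\maskx_{i,2},\maskx_{i,3})$: if both are $1$ the relevant root is $\alpha_{2j_i-1}+\alpha_{2j_i}+\alpha_{2j_i+1}$, positive as above; if both are $0$ the image is $-\pi(\Dwordd^{(4i-4)}[\maskx])\alpha_{2j_i}$, negative by part (1); and in the mixed cases one has $\pi(\Dwordd^{(4i-1)}[\maskx])\alpha_{2j_i}=\pi(\Dwordd^{(4i-3)}[\maskx'])\alpha_{2j_i-1}$ (resp.\ $\pi(\Dwordd^{(4i-2)}[\maskx'])\alpha_{2j_i+1}$), where $\maskx'$ agrees with $\maskx$ except that $\maskx'_{i,1}=0$, so these reduce verbatim to parts (2) and (3) and yield the conditions $\lshft C(\maskx,i)$ and $C(\maskx,i)$; in each case only one chain intervenes. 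Supplying the coefficient argument and this four-way split would close the gaps and essentially reproduce the paper's proof.
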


\begin{proof}
\begin{enumerate}
\item This is clear since $\alpha_{2j_i}\notin\supp(\Dwordd^{(4i-4)}[\maskx])$.
\item If $\maskx_{i,1}=1$ then $\pi(\Dwordd^{(4i-3)}[\maskx])\alpha_{2j_i-1}=\pi(\Dwordd^{(4i-4)}[\maskx])(\alpha_{2j_i-1}+\alpha_{2j_i})$
which as before is a positive root since $\alpha_{2j_i}\notin\supp(\Dwordd^{(4i-4)}[\maskx])$.

Suppose from now on that $\maskx_{i,1}=0$ and let $t\ge0$ be the largest index for which
there exist (unique) indices $i_t<\dots <i_1<i_0=i$ with $\{i_1,\dots,i_t\}\subset I_f$
such that $j_{i_k}=j_i-k$ for $k=1,\dots,t$.
If there does not exist $r<i_t$ such that $j_r=j_i-t-1$ then
$\pi(\Dwordd^{(4i-3)}[\maskx])\alpha_{2j_i-1}=\alpha_{2j_{i_t}-1}>0$.
Otherwise, by the maximality of $t$, we have $r\in\rshft N_{I_f}$, $\nu_{I_f}(r)=i$ and
$\pi(\Dwordd^{(4i-3)}[\maskx])\alpha_{2j_i-1}=\pi(\Dwordd^{(4r)}[\maskx])\alpha_{2j_r+1}$.
We split into cases.
\begin{enumerate}
\item If $\maskx_{r,3}=0$ then $\alpha_{2j_r+1}\notin\supp(\Dwordd^{(4r)}[\maskx])$ and therefore $\pi(\Dwordd^{(4i-3)}[\maskx])\alpha_{2j_i-1}>0$.
\item Assume that $\maskx_{r,3}=1$.
\begin{enumerate}
\item If $\maskx_{r,4}=0$ then
$\pi(\Dwordd^{(4r)}[\maskx])\alpha_{2j_r+1}=-\pi(\Dwordd^{(4r-2)}[\maskx])\alpha_{2j_r+1}<0$
since $\alpha_{2j_r+1}\notin\supp(\Dwordd^{(4r-2)}[\maskx])$.
\item Assume that $\maskx_{r,4}=1$, so that $\pi(\Dwordd^{(4r)}[\maskx])\alpha_{2j_r+1}=
\pi(\Dwordd^{(4r-2)}[\maskx])\alpha_{2j_r}$. The latter is a positive root unless
$\maskx_{r,1}=1$ (since otherwise $\alpha_{2j_r}\notin\supp(\Dwordd^{(4r-2)}[\maskx])$).
If $\maskx_{r,1}=1$ then $\maskx_{r,2}=0$ (since $r\notin I_f$)
and $\pi(\Dwordd^{(4i-3)}[\maskx])\alpha_{2j_i-1}=-\pi(\Dwordd^{(4r-4)}[\maskx])\alpha_{2j_r}<0$
since $\alpha_{2j_r}\notin\supp(\Dwordd^{(4r-4)}[\maskx])$.
\end{enumerate}
\end{enumerate}
\item This is similar to the second part. We omit the details.
\item If $\maskx_{i,1}=0$ then $\pi(\Dwordd^{(4i-1)}[\maskx])\alpha_{2j_i}>0$ since $\alpha_{2j_i}\notin\supp(\Dwordd^{(4i-1)}[\maskx])$.

Assume that $\maskx_{i,1}=1$. We split into cases.
\begin{enumerate}
\item If $\maskx_{i,2}=\maskx_{i,3}=0$ then $\pi(\Dwordd^{(4i-1)}[\maskx])\alpha_{2j_i}=-\pi(\Dwordd^{(4i-4)}[\maskx])\alpha_{2j_i}<0$
since $\alpha_{2j_i}\notin\supp(\Dwordd^{(4i-4)}[\maskx])$.
\item For the same reason, if $\maskx_{i,2}=\maskx_{i,3}=1$ then
$\pi(\Dwordd^{(4i-1)}[\maskx])\alpha_{2j_i}=\pi(\Dwordd^{(4i-4)}[\maskx])(\alpha_{2j_i}+\alpha_{2j_i-1}+\alpha_{2j_i+1})>0$.
\item If $\maskx_{i,2}=1$ and $\maskx_{i,3}=0$ then $\pi(\Dwordd^{(4i-1)}[\maskx])\alpha_{2j_i}=\pi(\Dwordd^{(4i-4)}[\maskx])\alpha_{2j_i-1}=
\pi(\Dwordd^{(4i-3)}[\maskx'])\alpha_{2j_i-1}$ where $\maskx'_j=\maskx_j$ for all $j\ne i$ and $\maskx'_{i,1}=0$.
This case was considered in the second part.
\item Similarly, the case $\maskx_{i,2}=0$ and $\maskx_{i,3}=1$ reduces to the third part.
\end{enumerate}
\end{enumerate}
\end{proof}

For a subset $A\subset\{1,\dots,l\}$ we denote by $A^c$ its complement in $\{1,\dots,l\}$.

\begin{corollary} \label{cor: changed}
Let $i\in(\rshft N_{I_f}\cup\nu_{I_f}(\lshft N_{I_f}))^c$.
\begin{enumerate}
\item Let $\maskx'\in(\{0,1\}^4)^l$ be such that $\maskx'_j=\maskx_j$ for all $j\ne i$ and
either $\maskx'_{i,1}=\maskx_{i,1}=0$ or $\maskx'_{i,1}=\maskx_{i,1}=1$, $\maskx'_{i,2}=\maskx_{i,2}$ and
$\maskx'_{i,3}=\maskx_{i,3}$. Then $\dfct_{\Dwordd}(\maskx')=\dfct_{\Dwordd}(\maskx)$.
\item Assume that $\maskx_{i,1}=1$.
Let $\wordd'$ be the word obtained from $\wordd$ by removing $\s_{j_i}$ and let
$\maskx'$ be the $\duble{\wordd'}$-mask obtained from $\maskx$ by removing $\maskx_i$.
Then
\[
\dfct_{\Dwordd}(\maskx)-\dfct_{\duble{\wordd'}}(\maskx')=\begin{cases}1&\maskx_{i,2}=\maskx_{i,3}=0,\\
\delta_1&\maskx_{i,2}=0,\maskx_{i,3}=1,\\\delta_2&\maskx_{i,2}=1,\maskx_{i,3}=0,\\
0&\maskx_{i,2}=\maskx_{i,3}=1,\end{cases}
\]
where $\delta_1$ (resp., $\delta_2$) is $1$ if $C(\maskx,i)$ (resp., $\lshft C(\maskx,i)$) holds and $0$ otherwise.
\end{enumerate}
\end{corollary}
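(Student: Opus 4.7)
The plan is to apply Lemma~\ref{lem: detdfct} sub-position by sub-position and argue that, under the hypothesis, the changes in defect contributions balance out. The first thing to notice is that the defect contribution at block $i$ itself depends on $\maskx_{i,1}$ (unchanged), on $\maskx_{i,2}, \maskx_{i,3}$ (also unchanged in the second subcase), and on the conditions $C(\maskx, i), \lshft{C}(\maskx, i)$. Both conditions reference only masks at positions other than $i$ and the neighbor structure of $I_f$, so they are unaffected by the modification. Hence block $i$ contributes equally to $\dfct_{\Dwordd}(\maskx)$ and $\dfct_{\Dwordd}(\maskx')$.

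For any block $k \ne i$, the defect contribution depends on $\maskx_k$ (unchanged) and on $C(\maskx, k), \lshft{C}(\maskx, k)$. The key step is to show that the hypothesis $i \in (\rshft{N}_{I_f} \cup \nu_{I_f}(\lshft{N}_{I_f}))^c$ precludes $\maskx_i$ from entering either condition: $C(\maskx, k)$ involves $\maskx_{\nu_{I_f}(k)}$ with $k \in \lshft{N}_{I_f}$, so $\nu_{I_f}(k) = i$ would force $i \in \nu_{I_f}(\lshft{N}_{I_f})$; similarly $\lshft{C}(\maskx, k)$ involves $\maskx_r$ with $r \in \rshft{N}_{I_f}$, so $r = i$ would force $i \in \rshft{N}_{I_f}$. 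Hence, whenever the modification preserves $I_f$, the defect count at every other block is preserved as well, which gives Part 1 immediately outside the critical subcase. The critical subcase—where $\maskx_{i,1}=\maskx_{i,2}=\maskx_{i,3}=1$ and $\maskx_{i,4}\ne\maskx'_{i,4}$, so that $i$ enters or leaves $I_f$—is the main obstacle. Here one must directly verify that the pattern $(1,1,1,0)$ at position $i$ matches both $(*,1,*,0)$ (entering $C$) and $(*,*,1,0)$ (entering $\lshft{C}$) in precisely the way needed: splitting the chain through $i$ creates new endpoints whose newly activated $C$ or $\lshft{C}$ conditions compensate exactly for the ones lost from the original chain, while defects at block $i$ itself remain zero in both states since neither $(1,1,1,1)$ nor $(1,1,1,0)$ matches any case of Part~4 of Lemma~\ref{lem: detdfct}.

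For Part 2, I would argue analogously: removing the letter $\s_{j_i}$ from $\wordd$ leaves the defect counts at blocks $k\ne i$ intact, because the complement hypothesis prevents the conditions at other blocks from referencing the now-absent block $i$, and the neighbor structure of $I_{f'}$ in $\wordd'$ is obtained from that of $I_f$ in $\wordd$ by a local surgery that does not touch the conditions appearing at blocks $k\ne i$. The net difference therefore equals the defect contributed by block $i$ in $\maskx$, which by Part~4 of Lemma~\ref{lem: detdfct} is exactly as claimed: $1$ in the unconditional case $\maskx_i=(1,0,0,*)$, $\delta_1=[C(\maskx,i)]$ for $\maskx_i=(1,0,1,*)$, $\delta_2=[\lshft{C}(\maskx,i)]$ for $\maskx_i=(1,1,0,*)$, and $0$ for $\maskx_i=(1,1,1,*)$. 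The bookkeeping of the chain-splitting step in Part 1 is where all the combinatorial work is concentrated; once that cancellation is in hand, both parts follow.
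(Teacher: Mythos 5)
Your overall route is the intended one: by Lemma \ref{lem: detdfct}, the defect status of a position in block $k$ depends only on $\maskx_k$ and, through $C(\maskx,k)$ and $\lshft C(\maskx,k)$, on the mask at $\nu_{I_f}(k)$ (if $k\in\lshft N_{I_f}$) or at a witness $r\in\rshft N_{I_f}$ with $\nu_{I_f}(r)=k$; the hypothesis $i\notin\rshft N_{I_f}\cup\nu_{I_f}(\lshft N_{I_f})$ ensures no block $k\ne i$ ever quotes $\maskx_i$, and inside block $i$ the conditions never involve $\maskx_{i,4}$ (nor $\maskx_{i,2},\maskx_{i,3}$ when $\maskx_{i,1}=0$). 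That bookkeeping is exactly what the paper leaves implicit. The genuine problem is your ``critical subcase''. The exact compensation you assert there is false. Take $\wordd=\s_1\s_2\s_3$, $\maskx_1=\maskx_3=(0,0,0,0)$, $\maskx_2=(1,1,1,1)$, and $\maskx'$ equal to $\maskx$ except $\maskx'_2=(1,1,1,0)$. Then $I_f=\{2\}$, $\rshft N_{I_f}=\{1\}$ with $\nu_{I_f}(1)=3$, $\lshft N_{I_f}=\emptyset$, so $i=2$ satisfies the hypothesis and the pair $(\maskx,\maskx')$ satisfies the conditions of part 1; yet $\dfct_{\Dwordd}(\maskx)=0$ while $\dfct_{\Dwordd}(\maskx')=1$: at the letter $\s_5$ of the third block the prefix product for $\maskx'$ is $s_4s_3s_5$, which sends $\alpha_5$ to a negative root (equivalently, for $\maskx'$ one has $I_f'=\emptyset$ and $\lshft C(\maskx',3)$ holds because $\maskx'_2=(*,*,1,0)$, whereas for $\maskx$ it fails because $\maskx_1=(0,0,0,0)$). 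So splitting the chain creates a new defect with nothing cancelling it. The correct move is not to prove this case but to exclude it: the statement is meant to be read (and in the paper is only ever applied --- see the involutions and the choice of $i_0\notin I_f$ in the proof of Proposition \ref{prop: mainprop}) with neither $\maskx_i$ nor $\maskx'_i$ equal to $(1,1,1,1)$, so that $I_f$, $N_{I_f}$, $\nu_{I_f}$ are common to both masks; with that understanding your first two paragraphs already finish part 1.

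For part 2 the same caveat applies (one needs $i\notin I_f$; the case $\maskx_i=(1,1,1,1)$ can fail for the same reason), and your phrase about a ``local surgery that does not touch the conditions at $k\ne i$'' is precisely the point that must be verified rather than asserted: after deleting the block, any chain of $\wordd$ meeting position $i$ could only have had $i$ as an endpoint (intermediate members lie in $I_f$); an endpoint of a left chain is ruled out by $i\notin\nu_{I_f}(\lshft N_{I_f})$, a starting point of a right chain by $i\notin\rshft N_{I_f}$, and a right chain \emph{ending} at $i$ is quoted only in $\lshft C(\cdot,i)$, i.e.\ at the deleted block. Hence the neighbor data seen by every $k\ne i$, and therefore its defect status, is unchanged, and the difference $\dfct_{\Dwordd}(\maskx)-\dfct_{\duble{\wordd'}}(\maskx')$ is exactly the block-$i$ contribution read off from part (4) of Lemma \ref{lem: detdfct}, which is the displayed case list. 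With these two repairs (excluding the $I_f$-changing flip, and spelling out the chain analysis for the deletion) your argument coincides with the paper's intended proof.
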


\begin{corollary} \label{cor: dtight}
$\duble w$ is tight.
\end{corollary}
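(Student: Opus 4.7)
The task is to verify condition (2) of Theorem~\ref{thm: Deodhar}: for every non-full $\Dwordd$-mask $\maskx$, $\card{\ndfcts^0_{\Dwordd}(\maskx)}>\card{\dfcts^0_{\Dwordd}(\maskx)}$. My plan is a block-by-block decomposition. For each $i\in\{1,\dots,l\}$ let $f_i(\maskx)$ be the signed contribution from the four positions $4i-3,4i-2,4i-1,4i$, where a non-defect $0$-entry counts $+1$, a defect $0$-entry counts $-1$, and a $1$-entry counts $0$. Lemma~\ref{lem: detdfct} expresses each defect condition purely in terms of $\maskx_i\in\{0,1\}^4$ together with the two boolean indicators $[\lshft C(\maskx,i)]$ and $[C(\maskx,i)]$, so $f_i$ is a function of these six bits alone and the quantity to bound is $\sum_{i=1}^l f_i$.

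The computational core is a finite tabulation of $f_i$ over the sixteen choices of $\maskx_i$ and the four choices of the two indicators. This case check yields: (a) $f_i=0$ iff $\maskx_i=(1,1,1,1)$; (b) $f_i\ge 1$ whenever $\maskx_i\ne(1,1,1,1)$ and neither $\lshft C(\maskx,i)$ nor $C(\maskx,i)$ is active; (c) the only way $f_i$ can be negative is $f_i=-1$, occurring uniquely at $\maskx_i=(0,0,0,1)$ when both $\lshft C(\maskx,i)=C(\maskx,i)=1$. Thus if no neighbor condition is active we immediately get $\sum_i f_i\ge 1$.

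For blocks where neighbor conditions are active, I would exploit the witness structure built into the definitions: whenever $\lshft C(\maskx,i)=1$ there is a unique witness $r(i)\in\rshft N_{I_f}$ with $\nu_{I_f}(r(i))=i$ whose mask is forced into $\{(1,0,1,1),(*,*,1,0)\}$, and whenever $C(\maskx,i)=1$ there is a unique witness $q(i)=\nu_{I_f}(i)\ne i$ whose mask lies in $\{(1,1,0,1),(*,1,*,0)\}$; injectivity of $\nu_{I_f}$ (see \eqref{eq: NAB}) makes both assignments injective. A second short tabulation checks that each admissible witness mask gives $f\ge 1$ at its block, with enough slack to absorb one unit of deficit, while the intermediate blocks on the chain from $i$ to its witness lie in $I_f$ and therefore contribute $0$. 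Matching deficits to witness surplus injectively then yields $\sum_i f_i\ge 1$ for any non-full $\maskx$. The principal obstacle is precisely this last bookkeeping step: ensuring the pairing is globally injective when witnesses themselves carry further active conditions, so that no non-defect $0$ is double-counted. In the degenerate case $\maskx_i=(0,0,0,1)$ with both conditions active and a witness constrained to e.g.\ $\maskx_{r(i)}=(1,0,1,0)$ with $C(\maskx,r(i))$ also active (so $f_{r(i)}=0$), one has to walk along the chain $i\mapsto r(i)\mapsto r(r(i))\mapsto\cdots$ until uncontested surplus is found. An alternative organization by induction on $l$, peeling off isolated full blocks via Corollary~\ref{cor: changed}(2), should also work, but it requires an auxiliary existence argument for such a block.
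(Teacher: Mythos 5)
Your overall strategy --- checking condition (2) of Theorem \ref{thm: Deodhar} by means of Lemma \ref{lem: detdfct} and pairing defect zeros with non-defect zeros through $\nu_{I_f}$ --- is the same as the paper's, which builds an explicit injection $h_\maskx:\dfcts^0_{\Dwordd}(\maskx)\rightarrow\ndfcts^0_{\Dwordd}(\maskx)$ and then shows it is not onto. But as written your argument has genuine gaps. First, your tabulation claim (a) is false: a non-full block can have $f_i=0$ when a neighbor condition is active, e.g.\ $\maskx_i=(0,0,1,1)$ with $\lshft C(\maskx,i)$, $(0,1,0,1)$ with $C(\maskx,i)$, $(1,0,1,0)$ with $C(\maskx,i)$, or $(1,1,0,0)$ with $\lshft C(\maskx,i)$. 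Consequently your final assertion that the matching ``yields $\sum_i f_i\ge 1$'' does not follow: matching each deficit block to a surplus witness gives at best $\sum_i f_i\ge 0$, and non-fullness of $\maskx$ alone does not supply the missing strict unit, since the remaining non-full blocks may all have $f_i=0$. The paper obtains strictness from a specific observation you have no analogue of: for the non-full block $i$ with $j_i$ minimal one has $i\notin\nu_{I_f}(N_{I_f})$ (because $\nu_{I_f}$ strictly increases $j$-values), so $\lshft C(\maskx,i)$ fails there and a non-defect zero of that block escapes the pairing.

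Second, the step you yourself flag as the principal obstacle is left unresolved, and the fallback you sketch (walking along chains until uncontested surplus appears) is both undeveloped and unnecessary. What rescues the matching are two structural facts: $\rshft N_{I_f}$ and $\lshft N_{I_f}$ are disjoint and $\nu_{I_f}$ is injective. Hence a witness $r\in\rshft N_{I_f}$ for $\lshft C(\maskx,i)$ can never have $C(\maskx,r)$ active (so the ``degenerate case'' $\maskx_{r}=(1,0,1,0)$ with $f_{r}=0$ that you worry about cannot occur), and if $C(\maskx,i)$ holds then $\lshft C(\maskx,\nu_{I_f}(i))$ fails (so a witness with mask $(1,1,0,0)$ in fact has $f=2$). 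These are exactly the facts the paper uses to prove that $h_\maskx$ is well defined and injective; without them your case (c) deficits are not actually absorbed, and without the minimal-$j_i$ argument the inequality is not strict. So the proposal is a sketch of the paper's argument in block-summed form, with the two decisive points (witness surplus and strictness) missing.
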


\begin{proof}
Assume that $I_f\ne\{1,\dots l\}$ i.e., $\maskx$ is not full.
We identify $\{1,\dots,l\}\times\{1,2,3,4\}$ with $\{1,\dots,4l\}$ by $(i,\delta)\mapsto 4(i-1)+\delta$.
For any $i\notin I_f$ let $m_i\in\{1,2,3,4\}$ be the smallest index such that $\maskx_{i,m_i}=0$.
It follows from Lemma \ref{lem: detdfct} that $(i,m_i)\in \ndfcts^0_{\Dwordd}(\maskx)$ (see \eqref{def: defect0}).
Define a map
\[
h_{\maskx}:\dfcts^0_{\Dwordd}(\maskx)\rightarrow\ndfcts_{\Dwordd}^0(\maskx)
\]
according to the rule
\[
h_\maskx((i,\delta))=\begin{cases}(i,1)&\delta=2,\\(\nu_{I_f}(i),3)&\delta>2,\ \maskx_{i,1}=\maskx_{i,3}\text{ and }\maskx_{\nu_{I_f}(i)}=(1,1,0,1),\\
(\nu_{I_f}(i),4)&\delta>2,\ \maskx_{i,1}=\maskx_{i,3}\text{ and }\maskx_{\nu_{I_f}(i)}=(*,1,*,0),\\
(i,3)&\delta=4\text{ and }\maskx_i=(1,*,0,0).
\end{cases}
\]
By Lemma \ref{lem: detdfct} $h_\maskx$ is well defined since
if $C(\maskx,i)$ is satisfied then $\lshft C(\maskx,\nu_{I_f}(i))$ is not satisfied.
Moreover, $h_{\maskx}$ is injective since $\nu_{I_f}$ is.
We claim that $h_\maskx$ is not onto.
Indeed, let $i\notin I_f$ be such that $j_i$ is minimal. Then $i\notin\nu_{I_f}(N_{I_f})$, and in particular
$\lshft C(\maskx,i)$ is not satisfied. Thus, $(i,\delta)$ is not in the image of $h_\maskx$ unless $\maskx_i=(1,0,0,0)$ and $\delta=3$.
Hence, $(i,m_i)\in\ndfcts^0_{\Dwordd}(\maskx)$ but $(i,m_i)$ is not in the image of $h_\maskx$.

It follows that
\[
\card{\ndfcts_{\Dwordd}^0(\maskx)}>\card{\dfcts^0_{\Dwordd}(\maskx)}.
\]
Since $\maskx$ is arbitrary (non-full), $\duble w$ is tight.
\end{proof}

\begin{remark} \label{rem: m>2}
Note that for $m=3$, $\duble w$ avoids the five permutations in \eqref{eq: 5perms} if and only if
$w$ avoids the patterns $(321)$, $(3412)$, $(3142)$, $(2413)$. It is easy to see that these $w$'s are exactly the permutations
which can be written as direct sums of (left or right) cyclic shifts. In principle, it should be possible to check
Conjecture \ref{conj: main} for $m=3$ for these permutations.
We will not provide any details here.
Note that for $m>3$ $\duble w$ does not avoid $(56781234)$ unless $w=e$ so this method fails.
\end{remark}

\section{Double cosets}
Let $H=H_n$ be the parabolic subgroup of $S_{2n}$ consisting of permutations which preserve each of the sets
$\{2i-1,2i\}$, $i=1,\dots,n$. Thus, $H$ is an elementary abelian group of order $2^n$.
Note that $H$ is normalized by $\duble{S_n}$.
It is well-known that the double cosets $H\bs S_{2n}/H$ are parameterized by $n\times n$-matrices
with non-negative integer entries, whose sums along each row and each column are all equal to $2$.
By the Birkhoff--von-Neumann Theorem, these matrices are precisely the sums of two $n\times n$-permutation matrices.

We denote by $\red$ the set of bi-$H$-reduced elements in $S_{2n}$, i.e.
\[
\red=\{w\in S_{2n}:w(2i)>w(2i-1)\text{ and }w^{-1}(2i)>w^{-1}(2i-1)\text{ for all }i=1,\dots,n\}.
\]
Each $H$-double coset contains a unique element of $\red$.

Our goal in this section is to parameterize the double cosets of $H$ containing an element $\le\duble w$,
or equivalently, the set $\red_{\le\duble w}:=\{u\in\red:u\le\duble w\}$.

\begin{definition}
Let $\Quadr=\Quadr^\wordd$ be the set of triplets $(I_e,I,I_f)$ of subsets of $\{1,\dots,l\}$ such that $I_f\subset I_e$
and $I\subset N_{I_f}^{I_e}$.
We will write $I=\rshft I\cup\lshft I$ (disjoint union) where $\rshft I=I\cap\rshft N_{I_f}$ and $\lshft I=I\cap\lshft N_{I_f}$.
\end{definition}

For any $u\le\duble w$ define $\prm(u)=\prm^\wordd(i)=(I_f,\rshft I\cup\lshft I,I_e)$ where
\begin{gather*}
I_e=\{i:s_{2j_i}\le u\},\ \ I_f=\{i:\duble{s_{j_i}}\le u\},\\
\rshft I=\{i\in\rshft N_{I_f}:s_{2j_i}s_{2j_i+1}\duble{s_{j_i+1}}\dots\duble{s_{j_{\nu_{I_f}(i)-1}}}s_{2j_{\nu_{I_f}(i)}}\le u\},\\
\lshft I=\{i\in\lshft N_{I_f}:s_{2j_{\nu_{I_f}(i)}}\duble{s_{j_{\nu_{I_f}(i)-1}}}\dots\duble{s_{j_i+1}}s_{2j_i+1}s_{2j_i}\le u\}.
\end{gather*}
Clearly, $\prm(u)\in\Quadr$ by \eqref{eq: NAB}. Note that if $i\in\rshft N_{I_f}$ (resp., $i\in\lshft N_{I_f}$) then
$s_{j_i+1}\dots s_{j_{\nu_{I_f}(i)-1}}$ (resp., $s_{j_{\nu_{I_f}(i)-1}}\dots s_{j_i+1}$) is the cyclic shift
\[
t\mapsto\begin{cases}t+1&j_i<t<j_{\nu_{I_f}(i)}\\j_i+1&t=j_{\nu_{I_f(i)}}\\t&\text{otherwise}\end{cases}\ \ \ \text{(resp., }
t\mapsto\begin{cases}t-1&j_i+1<t\le j_{\nu_{I_f}(i)}\\j_{\nu_{I_f(i)}}&t=j_i+1\\t&\text{otherwise}\end{cases}).
\]

Also,
\begin{equation} \label{eq: pdc}
\text{if $v\le u\le\duble w$ and $v\in HuH\cap\red$ then $\prm(v)=\prm(u)$.}
\end{equation}
Indeed, for any $y\in\red$ we have $y\le u$ if and only if $y\le v$.
Thus, $\prm$ is determined by its values on $\red_{\le\duble w}$.

In the other direction, consider the map
\[
\iprm=\iprm^\wordd:\Quadr\rightarrow S_{2n}
\]
given by $Q=(I_e,I,I_f)\mapsto\pi(\omegaw_Q)$ where
\begin{equation} \label{def: omegaQ}
\omegaw_Q=\mathbf{y}_1\dots\mathbf{y}_l,\ \ \mathbf{y}_i=\begin{cases}\emptyset&i\notin I_e,\\\duble{\s_{j_i}}=\s_{2j_i}\s_{2j_i-1}\s_{2j_i+1}\s_{2j_i}&i\in I_f,\\
\s_{2j_i}\s_{2j_i+1}&i\in \rshft{I},\\\s_{2j_i+1}\s_{2j_i}&i\in\lshft{I},\\\s_{2j_i}&\text{otherwise.}\end{cases}
\end{equation}
Clearly $\iprm(Q)\le\duble w$ for all $Q\in\Quadr$.
We also remark that
\begin{equation} \label{eq: qtau}
\iprm((I_x,\emptyset,I_x))=\duble x\text{ for all }x\le w
\end{equation}
(see \eqref{eq: Itau}).

\begin{proposition} \label{prop: paramdc}
The map $\prm$ is a bijection 
between $\red_{\le\duble w}$ and $\Quadr$ whose inverse is $\iprm$.
\end{proposition}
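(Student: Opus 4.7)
The plan has three ingredients: first, verify that $\iprm$ maps $\Quadr$ into $\red_{\le \duble w}$; second, show $\prm \circ \iprm = \id_\Quadr$; third, show $\iprm \circ \prm = \id$ on $\red_{\le \duble w}$. Together these yield the bijection, with $\iprm$ as the two-sided inverse of $\prm$.

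For the first ingredient, the inequality $\iprm(Q) \le \duble w$ is immediate from \eqref{eq: bint} since $\omegaw_Q$ is a subword of the reduced decomposition $\Dwordd$. To prove bi-$H$-reducedness, I would analyze the action on pairs $\{2i-1, 2i\}$ of each factor $\mathbf{y}_k$ from \eqref{def: omegaQ}: the blocks $\duble{\s_{j_k}}$ permute pairs wholesale; the two-letter blocks $\s_{2j_k}\s_{2j_k+1}$ or $\s_{2j_k+1}\s_{2j_k}$ at $k \in I$ chain through intervening $\duble{\s_{j_m}}$'s into a terminal factor at $\nu_{I_f}(k)$, and each such chain acts as a block-level permutation; the isolated $\s_{2j_k}$'s at $k \in I_e \setminus (I_f \cup I \cup \nu_{I_f}(I))$ touch pairs not further disturbed by neighboring blocks, by the very definition of $N_{I_f}^{I_e}$.

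For the second ingredient, the key is that the $j_i$'s are distinct (the Boolean hypothesis), so $\s_{2j_i}$ appears in $\omegaw_Q$ only inside $\mathbf{y}_i$. I would first verify that $\omegaw_Q$ is a reduced expression for $\iprm(Q)$ by comparing its total length $3\card{I_f} + \card{I} + \card{I_e}$ with $\ell(\iprm(Q))$ computed via the block analysis. Then the subword characterization of Bruhat order yields: $s_{2j_i} \le \iprm(Q)$ iff $\mathbf{y}_i \ne \emptyset$ iff $i \in I_e$; $\duble{s_{j_i}} \le \iprm(Q)$ forces two copies of $\s_{2j_i}$ in a reduced subword (since every reduced expression of $\duble{s_{j_i}}$ contains two $\s_{2j_i}$'s), hence $\mathbf{y}_i = \duble{\s_{j_i}}$ and $i \in I_f$; and analogous arguments using the longer explicit expressions in the definitions of $\rshft I$ and $\lshft I$ recover the orientation data along the neighbor chains.

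For the third ingredient, since both $u$ and $\iprm(\prm(u))$ lie in $\red_{\le \duble w}$, by uniqueness of bi-$H$-reduced representatives it suffices to show they lie in the same $H$-double coset. I would establish this by comparing their associated doubly-stochastic matrices in the Birkhoff--von Neumann parametrization of $H \bs S_{2n} / H$: the matrix of $\iprm(Q)$ can be read off directly from $Q$ using the block analysis in the first step, while the matrix of $u$ is determined by $\prm(u)$ through the data of which $s_{2j_i}$, $\duble{s_{j_i}}$, and chain elements lie $\le u$. The main obstacle I expect is the bookkeeping involved in the first and third ingredients: tracking how chains of $\duble{\s_{j_m}}$'s link partial moves at $k \in I$ to their completions at $\nu_{I_f}(k)$, and extracting a double-coset invariant from $\prm(u)$ that matches the block-level permutation arising from $\iprm(\prm(u))$.
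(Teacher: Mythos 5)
Your first two ingredients are sound and run essentially parallel to the paper's Lemma \ref{lem: dblcst}: reducedness of $\omegaw_Q$ (the paper checks this via the defect computation of Lemma \ref{lem: detdfct}; your length count by blocks would also do), the subword arguments recovering $I_e$, $I_f$, $\rshft I$, $\lshft I$ from $\iprm(Q)$, and membership of $\iprm(Q)$ in $\red$ (which the paper obtains more indirectly, via \eqref{eq: pdc} and the reduced representative of $H\iprm(Q)H$, rather than by a direct block analysis). The genuine gap is in your third ingredient. Reducing to ``same $H$-double coset'' via uniqueness of bi-$H$-reduced representatives is fine, but your mechanism for that step --- ``the matrix of $u$ is determined by $\prm(u)$'' --- is precisely the assertion that $\prm$ is injective on the double cosets meeting $\{u\in S_{2n}:u\le\duble w\}$, which, given your first two ingredients, is exactly the remaining content of the proposition. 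The data $\prm(u)$ only records whether finitely many specific elements are $\le u$; nothing in your outline explains how to recover the Birkhoff--von Neumann matrix of $HuH$ from this partial Bruhat data, and for a general $u\le\duble w$ (given, say, as $\pi(\Dwordd[\maskx])$ for some mask $\maskx$) identifying its double coset is where all the work lies.

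This is where the paper spends its effort: Lemma \ref{lem: sm} lists moves on $\Dwordd$-masks (commutation/braid relations and left or right multiplications by elements of $H$) that do not change the double coset $H\pi(\Dwordd[\maskx])H$, and Lemma \ref{lem: descoset} uses two graphs built from these moves to push an arbitrary mask to a ``special'' one whose subword is literally $\omegaw_{Q_{\maskx}}$, with $Q_{\maskx}$ given by the explicit formulas \eqref{eq: Imaskx}; this yields $\pi(\Dwordd[\maskx])\in H\iprm(Q_{\maskx})H$ for every mask, hence surjectivity of $\iprm$ onto $\red_{\le\duble w}$ and $\iprm\circ\prm=\id$ there. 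Your proposal needs a substitute for this step: either such a normal-form argument for arbitrary masks, or an actual computation of the matrix of every $u\le\duble w$ together with a proof that it is a function of $\prm(u)$ (or, alternatively, a counting argument showing $\card{\red_{\le\duble w}}=\card{\Quadr}$, which you also do not supply). As written, the ``bookkeeping'' you flag as the main obstacle is not bookkeeping; it is the missing core of the argument.
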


The proposition will follow from Lemmas \ref{lem: dblcst} and \ref{lem: descoset} below.

\begin{lemma} \label{lem: dblcst}
We have $\prm\circ\iprm=\id_{\Quadr}$. In particular, $\iprm$ is injective.
Moreover, the image of $\iprm$ is contained in $\red_{\le\duble w}$.
\end{lemma}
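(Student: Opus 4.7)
The plan is to verify the three assertions of Lemma \ref{lem: dblcst} by direct analysis of the word $\omegaw_Q$ from \eqref{def: omegaQ}, exploiting throughout the fact that, because $w$ is Boolean, the indices $j_1,\dots,j_l$ are pairwise distinct.

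First, $\iprm(Q)\le\duble w$ is immediate from \eqref{eq: bint}: by construction, for each $i$ the block $\mathbf{y}_i$ is a subword of the $i$-th block $\s_{2j_i}\s_{2j_i-1}\s_{2j_i+1}\s_{2j_i}$ of $\Dwordd$, so $\omegaw_Q$ is a subword of the reduced expression $\Dwordd$ for $\duble w$.

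Next, I would establish $\prm(\iprm(Q))=Q$ by recovering the three components separately. The letter $\s_{2j_i}$ appears only in the $i$-th block of $\Dwordd$, so $s_{2j_i}\le\iprm(Q)$ iff $\mathbf{y}_i$ is nonempty, iff $i\in I_e$. Similarly, $\duble{s_{j_i}}$ has length $4$ with support $\{\alpha_{2j_i-1},\alpha_{2j_i},\alpha_{2j_i+1}\}$, and the only length-$4$ subword of $\Dwordd$ with that support is the full $i$-th block $\duble{\s_{j_i}}$, so $\duble{s_{j_i}}\le\iprm(Q)$ iff $i\in I_f$. For $\rshft I$ and $\lshft I$, the cyclic-shift elements in the definition of $\prm$ have reduced decompositions threading through the blocks $\mathbf{y}_i,\mathbf{y}_{i_1},\dots,\mathbf{y}_{\nu_{I_f}(i)}$ using the full quadruples $\duble{\s_{j_{i_k}}}$ for intermediate $i_k\in I_f$; distinctness of the $j_k$ makes such a threading unique, so the corresponding elements lie below $\iprm(Q)$ precisely when $\mathbf{y}_i$ is the two-letter block with the correct orientation, i.e., $i\in\rshft I$ (respectively $i\in\lshft I$).

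Third, for $\iprm(Q)\in\red$, I would check directly that $\iprm(Q)(2k-1)<\iprm(Q)(2k)$ and $\iprm(Q)^{-1}(2k-1)<\iprm(Q)^{-1}(2k)$ for every $k=1,\dots,n$. The action of $\iprm(Q)$ on a position is determined by the sequence of blocks $\mathbf{y}_i$ whose window $\{2j_i-1,2j_i,2j_i+1,2j_i+2\}$ that position passes through, and a case-by-case analysis according to the four possibilities for $\mathbf{y}_i$ confirms the required inequalities. The case $i\in I_f$ is painless: $\duble{s_{j_i}}$ swaps the two adjacent pairs in its window as blocks. The cases $i\in\rshft I\cup\lshft I$ are the delicate ones: the $3$-cycle block breaks one of the two pairs in its window, but by definition of $\rshft N_{I_f}$ and $\lshft N_{I_f}$, the chain of intervening full blocks $\duble{\s_{j_{i_k}}}$ for $i_k\in I_f$, together with the terminal block $\mathbf{y}_{\nu_{I_f}(i)}$, re-assembles the broken pair before the end of $\omegaw_Q$. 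Injectivity of $\iprm$ then follows formally from $\prm\circ\iprm=\id_{\Quadr}$.

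The main obstacle is this last step, specifically the $3$-cycle cases: bi-$H$-reducedness hinges on a non-local interaction between $\mathbf{y}_i$ and the neighboring blocks indexed by the chain $i_1,\dots,i_{t-1}\in I_f$ connecting $i$ to $\nu_{I_f}(i)$. The definitions of $\rshft N_{I_f}$ and $\lshft N_{I_f}$ are engineered precisely for this compensation, so the combinatorial heart of the argument is tracking the cumulative effect of these blocks on each pair $\{2k-1,2k\}$.
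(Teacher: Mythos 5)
Your proposal has two genuine gaps. The first is in the recovery step $\prm(\iprm(Q))=Q$: you argue entirely by subword considerations inside $\omegaw_Q$, but this only gives the ``only if'' directions. The ``if'' directions --- $i\in I_e\Rightarrow s_{2j_i}\le\iprm(Q)$, $i\in I_f\Rightarrow\duble{s_{j_i}}\le\iprm(Q)$, $i\in I\Rightarrow$ the corresponding cyclic-shift element is $\le\iprm(Q)$ --- do not follow merely from the fact that the relevant letters or factors occur in $\omegaw_Q$: in a non-reduced word a letter can occur and yet the corresponding simple reflection need not lie below the product (e.g.\ $\pi(\s_1\s_1)=e$). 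The paper's proof therefore begins by showing that $\omegaw_Q$ \emph{is} reduced, by realizing $\omegaw_Q=\Dwordd[\maskx]$ for the explicit mask with $\maskx_i\in\{(0,0,0,0),(1,1,1,1),(1,0,1,0),(0,0,1,1),(0,0,0,1)\}$ and checking via Lemma \ref{lem: detdfct} that no letter creates a descent (the condition $C(\maskx,i)$ never holds); only then do \eqref{eq: bint} and the subword property yield the inclusions $I_e\subset I_e^\circ$, $I_f\subset I_f^\circ$, $I\subset I^\circ$. This reducedness verification is not automatic (adjacent blocks share the letters $\s_{2j_i\pm1}$) and is missing from your argument. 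A smaller slip in the same step: it is not true that the only length-$4$ subword of $\Dwordd$ with support $\{\alpha_{2j_i-1},\alpha_{2j_i},\alpha_{2j_i+1}\}$ is the $i$-th block, since $\s_{2j_i\pm1}$ also occurs in the blocks of the adjacent indices; the correct point (used in the paper) is that every reduced expression of $\duble{s_{j_i}}$ uses $\s_{2j_i}$ twice, and $\s_{2j_i}$ occurs twice in $\omegaw_Q$ only when $i\in I_f$.

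The second gap is the bi-$H$-reducedness of $\iprm(Q)$. Here you take a genuinely different route from the paper --- a direct computation of the one-line form of $\iprm(Q)$ and of its inverse, window by window --- but you only sketch it and you yourself identify the crux (the non-local compensation along the chain of full blocks from $i$ to $\nu_{I_f}(i)$ when $\mathbf{y}_i$ is a $3$-cycle block) as unresolved. As written this is a plan, not a proof. The paper sidesteps the computation entirely: once $\prm(\iprm(Q))=Q$ is known, take $u\in\red\cap H\iprm(Q)H$; then $u\le\iprm(Q)$, and \eqref{eq: pdc} gives $\prm(u)=\prm(\iprm(Q))=Q$, which forces $u=\iprm(Q)$ because all the elements recorded by $Q$ (the $\duble{s_{j_i}}$ for $i\in I_f$, the $s_{2j_i}$ for $i\in I_e$, and the cyclic shifts for $i\in I$) lie below $u$, leaving no room below $\iprm(Q)$. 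Once you have repaired the first gap, you could either adopt this short indirect argument or actually carry out the positional bookkeeping you describe; but in its current form your proof of membership in $\red$ is incomplete.
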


\begin{proof}
Let $Q=(I_e,I,I_f)\in\Quadr$.
We first claim that $\omegaw_Q$ is a reduced word.
Indeed, let
\[
\maskx_i=\begin{cases}(0,0,0,0)&i\notin I_e,\\(1,1,1,1)&i\in I_f,\\(1,0,1,0)&i\in\rshft I,\\(0,0,1,1)&i\in\lshft I,\\(0,0,0,1)&\text{otherwise,}\end{cases}
\]
so that $\omegaw_Q=\Dwordd[\maskx]$.
Then it is easy to see from Lemma \ref{lem: detdfct} that $\Dwordd[\maskx]$ is reduced i.e., that $\pi(\Dwordd^{(4i-k)}[\maskx])\alpha_{2j_i+t_k}>0$
whenever $i=1,\dots,l$ and $k=1,2,3,4$ are such that $\maskx_{i,k}=1$ where $t_1=t_4=0$, $t_2=1$, $t_3=-1$.
(The condition $C(\maskx,i)$ is never satisfied.)

Let us show that $\prm(\iprm(Q))=Q$. Write $\prm(\iprm(Q))=(I_f^\circ,\rshft I^\circ\cup\lshft I^\circ,I_e^\circ)$.
Since $\omegaw_Q$ is reduced, it is clear from the definition and from \eqref{eq: bint} that $I_e^\circ=I_e$, $I_f\subset I_f^\circ$,
$\rshft I\subset\rshft I^\circ$, $\lshft I\subset\lshft I^\circ$.
Since the only reduced decompositions of $\duble{s_{j_i}}$ are $\s_{2j_i}\s_{2j_i\pm 1}\s_{2j_i\mp 1}\s_{2j_i}$
(and in particular $\s_{2j_i}$ occurs twice) we must have $I_f=I_f^\circ$.
Let $i\in\rshft N_{I_f}$ and suppose that $v:=s_{2j_i}s_{2j_i+1}\duble{s_{j_i+1}}\dots\duble{s_{j_{\nu_{I_f}(i)-1}}}s_{2j_{\nu_{I_f}(i)}}\le\pi(\omegaw_Q)$.
Then $v$ is represented by a subword of $\omegaw_Q$. On the other hand, it is clear that any subword of $\omegaw_Q$
supported in $\{\s_k:2j_i\le k\le 2j_{\nu_{I_f}(i)}\}$ is a subword of
$\s_{2j_i}\s_{2j_i+1}\duble{\s_{j_i+1}}\dots\duble{\s_{j_{\nu_{I_f}(i)-1}}}\s_{2j_{\nu_{I_f}(i)}}$ and the latter is a subword of $\omegaw_Q$
only if $i\in\rshft I$. Hence, $\rshft I=\rshft I^\circ$. Similarly one shows that $\lshft I=\lshft I^\circ$.

Finally, let us show that $\iprm(Q)\in \red$. Let $u\in\red\cap H\iprm(Q)H$. Then $u\le\iprm(Q)$ and by \eqref{eq: pdc} and the above
we have $\prm(u)=\prm(\iprm(Q))=Q$.
It is easy to see that this is impossible unless $u=\iprm(Q)$.
\end{proof}

\begin{remark}
Define a partial order on $\Quadr$ by $Q_1=(I_e^{(1)},I^{(1)},I_f^{(1)})\le Q_2=(I_e^{(2)},I^{(2)},I_f^{(2)})$ if
$I_e^{(1)}\subset I_e^{(2)}$, $I_f^{(1)}\subset I_f^{(2)}$ and
$I^{(1)}\subset I^{(2)}\cup I_f^{(2)}\cup\nu_{I_f^{(1)}}^{-1}(I_f^{(2)})$.
Then it is not hard to check that $\iprm(Q_1)\le\iprm(Q_2)$ if and only if $Q_1\le Q_2$.
We omit the details since we will not use this fact.
\end{remark}

In order to complete the proof of Proposition \ref{prop: paramdc} we first record the following elementary assertion.

For any $i=1,\dots,l$ let $\rshft\mu_{\pm}(i)$ (resp., $\lshft\mu_{\pm}(i)$) be $i_t$ where $t\ge0$ is the largest index for which there exist (unique) indices $i=i_0<i_1<\dots<i_t\le l$
(resp., $i=i_0>i_1>\dots>i_t>0$) such that $j_{i_k}=j_i\pm k$ for $k=1,\dots,t$.

\begin{lemma} \label{lem: sm}
Let $\maskx,\maskx'\in(\{0,1\})^l$, $I_f=\{i=1,\dots,l:\maskx_i=(1,1,1,1)\}$ and $I_e=\{i=1,\dots,l:\maskx_i\ne(0,*,*,0),(1,0,0,1)\}$.
\begin{enumerate}
\item Suppose that $i$ is such that $\maskx_j=\maskx'_j$ for all $j\ne i$ and let $\epsilon_1,\epsilon_2\in\{0,1\}$.
\begin{enumerate}
\item \label{case: triv} If $\maskx_i=(1,0,0,1)$ and $\maskx'_i=(0,0,0,0)$ then $\pi(\Dwordd[\maskx])=\pi(\Dwordd[\maskx'])$.
\item \label{case: rshft} If $i\notin\rshft N_{I_f}\cap\nu_{I_f}^{-1}(I_e)$ and either $\maskx_i=(\epsilon_1,\epsilon_2,1,0)$, $\maskx'_i=(\epsilon_1,\epsilon_2,0,0)$
or $\maskx_i=(1,0,1,1)$, $\maskx'_i=(0,0,1,1)$ then $\pi(\Dwordd[\maskx])=\pi(\Dwordd[\maskx'])s_{2k+1}$
where $k=j_{\nu_{I_f}(i)}-1$ if $i\in\rshft N_{I_f}$ and $k=j_{\rshft\mu_+(i)}$ otherwise.
\item If $i\notin\lshft N_{I_f}\cap\nu_{I_f}^{-1}(I_e)$ and either $\maskx_i=(0,\epsilon_1,1,\epsilon_2)$, $\maskx'_i=(0,\epsilon_1,0,\epsilon_2)$
or $\maskx_i=(1,0,1,1)$, $\maskx'_i=(1,0,1,0)$ then $\pi(\Dwordd[\maskx])=s_{2k+1}\pi(\Dwordd[\maskx'])$
where $k=j_{\nu_{I_f}(i)}-1$ if $i\in\lshft N_{I_f}$ and $k=j_{\lshft\mu_+(i)}$ otherwise.
\item If $i\notin\nu_{I_f}(\lshft N_{I_f}\cap I_e)$ and either $\maskx_i=(\epsilon_1,1,\epsilon_2,0)$, $\maskx'_i=(\epsilon_1,0,\epsilon_2,0)$
or $\maskx_i=(1,1,0,1)$, $\maskx'_i=(0,1,0,1)$ then $\pi(\Dwordd[\maskx])=\pi(\Dwordd[\maskx'])s_{2k-1}$
where $k=j_{\nu_{I_f}^{-1}(i)}+1$ if $i\in\nu_{I_f}(\lshft N_{I_f})$ and $k=j_{\rshft\mu_-(i)}$ otherwise.
\item If $i\notin\nu_{I_f}(\rshft N_{I_f}\cap I_e)$ and either $\maskx_i=(0,1,\epsilon_1,\epsilon_2)$, $\maskx'_i=(0,0,\epsilon_1,\epsilon_2)$
or $\maskx_i=(1,1,0,1)$, $\maskx'_i=(1,1,0,0)$ then $\pi(\Dwordd[\maskx])=s_{2k-1}\pi(\Dwordd[\maskx'])$
where $k=j_{\nu_{I_f}^{-1}(i)}+1$ if $i\in\nu_{I_f}(\rshft N_{I_f})$ and $k=j_{\lshft\mu_-(i)}$ otherwise.
\end{enumerate}
\item Suppose that $i\in N_{I_f}$ is such that $\maskx_j=\maskx'_j$ for all $j\ne i,\nu_{I_f}(i)$.
Assume that $\maskx'_{i,r}=\maskx_{i,r}$ for $r=1,2,4$, $\maskx'_{i,3}=1-\maskx_{i,3}$,
$\maskx'_{\nu_{I_f}(i),r}=\maskx_{\nu_{I_f}(i),r}$ for $r=1,3,4$, $\maskx'_{\nu_{I_f}(i),2}=1-\maskx_{\nu_{I_f}(i),2}$
and $\maskx_{i_1,4}=\maskx'_{i_2,1}=0$ where $i_1=\min(i,\nu_{I_f}(i))$, $i_2=\max(i,\nu_{I_f}(i))$.
Then $\pi(\Dwordd[\maskx])=\pi(\Dwordd[\maskx'])$.
\end{enumerate}
\end{lemma}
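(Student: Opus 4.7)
The plan is to reduce each identity to a direct computation in the symmetric group using the commutation relations $s_as_b=s_bs_a$ for $|a-b|\ge 2$ together with the braid relation $s_as_{a+1}s_a=s_{a+1}s_as_{a+1}$. In part (1) the masks $\maskx$ and $\maskx'$ differ by a single bit at a single coordinate of block $i$, so $\pi(\Dwordd[\maskx])$ and $\pi(\Dwordd[\maskx'])$ differ by the insertion or removal of a single simple reflection $s_a$ in a prescribed position of the word. The identity $\pi(\Dwordd[\maskx])=\pi(\Dwordd[\maskx'])s_{2k\pm 1}$ is then equivalent to the assertion that the conjugate of $s_a$ by the suffix of $\Dwordd$ beyond that position equals $s_{2k\pm 1}$; the left-multiplication variants reduce in the same way via the prefix.

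Case (\ref{case: triv}) is immediate: when $\maskx_i=(1,0,0,1)$ the contribution of block $i$ collapses to $s_{2j_i}\cdot s_{2j_i}=e$ with empty letters at the middle positions, matching the empty contribution of $\maskx'_i=(0,0,0,0)$. For the other single-bit flips the inserted letter is $s_{2j_i\pm 1}$, and it commutes with every letter of any block $k\ne i$ unless $j_k$ differs from $j_i$ by exactly $1$, which happens for at most one block per direction. The engine is the single-step braid identity
\[
s_{2c+1}\cdot(s_{2c+2}s_{2c+1}s_{2c+3}s_{2c+2})=(s_{2c+2}s_{2c+1}s_{2c+3}s_{2c+2})\cdot s_{2c+3},
\]
proved by two applications of the braid relation, which shows that pushing $s_{2c+1}$ through a full block with $j$-value $c+1$ transforms it into $s_{2c+3}$ on the other side. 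The hypotheses on $i$ (e.g.\ $i\notin\rshft N_{I_f}\cap\nu_{I_f}^{-1}(I_e)$ in case (\ref{case: rshft})) are precisely the conditions that, when tracing the unique maximal consecutive chain of $j$-values relevant to $s_a$, either every intermediate block lies in $I_f$ (so successive braid moves propagate $s_a$ through the whole chain, incrementing or decrementing its subscript by $2$ per step) or the chain breaks at a block that is outside $I_e$ and whose contribution commutes with the transformed reflection. Iteration then yields $s_{2k\pm 1}$ on the other side with the stated value of $k$; the remaining subcases of part (1) reduce to (\ref{case: rshft}) either by reversing the word via $\wordd^\rev$ or by swapping the roles of the two middle coordinates of each block.

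For part (2) a short case analysis on the four sign combinations of $(\maskx_{i,3},\maskx_{\nu_{I_f}(i),2})$ reduces the claim uniformly to the single identity
\[
s_{2j_i+1}\cdot B=B\cdot s_{2j_{\nu_{I_f}(i)}-1},
\]
where $B$ is the product of the contributions of all blocks strictly between $i$ and $\nu_{I_f}(i)$. The extra assumption $\maskx_{i_1,4}=\maskx'_{i_2,1}=0$ strips away the flanking copies of $s_{2j_i}$ and $s_{2j_{\nu_{I_f}(i)}}$, leaving $B$ with only the $t-1$ full-block contributions $\duble{s_{j_{c_1}}},\dots,\duble{s_{j_{c_{t-1}}}}$ along the interior of the chain (the other blocks commute trivially with the transported reflection). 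Applying the core braid identity $t-1$ times transports $s_{2j_i+1}$ through $B$ and produces $s_{2j_{\nu_{I_f}(i)}-1}$ on the right, as required.

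The main obstacle is not the core identity, which is a one-line check, but the combinatorial bookkeeping: each subcase specifies a particular mask configuration, and one must carefully verify that its hypotheses exclude exactly those configurations where an extra $s_{2j_k}$ (or its mirror image) inside one of the blocks would block the clean propagation. In writing the proof I would treat case (\ref{case: rshft}) in full as the prototype, then indicate the symmetric substitutions (reversal of $\wordd$, swap of $\s_{2j_k-1}\leftrightarrow\s_{2j_k+1}$) that reduce the other subcases of part (1) to it, and finally perform the four-way sign analysis for part (2).
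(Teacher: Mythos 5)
Your proposal is correct and takes essentially the same route as the paper: the paper likewise dismisses case (\ref{case: triv}) as trivial and deduces case (\ref{case: rshft}) from the braid relation $s_{2j_i}s_{2j_i+1}s_{2j_i}=s_{2j_i+1}s_{2j_i}s_{2j_i+1}$ together with the push-through relation $s_{2j_i+1}\duble{s_{j_i+1}}\dots\duble{s_k}=\duble{s_{j_i+1}}\dots\duble{s_k}s_{2k+1}$, of which your ``engine'' identity is exactly the single-block step, with the remaining cases declared similar. Your write-up merely supplies more of the commutation bookkeeping and the four-way sign analysis in part (2) that the paper leaves implicit.
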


\begin{proof}
Part \ref{case: triv} is trivial.
Part \ref{case: rshft} follows from the braid relation
\[
s_{2j_i}s_{2j_i+1}s_{2j_i}=s_{2j_i+1}s_{2j_i}s_{2j_i+1}
\]
and the relation
\begin{gather*}
s_{2j_i+1}\duble{s_{j_i+1}}\dots\duble{s_k}=\duble{s_{j_i+1}}\dots\duble{s_k}s_{2k+1}
\end{gather*}
where $k=j_{\nu_{I_f}(i)}-1$ if $i\in\rshft N_{I_f}$ and $k=j_{\rshft\mu_+(i)}$ otherwise.
The other parts are similar.
\end{proof}

Next, we explicate, for any $\Dwordd$-mask $\maskx\in(\{0,1\}^4)^l$, the $H$-double coset of $\pi(\Dwordd[\maskx])$, thereby finishing the proof of Proposition \ref{prop: paramdc}.

\begin{lemma} \label{lem: descoset}
For any $\maskx\in(\{0,1\}^4)^l$ let $Q_\maskx=(I_e,\rshft{I}\cup\lshft{I},I_f)$ where
\begin{subequations} \label{eq: Imaskx}
\begin{gather} \label{eq: Ief}
I_e=\{i=1,\dots,l:\maskx_i\ne(0,*,*,0),(1,0,0,1)\},\\
I_f=\{i=1,\dots,l:\maskx_i=(1,1,1,1)\},\\
\label{eq: rshftI}
\rshft{I}=\{i\in\rshft{N}_{I_f}^{I_e}:\maskx_{i,1}\cdot\maskx_{i,3}\ne\maskx_{\nu_{I_f}^{I_e}(i),2}\cdot\maskx_{\nu_{I_f}^{I_e}(i),4}\},\\
\label{eq: lshftI}
\lshft{I}=\{i\in\lshft{N}_{I_f}^{I_e}:\maskx_{i,3}\cdot\maskx_{i,4}\ne\maskx_{\nu_{I_f}^{I_e}(i),1}\cdot\maskx_{\nu_{I_f}^{I_e}(i),2}\}.
\end{gather}
\end{subequations}
Then $\pi(\Dwordd[\maskx])\in H\iprm(Q_{\maskx})H$ and hence $\prm(\pi(\Dwordd[\maskx]))=Q_{\maskx}$. In particular, the image of $\iprm$ is $\red_{\le\duble w}$.
\end{lemma}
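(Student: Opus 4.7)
The plan is to show $\pi(\Dwordd[\maskx]) \in H\iprm(Q_\maskx)H$ by transforming $\maskx$, via the moves of Lemma \ref{lem: sm}, into the canonical mask $\maskx^{\mathrm{can}}$ from the proof of Lemma \ref{lem: dblcst} that satisfies $\pi(\Dwordd[\maskx^{\mathrm{can}}]) = \iprm(Q_\maskx)$. Each move of Lemma \ref{lem: sm} either leaves $\pi(\Dwordd[\maskx])$ unchanged (parts 1(a) and 2) or multiplies it on the left or right by some $s_{2k\pm 1}$ (parts 1(b)--(e)). Since $H$ is generated by the odd-indexed simple reflections $s_1, s_3, \dots, s_{2n-1}$, these factors lie in $H$, so the $H$-double coset of $\pi(\Dwordd[\maskx])$ is preserved at every step.

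The first task is to verify that $Q_\maskx = (I_e, \rshft I \cup \lshft I, I_f)$ as defined in \eqref{eq: Imaskx} is itself invariant under every move of Lemma \ref{lem: sm}. Membership of $i$ in $I_e$ or $I_f$ depends only on $\maskx_i$, and a direct inspection of the allowed bit-changes shows they respect these sets. The conditions \eqref{eq: rshftI}--\eqref{eq: lshftI} pair bits at indices $i$ and $\nu_{I_f}(i)$ via a product; the applicability constraints on each move (e.g.\ $i \notin \rshft N_{I_f} \cap \nu_{I_f}^{-1}(I_e)$ in 1(b)) are calibrated so that the individual bit-changes do not alter that product, while the type 2 move flips bits at $i$ and $\nu_{I_f}(i)$ symmetrically, again preserving it.

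Next, I would show by induction on the number of ``non-canonical'' indices that any $\maskx$ can be reduced to $\maskx^{\mathrm{can}}(Q_\maskx)$. The normalization proceeds in two waves: first, at each $i \notin I_e$ drive $\maskx_i$ to $(0,0,0,0)$, using 1(a) if $\maskx_i = (1,0,0,1)$ and otherwise using 1(b)--(e) to clear the nonzero middle bits, whose applicability conditions are automatic because $i \notin I_e$ excludes the restricted neighbourhood sets; second, at each $i \in I_e \setminus I_f$ push $\maskx_i$ to its canonical pattern $(1,0,1,0)$, $(0,0,1,1)$, or $(0,0,0,1)$ according to whether $i \in \rshft I$, $i \in \lshft I$, or neither, using 1(b)--(e) and invoking move 2 whenever a bit flip at $i$ must be coordinated with one at $\nu_{I_f}(i)$. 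Indices in $I_f$ already satisfy $\maskx_i = (1,1,1,1)$ by definition.

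The main obstacle is the bookkeeping in the second wave: when adjusting $\maskx_i$ at $i \in N_{I_f}$ one must decide whether the required modification is a purely local type 1 move or a coupled type 2 move tying it to $\nu_{I_f}(i)$, and the formulas \eqref{eq: rshftI}--\eqref{eq: lshftI} are specifically engineered so that the required pairing is always consistent and the normalization terminates. Once $\maskx$ has been driven to $\maskx^{\mathrm{can}}(Q_\maskx)$ we have $\pi(\Dwordd[\maskx]) = \iprm(Q_\maskx)$, and combined with Lemma \ref{lem: dblcst} this yields $\prm(\pi(\Dwordd[\maskx])) = Q_\maskx$. The image statement then follows because \eqref{eq: bint} writes every $u \in \red_{\le \duble w}$ as $\pi(\Dwordd[\maskx])$ for some mask $\maskx$, whence $u$ lies in $H\iprm(Q_\maskx)H$; since $\iprm(Q_\maskx) \in \red$ by Lemma \ref{lem: dblcst} and each $H$-double coset contains a unique bi-$H$-reduced element, $u = \iprm(Q_\maskx)$ is in the image of $\iprm$.
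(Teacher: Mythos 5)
Your overall strategy coincides with the paper's: use the moves of Lemma \ref{lem: sm} (which either preserve $\pi(\Dwordd[\maskx])$ or multiply it by an odd reflection $s_{2k\pm1}\in H$), check that $Q_\maskx$ is unchanged by these moves, and normalize $\maskx$ to the mask of $\omegaw_{Q_\maskx}$; your deduction of the image statement from \eqref{eq: bint}, Lemma \ref{lem: dblcst} and uniqueness of the bi-$H$-reduced representative is also fine. The genuine gap is that the crux of the argument --- your ``second wave'' --- is asserted rather than proved. The coupled move of Lemma \ref{lem: sm}(2) is only available under the hypothesis $\maskx_{i_1,4}=\maskx'_{i_2,1}=0$ on the two blocks involved, a constraint you never mention; showing that it can always be arranged, and that the process terminates, is exactly where the work lies, and saying that \eqref{eq: rshftI}--\eqref{eq: lshftI} are ``specifically engineered'' so that the pairing is consistent is a restatement of the claim, not a proof. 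The paper resolves this in two stages: it first passes to a ``special'' representative in which each non-canonical pattern (such as $(1,*,1,0)$, $(0,*,1,1)$, $(1,1,*,0)$, $(0,1,*,1)$, and the split treatment of $(1,0,1,1)$ and $(1,1,0,1)$) is permitted only when $i$ lies in the appropriate set among $\rshft N_{I_f}^{I_e}$, $\lshft N_{I_f}^{I_e}$, $\nu_{I_f}^{I_e}(\lshft N_{I_f}^{I_e})$, $\nu_{I_f}^{I_e}(\rshft N_{I_f}^{I_e})$; it then removes the remaining bits $\maskx_{i,2}=1$ with $i\notin I_f$ by induction, choosing such an $i$ with $j_i$ minimal and, if necessary, first toggling the partner block between $(0,0,0,1)$ and $(1,0,0,0)$ so that the hypothesis of Lemma \ref{lem: sm}(2) is met. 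Some such intermediate normal form and ordering is needed; ``invoking move 2 whenever a bit flip at $i$ must be coordinated with one at $\nu_{I_f}(i)$'' does not by itself yield a terminating, always-applicable procedure.

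A smaller but real inaccuracy occurs in your first wave: you justify applying moves 1(b)--(e) at $i\notin I_e$ by saying that $i\notin I_e$ ``excludes the restricted neighbourhood sets''. It does not: the hypotheses of Lemma \ref{lem: sm}(1) concern whether $i$ belongs to $\rshft N_{I_f}$ or $\lshft N_{I_f}$ (neighbor sets with respect to the full index set) and whether $\nu_{I_f}(i)\in I_e$, none of which is implied by $i\notin I_e$. The step can be repaired, because the obstructions to the right- and left-multiplying variants are mutually exclusive ($\rshft N_{I_f}$ and $\lshft N_{I_f}$ are disjoint and $\nu_{I_f}$ is injective), so for each bit to be cleared at least one of the two moves is applicable; but as written the justification is incorrect, and it is symptomatic of the missing case analysis that the full argument requires.
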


\begin{proof}
Consider the graph $\graph_1$ whose vertex set consists of the $\Dwordd$-masks and the edges connect two $\Dwordd$-masks $\maskx,\maskx'\in(\{0,1\}^4)^l$ if
there exists $i=1,\dots,l$ such that $\maskx_j=\maskx'_j$ for all $j\ne i$ and one of the following conditions holds for some $\epsilon\in\{0,1\}$
(where $Q_\maskx=(I_e,\rshft{I}\cup\lshft{I},I_f)$):
\begin{enumerate}
\item $\maskx_i=(0,0,0,0)$ and either $\maskx'_i=(0,*,*,0)$ or $\maskx'_i=(1,0,0,1)$.
\item $\maskx_i=(1,0,0,0)$ and $\maskx'_i=(0,0,0,1)$.
\item $\maskx_i=(1,\epsilon,1,0)$, $\maskx'_i=(1,\epsilon,0,0)$ and $i\notin\rshft N_{I_f}^{I_e}$.
\item $\maskx_i=(0,\epsilon,1,1)$, $\maskx'_i=(0,\epsilon,0,1)$ and $i\notin\lshft N_{I_f}^{I_e}$.
\item $\maskx_i=(1,1,\epsilon,0)$, $\maskx'_i=(1,0,\epsilon,0)$ and $i\notin\nu_{I_f}^{I_e}(\lshft N_{I_f}^{I_e})$.
\item $\maskx_i=(0,1,\epsilon,1)$, $\maskx'_i=(0,0,\epsilon,1)$ and $i\notin\nu_{I_f}^{I_e}(\rshft N_{I_f}^{I_e})$.
\item $\maskx_i=(1,0,1,1)$ and $\maskx'_i=\begin{cases}(1,0,1,0)&i\in\rshft N_{I_f}^{I_e},\\
(0,0,1,1)&i\in\lshft N_{I_f}^{I_e},\\(0,0,0,1)&i\notin N_{I_f}^{I_e}.\end{cases}$
\item $\maskx_i=(1,1,0,1)$ and $\maskx'_i=\begin{cases}(0,1,0,1)&i\in\nu_{I_f}^{I_e}(\rshft N_{I_f}^{I_e}),\\
(1,1,0,0)&i\in\nu_{I_f}^{I_e}(\lshft N_{I_f}^{I_e}),\\(1,0,0,0)&i\notin\nu_{I_f}^{I_e}(N_{I_f}^{I_e}).\end{cases}$
\end{enumerate}
It follows from the first part of Lemma \ref{lem: sm} that the double coset $H\pi(\Dwordd[\maskx])H$ depends only on the $\graph_1$-connected component
of $\maskx$. It is also straightforward to check that $Q_{\maskx}$ depends only on the $\graph_1$-connected component of $\maskx$.

On the other hand, each $\graph_1$-connected component contains a representative $\maskx$ which satisfies the following conditions for all $i$
\begin{enumerate}
\item If $i\notin I_e$ then $\maskx_i=(0,0,0,0)$.
\item If $\maskx_i=(1,*,*,1)$ then $\maskx_i=(1,1,1,1)$, i.e., $i\in I_f$.
\item If $\maskx_i=(1,*,1,0)$ then $i\in\rshft N_{I_f}^{I_e}$.
\item \label{cond: 4} If $\maskx_i=(0,*,1,1)$ then $i\in\lshft N_{I_f}^{I_e}$.
\item If $\maskx_i=(1,1,*,0)$ then $i\in\nu_{I_f}^{I_e}(\lshft N_{I_f}^{I_e})$.
\item \label{cond: 6} If $\maskx_i=(0,1,*,1)$ then $i\in\nu_{I_f}^{I_e}(\rshft N_{I_f}^{I_e})$.
\end{enumerate}
We call such $\maskx$ ``special''. We will show that if $\maskx$ is special then $\pi(\Dwordd[\maskx])=\iprm(Q_\maskx)$,
thereby finishing the proof.

Consider a second graph $\graph_2$ with the same vertex set as $\graph_1$, where the edges are given by the condition in the second part of Lemma \ref{lem: sm}
as well as the condition that there exists $i$ such that $\maskx_j=\maskx'_j$ for all $j\ne i$ and $\maskx_i=(1,0,0,0)$, $\maskx'_i=(0,0,0,1)$.
Thus, $\pi(\Dwordd[\maskx])$ depends only on the $\graph_2$-connected component of $\maskx$
and once again, it is easy to verify that the same is true for $Q_{\maskx}$.
Note that a $\graph_2$-neighbor of a special $\Dwordd$-mask is also special.

We claim that the $\graph_2$-connected component of a special $\Dwordd$-mask $\maskx$ contains one which vanishes at all coordinates
$(i,2)$ for $i\notin I_f$.
We argue by induction on the number of indices $i\notin I_f$ such that $\maskx_{i,2}=1$.
For the induction step take such $i$ with $j_i$ minimal. Since $\maskx$ is special, by the first two conditions we have $\maskx_{i,1}+\maskx_{i,4}=1$.
Suppose for instance that $\maskx_{i,1}=0$ and $\maskx_{i,4}=1$. Then, by condition \ref{cond: 6} $i\in \nu_{I_f}^{I_e}(i_1)$ for some
$i_1\in\rshft N_{I_f}^{I_e}$. By minimality of $j_i$ we have $\maskx_{i_1,2}=0$. Also, by passing to a $\graph_2$-neighbor if necessary,
we may assume that $\maskx_{i_1}\ne(0,0,0,1)$. Then by condition \ref{cond: 4} we necessarily have $\maskx_{i_1,4}=1$ since $i_1\notin\lshft N_{I_f}^{I_e}$.
Thus, we can apply the induction hypothesis to the neighbor of $\maskx$ in $\graph_2$ which differs from it precisely
at the coordinates $(i,2)$ and $(i_1,3)$. The case $\maskx_{i,4}=0$ and $\maskx_{i,1}=1$ is similar.

Finally, if $\maskx$ is special and $\maskx_{i,2}=0$ for all $i\notin I_f$ then
$\Dwordd[\maskx]=\omegaw_{Q_{\maskx}}$ (see \eqref{def: omegaQ}) and hence $\pi(\Dwordd[\maskx])=\iprm(Q_{\maskx})$.
The lemma follows.
\end{proof}

\begin{example}
Consider the case $n=2$ and $w=s_1$ (so that $\wordd=\s_1$ and $\Dwordd=\s_2\s_1\s_3\s_2$). There are three $H$-double cosets. As representatives we can take
the identity, $s_2$ and $\duble{s_1}$. The corresponding triplets under $\prm$ are $(\emptyset,\emptyset,\emptyset)$,
$(\{1\},\emptyset,\emptyset)$ and $(\{1\},\emptyset,\{1\})$,
We have
\begin{gather*}
\pi(\wordd[\maskx])\in H\iff \maskx\in\{(0,*,*,0),(1,0,0,1)\},\\
\pi(\wordd[\maskx])\in Hs_2H\iff \maskx\in\{(1,*,*,0),(0,*,*,1),(1,0,1,1),(1,1,0,1)\},\\
\pi(\wordd[\maskx])\in H\duble{s_1}\iff \maskx=(1,1,1,1).
\end{gather*}
\end{example}

\begin{remark} \label{rem: passtoinv}
Consider the reduced decomposition $\wordd^\rev$ for $w^{-1}$.
Write $i^\rev=l+1-i$ and similarly for sets. Then for any $A\subset B\subset\{1,\dots,l\}$ we have $\,^{\wordd^\rev}\rshft N_{A^\rev}^{B^\rev}=(\,^\wordd\lshft N_A^B)^\rev$,
$\,^{\wordd^\rev}\lshft N_{A^\rev}^{B^\rev}=(\,^\wordd\rshft N_A^B)^\rev$,  $\,^{\wordd^\rev}N_{A^\rev}^{B^\rev}=(\,^\wordd N_A^B)^\rev$
and $\,^{\wordd^\rev}\nu_{A^\rev}^{B^\rev}(i)=\,^\wordd\nu_A^B(i^\rev)$.
Moreover, the following diagram is commutative
\[
\begin{CD}
\Quadr^\wordd @>{(I_e,I,I_f)\mapsto(I_e^\rev,I^\rev,I_f^\rev)}>>  \Quadr^{\wordd^\rev}\\
@VV{\iprm^\wordd}V  @VV{\iprm^{\wordd^\rev}}V\\
\red_{\le\duble w} @>{\ \ \ \ \ \ u\mapsto u^{-1}\ \ \ \ \ \ }>> \red_{\le\duble w^{-1}}
\end{CD}
\]
\end{remark}

\section{The main result}
Finally, we prove the main result of the paper.
Recall that $w\in S_n$ is a fixed Boolean permutation with reduced decomposition $\wordd=\s_{j_1}\dots\s_{j_l}$ (with $j_1,\dots,j_l$ distinct).
\begin{proposition} \label{prop: mainprop}
For any $Q=(I_e,I,I_f)\in\Quadr$ we have
\begin{multline} \label{eq: altsum}
\sum_{u\in H\iprm(Q)H}\sgn u\, P_u^{\Dwordd}=
\sum_{\maskx\in(\{0,1\}^4)^l:\prm(\pi(\Dwordd[\maskx]))=Q}\sgn\maskx\, q^{\dfct_{\Dwordd}(\maskx)}\\=
(-1)^{\card{I_e\setminus I_f}}
q^{\card{I_e^c}+\card{N_{I_f}^{I_e}\setminus I}}(q+1)^{\card{I_e\setminus (I_f\cup N_{I_f}^{I_e})}}.
\end{multline}
In particular, 
\begin{equation} \label{eq: altsum0}
\sum_{\maskx\in(\{0,1\}^4)^l:\prm(\pi(\Dwordd[\maskx]))=Q}\sgn\maskx=
2^{\card{I_e\setminus (I_f\cup N_{I_f}^{I_e})}}(-1)^{\card{I_e\setminus I_f}}.
\end{equation}
\end{proposition}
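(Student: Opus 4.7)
The plan is to prove \eqref{eq: altsum} in two stages---first the equality between the left-hand side and the middle sum over masks, then the evaluation of that middle sum---and then to obtain \eqref{eq: altsum0} by specializing $q = 1$.

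For the first equality I would assemble three ingredients already in place. Since $\duble w$ is tight by Corollary \ref{cor: dtight}, Theorem \ref{thm: Deodhar} identifies $P_{u,\duble w}$ with $P_u^{\Dwordd} = \sum_{\maskx:\pi(\Dwordd[\maskx])=u} q^{\dfct_{\Dwordd}(\maskx)}$. Lemma \ref{lem: descoset} gives $\pi(\Dwordd[\maskx]) \in H\iprm(Q_\maskx)H$, so a mask $\maskx$ contributes to $\sum_{u \in H\iprm(Q)H} \sgn u\, P_u^{\Dwordd}$ precisely when $Q_\maskx = Q$. Finally, the length identity $\ell(\pi(\Dwordd[\maskx])) = \card{\ndfcts^1_{\Dwordd}(\maskx)} - \card{\dfcts^1_{\Dwordd}(\maskx)}$ has the same parity as $\card{\ndfcts^1_{\Dwordd}(\maskx)} + \card{\dfcts^1_{\Dwordd}(\maskx)}$, the total number of $1$'s in $\maskx$; hence $\sgn(\pi(\Dwordd[\maskx])) = \sgn \maskx$, and combining the three ingredients yields the first equality.

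For the second equality I would compute $\sum_{\maskx : Q_\maskx = Q} \sgn \maskx\, q^{\dfct_{\Dwordd}(\maskx)}$ by direct enumeration. Lemma \ref{lem: descoset} (equations \eqref{eq: Imaskx}) characterizes the contributing masks: $\maskx_i = (1,1,1,1)$ is forced for $i \in I_f$; $\maskx_i$ is one of five values (the four $(0,*,*,0)$ and $(1,0,0,1)$) for $i \in I_e^c$; and $\maskx_i$ takes one of the remaining ten values for $i \in I_e \setminus I_f$, subject to the product constraints \eqref{eq: rshftI}--\eqref{eq: lshftI} linking $\maskx_i$ with $\maskx_{\nu_{I_f}^{I_e}(i)}$ at each $i \in N_{I_f}^{I_e}$. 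Meanwhile Lemma \ref{lem: detdfct} expresses $\dfct_{\Dwordd}(\maskx)$ block by block in terms of $\maskx_i$ and the conditions $C(\maskx,i)$, $\lshft{C}(\maskx,i)$. The target factors as a product of per-index local contributions $T_i$ with $T_i = 1, q, -1, -q, -(q+1)$ according to whether $i$ lies in $I_f$, $I_e^c$, $I$, $N_{I_f}^{I_e}\setminus I$ or $I_e \setminus (I_f \cup N_{I_f}^{I_e})$ respectively. The remaining task is to verify $\sum_{\maskx:Q_\maskx=Q} \sgn \maskx\, q^{\dfct_{\Dwordd}(\maskx)} = \prod_{i=1}^l T_i$; the case $l = 1$ (where $N_{I_f}^{I_e}$ is empty and $C, \lshft{C}$ vacuously fail) already exhibits the correct values $T_1 \in \{1,q,-(q+1)\}$ in the three possible cases of $Q$, providing a sanity check.

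The main obstacle is the non-locality of $\dfct_{\Dwordd}$: through the conditions $C$ and $\lshft{C}$, the defect contributions at block $i$ depend on $\maskx$ at the neighboring blocks $\nu_{I_f}(i)$ or $\nu_{I_f}^{-1}(i)$, so the sum does not factor position-by-position at the mask level. Overcoming this requires organizing the enumeration around the paired structure of $N_{I_f}^{I_e}$: for each $i \in N_{I_f}^{I_e}$ one sums jointly over the admissible configurations $(\maskx_i, \maskx_{\nu_{I_f}^{I_e}(i)})$ satisfying \eqref{eq: rshftI} or \eqref{eq: lshftI}, and then verifies via Lemma \ref{lem: detdfct} that the cross-block defect contributions combine with the sign to produce exactly the product $T_i\, T_{\nu_{I_f}^{I_e}(i)}$ of local factors. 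This is a routine but lengthy case analysis. Once \eqref{eq: altsum} is in hand, \eqref{eq: altsum0} follows immediately on setting $q = 1$, since then $q^{\dfct_{\Dwordd}(\maskx)}$ and $q^{\card{I_e^c} + \card{N_{I_f}^{I_e} \setminus I}}$ both become $1$ while $(q+1)^{\card{I_e \setminus (I_f \cup N_{I_f}^{I_e})}}$ becomes $2^{\card{I_e \setminus (I_f \cup N_{I_f}^{I_e})}}$.
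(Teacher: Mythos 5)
Your handling of the first equality and of the $q=1$ specialization is correct and essentially the paper's own argument (tightness and Theorem \ref{thm: Deodhar} are not even needed there: the first equality is just the definition of $P^{\Dwordd}_u$ together with Proposition \ref{prop: paramdc}/Lemma \ref{lem: descoset} and the identity $\sgn\pi(\Dwordd[\maskx])=\sgn\maskx$). The gap is in the second equality, which is the heart of the proposition. You propose to factor the mask sum into local factors $T_i$ and to absorb the non-locality of the defect by summing jointly over the configurations $(\maskx_i,\maskx_{\nu_{I_f}^{I_e}(i)})$ for $i\in N_{I_f}^{I_e}$. As described, this fails for two concrete reasons. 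First, the conditions $C(\maskx,i)$ and $\lshft{C}(\maskx,i)$ of Lemma \ref{lem: detdfct} are formulated with respect to $N_{I_f}$ and $\nu_{I_f}$ relative to the \emph{whole} index set, not $N_{I_f}^{I_e}$: a block outside $I_e$ (allowed values $(0,*,*,0)$, $(1,0,0,1)$) with value $(0,1,*,0)$ triggers $C(\maskx,\cdot)$ at its $\nu_{I_f}$-preimage, and blocks of $I_e\setminus(I_f\cup N_{I_f}^{I_e})$ can likewise be coupled; so your local values $q$ and $-(q+1)$ are only the uncoupled answers and the couplings are not confined to the $N_{I_f}^{I_e}$-pairs. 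Second, the ``pairs'' overlap: already for $w=s_1s_2s_3$, $\wordd=\s_1\s_2\s_3$ and $Q$ with $I_f=\emptyset$, $I_e=\{1,2,3\}$ one has $\nu_{I_f}(1)=2$ and $\nu_{I_f}(2)=3$, so the proposed joint sums over $(\maskx_1,\maskx_2)$ and $(\maskx_2,\maskx_3)$ share block $2$; in general the dependencies form chains (the equivalence classes of Remark \ref{rem: equ}), and the sum does not decompose into independent pairwise sums.

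Thus the assertion that a ``routine but lengthy case analysis'' verifies $\sum\sgn\maskx\,q^{\dfct_{\Dwordd}(\maskx)}=\prod_i T_i$ is exactly the missing content. The paper supplies the mechanism you would need: it argues by induction on $l$, removing the letter $\s_{j_{i_0}}$ with $j_{i_0}$ maximal in $I_f^c$ (after possibly replacing $w$ by $w^{-1}$, via \eqref{eq: Pinv} and Remark \ref{rem: passtoinv}, to ensure $i_0\notin\nu_{I_f}(\lshft{N}_{I_f})$), cancels most values of $\maskx_{i_0}$ by explicit sign-reversing involutions justified by the first part of Corollary \ref{cor: changed}, compares defects with the shorter word via the second part of Corollary \ref{cor: changed}, and, when $i_0=\nu_{I_f}^{I_e}(i_1)$, splits the sum according to $\maskx_{i_1,1}\cdot\maskx_{i_1,3}$ to extract the factors $-1$ and $-q$; this establishes the recursion \eqref{eq: recLwQ}, from which the product formula follows. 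Some device of this kind (stripping the end of a chain inductively, or a transfer-matrix argument along chains) is indispensable; without it, and with the pairwise factorization as stated, your proof of the second equality does not go through.
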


By Theorem \ref{thm: Deodhar} and Corollary \ref{cor: dtight} we infer
\begin{corollary} \label{cor: maincor}
For any $Q=(I_e,I,I_f)\in\Quadr$ we have
\[
\sum_{u\in H\iprm(Q)H}\sgn u\, P_{u,\duble w}=(-1)^{\card{I_e\setminus I_f}}q^{\card{I_e^c}+\card{N_{I_f}^{I_e}\setminus I}}
(q+1)^{\card{I_e\setminus (I_f\cup N_{I_f}^{I_e})}}.
\]
In particular, by \eqref{eq: qtau}, for any $x\le w$
\[
\tilde P^{(2)}_{x,w}=\sum_{u\in H\duble x}\sgn u\, P_{u,\duble w}=q^{\ell(w)-\ell(x)}.
\]
\end{corollary}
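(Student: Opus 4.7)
The first equality in \eqref{eq: altsum} is a re-indexing: expanding $P_u^{\Dwordd}$ via its definition as a sum over masks and swapping the order of summation, the left-hand side becomes $\sum_{\maskx} \sgn(\pi(\Dwordd[\maskx]))\, q^{\dfct_{\Dwordd}(\maskx)}$ over masks with $\pi(\Dwordd[\maskx]) \in H\iprm(Q)H$. By Lemma \ref{lem: descoset}, combined with the injectivity of $\iprm$ from Lemma \ref{lem: dblcst}, this condition is equivalent to $Q_{\maskx} = Q$; and $\sgn(\pi(\Dwordd[\maskx])) = \sgn\maskx$ because $\ell(\wordd[\maskx])$ has the same parity as $\card{\{i:\maskx_i=1\}}$.

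For the second equality, the plan is to exhibit the claimed product structure via a careful case analysis, organized by index type. Using \eqref{eq: Imaskx} to describe the allowed masks at each block in terms of $Q$, and Lemma \ref{lem: detdfct} to read off $\dfct_{\Dwordd}(\maskx)$ block-by-block, one partitions $\{1,\ldots,l\}$ into four classes: $I_f$ (where $\maskx_i = (1,1,1,1)$ is forced), $I_e^c$ (five allowed values of $\maskx_i$), $N_{I_f}^{I_e}$ (ten values, constrained by the $I$-condition that couples $\maskx_i$ with $\maskx_{\nu_{I_f}^{I_e}(i)}$), and $I_e\setminus(I_f\cup N_{I_f}^{I_e})$ (ten values, free of the $I$-constraint).

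The expected local contributions are $1$ for $i \in I_f$, $q$ for $i \in I_e^c$, $-1$ for $i \in I$, $-q$ for $i \in N_{I_f}^{I_e}\setminus I$, and $-(q+1)$ for $i \in I_e\setminus(I_f\cup N_{I_f}^{I_e})$; multiplying these factors reproduces the claimed right-hand side. Verifying this factorization is the core of the proof, and the main obstacle, because the defect conditions $C(\maskx,i), \lshft C(\maskx,i)$ in Lemma \ref{lem: detdfct} couple $\maskx_i$ to $\maskx_{\nu_{I_f}(i)}$ and $\maskx_{\nu_{I_f}^{-1}(i)}$, so the sum does not factor in an obvious way. The plan is to group the mask sum by ``$\nu_{I_f}$-orbits'' (singletons, chains, or two-cycles) and verify that each orbit-sum telescopes into a product of the local factors at its constituent indices. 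In practice this reduces to a finite family of explicit identities, enumerated by the mask-value patterns $(1,0,1,1), (*,*,1,0), (1,1,0,1), (*,1,*,0)$ that appear in the definitions of $C$ and $\lshft C$ and in \eqref{eq: rshftI}--\eqref{eq: lshftI}; the cancellations they produce, when summed against $\sgn\maskx$, must be matched carefully.

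Corollary \ref{cor: maincor} then follows from Proposition \ref{prop: mainprop} together with Theorem \ref{thm: Deodhar} applied using Corollary \ref{cor: dtight} (tightness of $\duble w$), which identifies $P_u^{\Dwordd}$ with $P_{u,\duble w}$. For the final identity, one specializes to $Q = (I_x, \emptyset, I_x)$ for $x \le w$: since $I_e = I_f = I_x$, both $N_{I_f}^{I_e}$ and $I_e\setminus(I_f\cup N_{I_f}^{I_e})$ are empty, so the formula collapses to $q^{\card{I_e^c}} = q^{\ell(w)-\ell(x)}$. By \eqref{eq: qtau}, $\iprm(Q) = \duble x$, and since $\duble x$ normalizes $H$ we have $H\iprm(Q)H = H\duble x$, which yields the displayed identity for $\tilde P^{(2)}_{x,w}$.
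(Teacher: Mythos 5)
Your framework is right and matches the paper: the first equality of \eqref{eq: altsum} via Proposition \ref{prop: paramdc} (Lemmas \ref{lem: dblcst} and \ref{lem: descoset}) and $\sgn\pi(\Dwordd[\maskx])=\sgn\maskx$; the passage from $P^{\Dwordd}_u$ to $P_{u,\duble w}$ via Theorem \ref{thm: Deodhar} and Corollary \ref{cor: dtight}; and the specialization $Q=(I_x,\emptyset,I_x)$, where $N_{I_f}^{I_e}=\emptyset$ and $H\iprm(Q)H=H\duble x$, giving $q^{\card{I_e^c}}=q^{\ell(w)-\ell(x)}$. Your predicted local factors ($q$ for $i\in I_e^c$, $-(q+1)$ for $i\in I_e\setminus(I_f\cup N_{I_f}^{I_e})$, $-1$ for $i\in I$, $-q$ for $i\in N_{I_f}^{I_e}\setminus I$) are exactly the factors in the paper's recursion \eqref{eq: recLwQ}.

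However, the core of the proof --- the second equality of \eqref{eq: altsum} --- is left as a plan (``verify that each orbit-sum telescopes'', ``must be matched carefully''), and this is a genuine gap, not a routine verification. Two concrete obstacles are not addressed. First, your four-class decomposition by $N_{I_f}^{I_e}$ does not localize the defect statistic: by Lemma \ref{lem: detdfct} the conditions $C(\maskx,i)$ and $\lshft C(\maskx,i)$ couple blocks along $N_{I_f}$ (an index $i\in I_e\setminus(I_f\cup N_{I_f}^{I_e})$ may still lie in $N_{I_f}$ or in $\nu_{I_f}(N_{I_f})$ when the relevant partner is outside $I_e$), so the sum does not split over your classes or over $\nu_{I_f}$-orbits without further argument. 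Second, even along a single chain the left/right asymmetry of $C$ versus $\lshft C$ matters. The paper handles both points by induction on $l$: it removes the index $i_0\notin I_f$ with $j_{i_0}$ maximal (which forces $i_0\notin N_{I_f}$), and, crucially, first reduces to the case $i_0\notin\nu_{I_f}(\lshft N_{I_f})$ by replacing $w$ with $w^{-1}$, using \eqref{eq: Pinv}, Remark \ref{rem: passtoinv} and the fact that the corollary (which is inversion-invariant) is equivalent to the proposition via Theorem \ref{thm: Deodhar}; it then constructs explicit sign-reversing involutions on the values of $\maskx_{i_0}$ to kill most terms and uses Corollary \ref{cor: changed} to compare defects with the shortened word $\wordd'$, yielding \eqref{eq: recLwQ}. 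None of these ingredients (the choice of $i_0$, the inversion trick, the involutions, the defect-comparison) appear in your proposal, so the multiplicative formula you need is asserted rather than proved.
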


\begin{proof}
The first equality of \eqref{eq: altsum} follows from Proposition \ref{prop: paramdc}.
We prove the second one by induction on $l$.
The case $l=0$ is trivial -- both sides of \eqref{eq: altsum} are equal to $1$.
Suppose that $l>0$ and the result is known for $l-1$.

If $I_f=\{1,\dots,l\}$ (so that $Q=(\{1,\dots,l\},\emptyset,\emptyset,\{1,\dots,l\})$) then the only summand
on the left-hand side of \eqref{eq: altsum} is the one corresponding to $\maskx_i=(1,1,1,1)$ for all $i$
and the result is trivial.

We may therefore assume that $I_f\ne\{1,\dots,l\}$.
For convenience, denote the left-hand side of \eqref{eq: altsum} by $L^\wordd_Q$ and let
\[
M_Q^\wordd=\{\maskx\in(\{0,1\}^4)^l:\prm(\pi(\Dwordd[\maskx]))=Q\}
\]
which is explicated in Lemma \ref{lem: descoset}.
Let $i_0$ be the element of $I_f^c$ for which $j_{i_0}$ is maximal.
In particular, $\maskx_{i_0}\ne(1,1,1,1)$ for any $\maskx\in M_Q^\wordd$.
Clearly $i_0\notin N_{I_f}$, otherwise $j_{\nu_{I_f}(i_0)}>i_0$.
Note that by  \eqref{eq: Pinv} and Remark \ref{rem: passtoinv}, the statement of Corollary \ref{cor: maincor} is invariant under $w\mapsto w^{-1}$
(and $\wordd\mapsto\wordd^\rev$).
On the other hand, Corollary \ref{cor: maincor} is equivalent to Proposition \ref{prop: mainprop} by Theorem \ref{thm: Deodhar}.
Therefore, upon inverting $w$ if necessary we may assume that
\begin{equation} \label{eq: notonleft}
i_0\notin\nu_{I_f}(\lshft N_{I_f}).
\end{equation}
In particular, we can apply Corollary \ref{cor: changed}.

Let $\wordd'$ be the word obtained from $\wordd$ by removing $\s_{j_{i_0}}$ and let 
$Q'=(I_e',I',I_f')\in\Quadr^{\wordd'}$ where $I'_e=I_e\setminus\{i_0\}$, $I'=I\setminus(\nu_{I_f}^{I_e})^{-1}(\{i_0\})$ and $I_f'=I_f$.
Note that $\,^{\wordd'}N_{I_f'}^{I_e'}=N_{I_f}^{I_e}\setminus(\nu_{I_f}^{I_e})^{-1}(\{i_0\})$.
(For simplicity we suppress $\wordd$ if the notation is pertaining to it.)

To carry out the induction step we show using Lemmas \ref{lem: detdfct} and \ref{lem: descoset} that
\begin{equation} \label{eq: recLwQ}
L^\wordd_Q=L^{\wordd'}_{Q'}\times\begin{cases}q&i_0\notin I_e,\\
-(q+1)&i_0\in I_e\setminus\nu_{I_f}^{I_e}(N_{I_f}^{I_e}),\\
-1&i_0\in\nu_{I_f}^{I_e}(I),\\
-q&i_0\in\nu_{I_f}^{I_e}(N_{I_f}^{I_e}\setminus I).\end{cases}
\end{equation}

We separate into cases.
\begin{enumerate}
\item Assume first that $i_0\notin\nu_{I_f}^{I_e}(N_{I_f}^{I_e})$ (the first two cases on the right-hand side of \eqref{eq: recLwQ}).
In this case, in order for $\maskx$ to belong to $M_Q^\wordd$, the conditions \eqref{eq: rshftI} and \eqref{eq: lshftI} are independent of $\maskx_{i_0}$.

We claim that
\begin{equation} \label{eq: onlyspec}
L^\wordd_Q=\sum_{\maskx\in M_Q^\wordd:\maskx_{i_0}\in \{(1,0,0,0),(1,0,0,1),(1,1,1,0)\}}\sgn\maskx\, q^{\dfct_{\Dwordd}(\maskx)},
\end{equation}
i.e., that
\begin{equation} \label{eq: sum0}
\sum_{\maskx\in R_{i_0}}\sgn\maskx\, q^{\dfct_{\Dwordd}(\maskx)}=0
\end{equation}
where
\[
R_{i_0}:=\{\maskx\in M_Q^\wordd:\maskx_{i_0}\ne (1,0,0,*),(1,1,1,*)\}.
\]
We define an involution $\iota$ on $R_{i_0}$ by retaining $\maskx_i$ for $i\ne i_0$ and changing $\maskx_{i_0}$
according to the rule
\begin{gather*}
(0,0,0,0)\leftrightarrow (0,0,1,0),\ \ (0,1,0,0)\leftrightarrow(0,1,1,0),\ \ (1,0,1,0)\leftrightarrow (1,0,1,1)
,\\(0,0,0,1)\leftrightarrow (0,0,1,1),\ \ (0,1,0,1)\leftrightarrow(0,1,1,1),\ \
(1,1,0,0)\leftrightarrow(1,1,0,1).
\end{gather*}
This is well defined since the condition $\maskx_{i_0}=(0,*,*,0)$ is invariant under the above rule.
By the first part of Corollary \ref{cor: changed} $\iota$ preserves $\dfct_{\Dwordd}$.
Since $\sgn\iota(\maskx)=-\sgn\maskx$, the assertion \eqref{eq: sum0} follows.

\begin{enumerate}
\item Suppose that $i_0\notin I_e$.
Then by \eqref{eq: onlyspec}
\[
L^\wordd_Q=\sum_{\maskx\in M_Q^\wordd:\maskx_{i_0}=(1,0,0,1)}\sgn\maskx\, q^{\dfct_{\Dwordd}(\maskx)}
\]
and therefore by the second part of Corollary \ref{cor: changed}
\[
L^\wordd_Q=qL^{\wordd'}_{Q'}.
\]
\item Similarly, if $i_0\in I_e\setminus\nu_{I_f}^{I_e}(N_{I_f}^{I_e})$ then
\[
L^\wordd_Q=\sum_{\maskx\in M_Q^\wordd:\maskx_{i_0}\in \{(1,0,0,0),(1,1,1,0)\}}\sgn\maskx\, q^{\dfct_{\Dwordd}(\maskx)}
\]
and we get
\[
L^\wordd_Q=-(q+1)L^{\wordd'}_{Q'}.
\]
\end{enumerate}

\item Consider now the case $i_0\in\nu_{I_f}^{I_e}(N_{I_f}^{I_e})$ (the last two cases on the right-hand side of \eqref{eq: recLwQ}).
In particular, $i_0\in I_e\setminus I_f$ so that
\[
\maskx_{i_0}\in\{(1,*,*,0),(0,*,*,1),(1,0,1,1),(1,1,0,1)\}
\]
for any $\maskx\in M_Q^\wordd$.

Let $i_1=(\nu_{I_f}^{I_e})^{-1}(i_0)$. By \eqref{eq: notonleft} $i_1\in\rshft{N}_{I_f}^{I_e}$.
We write $L^\wordd_Q=T_0+T_1$ where
\[
T_j=\sum_{\maskx\in M_Q^\wordd:\maskx_{i_1,1}\cdot\maskx_{i_1,3}=j}\sgn\maskx\, q^{\dfct_{\Dwordd}(\maskx)},\ \ j=0,1.
\]
We first claim that
\begin{equation} \label{eq: T_jrest}
T_j=\sum_{\substack{\maskx\in M_Q^\wordd:\maskx_{i_1,1}\cdot\maskx{i_1,3}=j\text{ and }\\
\maskx_{i_0}\in\{(1,0,0,0),(1,1,1,0),(1,1,0,0),(1,1,0,1)\}}}\sgn\maskx\, q^{\dfct_{\Dwordd}(\maskx)},
\end{equation}
or in other words, that
\[
\sum_{\maskx\in R_j}\sgn\maskx\, q^{\dfct_{\Dwordd}(\maskx)}=0
\]
where
\[
R_j:=\{\maskx\in M_Q^\wordd:\maskx_{i_1,1}\cdot\maskx_{i_1,3}=j\text{ and }
\maskx_{i_0}\notin\{(1,0,0,0),(1,1,1,0),(1,1,0,0),(1,1,0,1)\}\}.
\]
As before, we define $\iota$ on $R_j$ by keeping $\maskx_i$ for all $i\ne i_0$ and changing $\maskx_{i_0}$ according to the rule
\[
(0,0,0,1)\leftrightarrow(0,0,1,1),\ (0,1,0,1)\leftrightarrow(0,1,1,1),\ (1,0,1,0)\leftrightarrow(1,0,1,1).
\]
This is well defined since $\iota$ preserves $\maskx_{i_0,2}\cdot\maskx_{i_0,4}$.
By the first part of Corollary \ref{cor: changed} $\iota$ preserves $\dfct_{\Dwordd}$. Since
$\sgn\iota(\maskx)=-\sgn\maskx$, the assertion follows.

\begin{enumerate}
\item Suppose that $i_1\notin I$. Then by \eqref{eq: T_jrest} and \eqref{eq: rshftI}
\[
T_0=\sum_{\substack{\maskx\in M_Q^\wordd:\maskx_{i_1,1}\cdot\maskx_{i_1,3}=0\text{ and }\\
\maskx_{i_0}\in\{(1,0,0,0),(1,1,1,0),(1,1,0,0)\}}}\sgn\maskx\, q^{\dfct_{\Dwordd}(\maskx)}
\]
and
\[
T_1=\sum_{\maskx\in M_Q^\wordd:\maskx_{i_1,1}\cdot\maskx_{i_1,3}=1\text{ and }\maskx_{i_0}=(1,1,0,1)}\sgn\maskx\, q^{\dfct_{\Dwordd}(\maskx)}.
\]
Thus, using the second part of Corollary \ref{cor: changed} the contributions from $\maskx_{i_0}=(1,1,1,0)$ and $\maskx_{i_0}=(1,1,0,0)$ cancel
and we get
\[
T_j=(-q)\times\sum_{\maskx'\in M_{Q'}^{\wordd'}:\maskx'_{i_1,1}\cdot\maskx'_{i_1,3}=j}\sgn\maskx'\, q^{\dfct_{\Dwordd'}(\maskx')},\ \ j=0,1.
\]
Hence,
\[
T_0+T_1=(-q)\times\sum_{\maskx'\in M_{Q'}^{\wordd'}}\sgn\maskx'\, q^{\dfct_{\Dwordd'}(\maskx')}.
\]
\item Similarly, if $i_1\in I$ (hence $i_1\in\rshft I$) then
\[
T_0=\sum_{\maskx\in M_Q^\wordd:\maskx_{i_1,1}\cdot\maskx_{i_1,3}=0\text{ and }\maskx_{i_0}=(1,1,0,1)}\sgn\maskx\, q^{\dfct_{\Dwordd}(\maskx)},
\]
and
\[
T_1=\sum_{\substack{\maskx\in M_Q^\wordd:\maskx_{i_1,1}\cdot\maskx_{i_1,3}=1\text{ and }\\\maskx_{i_0}\in\{(1,0,0,0),(1,1,1,0),(1,1,0,0)\}}}
\sgn\maskx\, q^{\dfct_{\Dwordd}(\maskx)}.
\]
The contributions from $\maskx_{i_0}=(1,0,0,0)$ and $\maskx_{i_0}=(1,1,0,0)$ cancel and we obtain
\[
T_j=-\sum_{\maskx'\in M_{Q'}^{\wordd'}:\maskx'_{i_1,1}\cdot\maskx'_{i_1,3}=j}\sgn\maskx'\, q^{\dfct_{\Dwordd'}(\maskx')},\ \ j=0,1
\]
and hence
\[
T_0+T_1=-\sum_{\maskx'\in M_{Q'}^{\wordd'}}\sgn\maskx'\, q^{\dfct_{\Dwordd'}(\maskx')}.
\]
\end{enumerate}
\end{enumerate}
Thus, we established \eqref{eq: recLwQ} in all cases.
The induction step now follows from the induction hypothesis.
\end{proof}

\section{Complements}

In conclusion, we relate the result of the previous section to the results of \cite[\S10]{1605.08545}.
We continue to assume that $w$ and $\wordd$ are as in \S\ref{sec: ddc}.

\begin{lemma} \label{lem: ncyc}
Let $x_1,x_2\le w$, $I_e=I_{x_1}\cup I_{x_2}$ and $I_f=I_{x_1}\cap I_{x_2}$.
Then the number of non-trivial cycles of the permutation $x_2^{-1}x_1$ is $\card{I_e\setminus(I_f\cup N_{I_f}^{I_e})}$.
\end{lemma}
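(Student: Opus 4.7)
The plan is to induct on $l = \ell(w)$. The base case $l = 0$ is immediate since then $x_1 = x_2 = e$ and both sides vanish.

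For the inductive step, I focus on the last position $l$ of $\wordd$, passing to $\wordd' = \s_{j_1}\cdots\s_{j_{l-1}}$ and $w' = \pi(\wordd')$. If $l \notin I_e$ then the triple $(I_e, I_f, N)$ is unchanged when computed with respect to $\wordd'$: no chain touches $l$, because its starting position and endpoint lie in $I_e \setminus I_f$ and its intermediaries in $I_f$, all subsets of $I_e$. Thus the induction hypothesis applies directly to $(x_1, x_2) \le w'$. If $l \in I_f$, writing $x_r = x_r' s_{j_l}$ for $r = 1, 2$ (so that $x_r' \le w'$) yields $y = s_{j_l}\, y'\, s_{j_l}$ with $y' = (x_2')^{-1} x_1'$, whose cycle structure matches that of $y$ by conjugation; moreover, since $l$ is maximal in $\wordd$ it cannot serve as an intermediate in any chain nor as an endpoint of a $\lshft{N}$-chain (the latter's endpoint lies strictly to the left of its starting position), and since $l \in I_f$ it is not a $\rshft{N}$-endpoint either. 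Hence $N$ and $N'$ agree on $I_e \setminus I_f = I_e' \setminus I_f'$, and induction applies.

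The substantive case is $l \in I_e \setminus I_f$. By the symmetry $x_1 \leftrightarrow x_2$ (which replaces $y$ by $y^{-1}$, preserving the cycle count), I may assume $l \in I_{x_1}$, so $x_1 = x_1' s_{j_l}$ with $x_1' \le w'$ and $y = y' s_{j_l}$ with $y' = x_2^{-1} x_1'$. Right-multiplication by $s_{j_l}$ swaps the values of $y'$ at positions $j_l$ and $j_l + 1$ in one-line notation, altering the number of cycles by $\pm 1$. Setting $I_e' = I_e \setminus \{l\}$, $I_f' = I_f$, and $\delta_1 := [l \in \lshft{N}]$, $\delta_2 := [l \in \nu(\rshft{N})]$, a direct comparison with the definitions shows that $N'$ differs from $N$ on $(I_e \setminus I_f) \setminus \{l\}$ precisely by removal of $l$ (when $\delta_1 = 1$) and of the unique $i_0 \in \rshft{N}$ with $\nu(i_0) = l$ (when $\delta_2 = 1$), so
\[
|I_e\setminus(I_f\cup N)| - |I_e'\setminus(I_f'\cup N')| \;=\; 1 - \delta_1 - \delta_2 \;\in\; \{-1, 0, 1\}.
\]

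The main obstacle is matching this combinatorial difference with the change in the number of \emph{non-trivial} cycles of $y$ relative to $y'$. The key claim to prove is that $\delta_2 = 0$ iff $j_l$ is fixed by $y'$, $\delta_1 = 0$ iff $j_l + 1$ is fixed by $y'$, and when $\delta_1 = \delta_2 = 1$ the elements $j_l, j_l + 1$ lie in distinct cycles of $y'$. Granting this, the three possible values of $(\delta_1, \delta_2)$ correspond to: both positions fixed (multiplication by $s_{j_l}$ creates a $2$-cycle, adding one non-trivial cycle, matching $+1$), exactly one fixed (the fixed point is absorbed into the other's non-trivial cycle, matching $0$), and neither fixed but in distinct cycles (merging removes one non-trivial cycle, matching $-1$). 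Establishing these correspondences, which constitutes the bulk of the combinatorial work, proceeds by tracing $y'(j_l)$ and $y'(j_l+1)$ through $\wordd$ and exploiting the boolean (fully commutative) structure of $w$ to control the action of $x_1'$ and $x_2$ near positions $j_l$ and $j_l + 1$.
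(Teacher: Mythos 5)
Your route is genuinely different from the paper's: the paper inducts on $\card{I_f}$, repeatedly cancelling or conjugating away the element of $I_f$ with minimal $j$-value (with an explicit conjugating permutation $u$ in the hardest subcase), whereas you induct on $\ell(w)$ and strip the last letter of $\wordd$. Your bookkeeping for the easy cases is sound: if $l\notin I_e$ or $l\in I_f$ the triple and the neighbor set are unchanged (and $x_2^{-1}x_1$ is unchanged, resp.\ conjugated), and in the case $l\in I_e\setminus I_f$ the identity $\card{I_e\setminus(I_f\cup N_{I_f}^{I_e})}-\card{I_e'\setminus(I_f'\cup N')}=1-\delta_1-\delta_2$ is correct, since $l$ can only enter $N_{I_f}^{I_e}$ as a left-chain start and can only be hit by $\nu_{I_f}^{I_e}$ as a right-chain endpoint, and removing the last letter destroys exactly those two chains and creates no new ones.

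The genuine gap is that the ``key claim'' --- $\delta_2=0$ iff $y'$ fixes $j_l$, $\delta_1=0$ iff $y'$ fixes $j_l+1$, and the distinct-cycle statement --- is exactly the content of the lemma in this setting, and you do not prove it: you state it and say in one sentence that it ``proceeds by tracing $y'(j_l)$ and $y'(j_l+1)$ through $\wordd$.'' The third part is actually easy and you should say why: since $l\notin I_{x_2}$ and $l\notin I_{x_1'}$, neither factor involves $s_{j_l}$, so $y'$ lies in $S_{\{1,\dots,j_l\}}\times S_{\{j_l+1,\dots,n\}}$ and $j_l$, $j_l+1$ can never share a cycle (this also rules out the split case $y'(j_l)=j_l+1$, which your count would otherwise have to handle separately). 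The two fixed-point equivalences, however, require a real argument: one must show, e.g., that $y'(j_l)\neq j_l$ forces the positions carrying $j$-values $j_l-1,j_l-2,\dots$ to lie in $I_f$ and to occur in decreasing word order until one lands in $I_e\setminus I_f$ --- i.e.\ precisely a right chain ending at $l$ --- and conversely; this is a recursion on $j$-values comparing the prefixes of $x_1'$ and $x_2$ before the letter $\s_{j_l-1}$, with the chain's position-monotonicity emerging from which letters lie in those prefixes. That argument (and its mirror for $j_l+1$) is the bulk of the proof and is missing; as written, your proposal is a correct reduction plus an asserted lemma, not a complete proof.
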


\begin{proof}
We prove this by induction on the cardinality of $I_f$. If $I_f$ is the empty set then $x_2^{-1}x_1$ is a Boolean permutation,
$\{j:s_j\le x_2^{-1}x_1\}=\{j_i:i\in I_e\}$ and
\[
N_{I_f}^{I_e}=\{i\in I_e:\text{there exists $r\in I_e$ such that }j_i+1=j_r\}.
\]
The claim follows since any Coxeter element of the symmetric group is a single cycle.

For the induction step suppose that $I_f\ne\emptyset$. Let $i_1$ be the index in $I_f$ with $j_{i_1}$ minimal.

\begin{enumerate}
\item Suppose first that $j_{i_1}-1\ne j_{i'}$ for all $i'\in I_e$.
If $j_{i_1}+1=j_{i_2}$ for some $i_2\in I_e$ then we may assume upon replacing $w$ by $w^{-1}$ (and $\wordd$ by $\wordd^\rev$) if necessary
that $i_2>i_1$. Let $x'_1,x'_2\le w$ be such that $I_{x'_r}=I_{x_r}\setminus\{i_1\}$.
Then $x_2^{-1}x_1=(x_2')^{-1}x'_1$.
Letting $I'_e=I_{x'_1}\cup I_{x'_2}=I_e\setminus\{i_1\}$ and $I'_f=I_{x'_1}\cap I_{x'_2}=I_f\setminus\{i_1\}$
we have $N_{I'_f}^{I'_e}=N_{I_f}^{I_e}$ and therefore $I'_e\setminus(I'_f\cup N_{I'_f}^{I'_e})=I_e\setminus(I_f\cup N_{I_f}^{I_e})$.
Thus, the claim follows from the induction hypothesis.

\item Otherwise, $j_{i_1}-1=j_{i_0}$ for some $i_0\in I_e\setminus I_f$ (by the minimality of $i_1$).
Once again, upon replacing $w$ by $w^{-1}$ (and $\wordd$ by $\wordd^\rev$) if necessary we may assume that $i_1<i_0$.
\begin{enumerate}
\item Suppose first that $i_0\notin N_{I_f}^{I_e}$.
Let $t\ge1$ be the maximal index for which there exist indices $i_t<\dots<i_1$ in $I_f$ such that $j_{i_r}=j_{i_0}+r$, $r=1,\dots,t$.
By the assumption on $i_0$ and $t$, if there exists $i'\in I_e$ such that $j_{i'}=j_{i_t}+1$ then $i'>i_t$. Therefore
$x_2^{-1}x_1=(x'_2)^{-1}x'_1$ where $x_1',x_2'\le w$ are such that $I_{x'_r}=I_{x_r}\setminus\{i_1,\dots,i_t\}$.
Let $I'_e=I_{x_1'}\cup I_{x_2'}=I_e\setminus\{i_1,\dots,i_t\}$ and $I'_f=I_{x'_1}\cap I_{x'_2}=I_f\setminus\{i_1,\dots,i_t\}$.
Then $N_{I'_f}^{I'_e}=N_{I_f}^{I_e}$. This case therefore follows from the induction hypothesis.
\item Suppose that $i_0\in N_{I_f}^{I_e}$ and let
$i_{t+1}<\dots<i_1$, $t\ge1$ be such that $j_{i_r}=j_{i_0}+r$, $r=1,\dots,t+1$ with $i_1,\dots,i_t\in I_f$ and $i_{t+1}=\nu_{I_f}^{I_e}(i_0)\in I_e\setminus I_f$.
Upon interchanging $x_1$ and $x_2$ if necessary we may assume that $i_0\in I_{x_1}\setminus I_{x_2}$.
Let $u$ be the permutation
\[
u(r)=\begin{cases}r+j_{i_0}&r\le t,\\r-t&t<r\le j_{i_t},\\r&r>j_{i_t}\end{cases}
\]
so that
\[
u^{-1}s_{j_{i_1}}s_{j_{i_2}}\dots s_{j_{i_{t+1}}}s_{j_{i_t}}\dots s_{j_{i_0}}u=s_{j_{i_{t+1}}}s_{j_{i_t}}\text{ and }
u^{-1}s_ru=s_{r+t}\text{ for all }r<j_{i_0}.
\]
Let $\wordd'$ be the word obtained from $\wordd$ by removing the $t$ simple roots $\s_{j_{i_1}},\dots,\s_{j_{i_r}}$ (i.e., the indices
$i_1,\dots,i_t$) and replacing $\s_r$ by $\s_{r+t}$ for $r\le j_{i_0}$.
Then $u^{-1}x_2^{-1}x_1u=(x_2')^{-1}x_1'$ where $I_{x_r'}^{\wordd'}=I_{x_r}\setminus\{i_1,\dots,i_t\}$, $r=1,2$.
Let $I'_e=I_{x_1'}\cup I_{x_2'}=I_e\setminus\{i_1,\dots,i_t\}$ and $I'_f=I_{x'_1}\cap I_{x'_2}=I_f\setminus\{i_1,\dots,i_t\}$.
Then $\,^{\wordd'}N_{I'_f}^{I'_e}=N_{I_f}^{I_e}$ and the claim follows from the induction hypothesis.
\end{enumerate}
\end{enumerate}
\end{proof}

Let $K$ be the subgroup of $S_{2n}$ (isomorphic to $S_n\times S_n$) preserving the set $\{2,4,\dots,2n\}$.
For any $x\in K$ let $x_{\odd}\in S_n$ (resp., $x_{\even}\in S_n$) be the permutation such that
$x(2i-1)=2x_{\odd}(i)-1$ (resp., $x(2i)=2x_{\even}(i)$) for $i=1,\dots,n$.
Thus, $x\mapsto(x_{\odd},x_{\even})$ is a group isomorphism $K\simeq S_n\times S_n$.
Note that $x_{\even}=x_{\odd}$ if and only if $x=\duble{x_{\odd}}$.

We recall the following general elementary result.
\begin{lemma} \label{lem: LM} \cite[Lemma 10.6]{1605.08545}
For any $x\in S_{2n}$ we have $K\cap HxH\ne\emptyset$. Let $u\in K\cap HxH$ and let $r$ be the number of non-trivial cosets
of $u_{\even}^{-1}u_{\odd}\in S_n$. Then $\card{K\cap HxH}=2^r$. Moreover, $\sgn$ is constant on $K\cap HxH$.
\end{lemma}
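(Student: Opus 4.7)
We exploit the parameterization of the $H$-double cosets in $S_{2n}$ by $n\times n$ matrices with non-negative integer entries whose row and column sums all equal $2$. To $HxH$ assign the matrix $M$ with entries $M_{ij}=\card{x(\{2i-1,2i\})\cap\{2j-1,2j\}}$; this depends only on the double coset. Since every $u\in K$ satisfies $u(2i-1)=2u_\odd(i)-1$ and $u(2i)=2u_\even(i)$, the matrix associated with $HuH$ is $M_{ij}=[u_\odd(i)=j]+[u_\even(i)=j]$. Thus $u\mapsto(u_\odd,u_\even)$ yields a bijection between $K\cap HxH$ and the set of ordered pairs $(\sigma,\tau)\in S_n\times S_n$ whose permutation matrices sum to $M$. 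The existence of such a decomposition is exactly the Birkhoff--von Neumann theorem, which establishes $K\cap HxH\ne\emptyset$.

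To count these decompositions, attach to $M$ the bipartite multigraph $\graph_M$ with vertex set $\{r_1,\dots,r_n\}\sqcup\{c_1,\dots,c_n\}$ and $M_{ij}$ edges between $r_i$ and $c_j$. Every vertex has degree $2$, so $\graph_M$ is a disjoint union of cycles. Decompositions $M=P_\sigma+P_\tau$ are in bijection with proper $2$-edge-colorings of $\graph_M$ (each vertex incident to one edge of each color). A component which is a digon (arising from $M_{ij}=2$) admits a unique such coloring, whereas a cycle of length $2k$ with $k\ge 2$ admits exactly two. Moreover, traversing such a long cycle $r_{a_1}-c_{b_1}-r_{a_2}-\dots-r_{a_k}-c_{b_k}-r_{a_1}$ with edges alternately coming from $u_\odd$ and $u_\even$ reads off precisely one nontrivial cycle $a_1\mapsto a_2\mapsto\cdots\mapsto a_k$ of $u_\even^{-1}u_\odd$, while the digons correspond to fixed points. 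Hence the long cycles of $\graph_M$ are in bijection with the nontrivial cycles of $u_\even^{-1}u_\odd$, yielding $\card{K\cap HxH}=2^r$.

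For the sign statement, since $u$ preserves parity we have $\sgn u=\sgn u_\odd\cdot\sgn u_\even$. It suffices to show this quantity is unchanged when one swaps the two colors along a single long cycle of $\graph_M$, since such swaps connect the set of $2$-colorings. In such a swap $u_\odd$ is replaced by $u_\odd\circ\sigma$ and $u_\even$ by $u_\even\circ\sigma^{-1}$, where $\sigma$ is a $k$-cycle of $S_n$ supported on $\{a_1,\dots,a_k\}$. The signs of $\sigma$ and $\sigma^{-1}$ cancel, so $\sgn u_\odd\cdot\sgn u_\even$ is invariant.

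The only nonelementary ingredient is Birkhoff--von Neumann; the rest reduces to the combinatorics of $2$-regular bipartite multigraphs. The one place requiring care is the identification of the nontrivial cycles of $u_\even^{-1}u_\odd$ with the long cycles of $\graph_M$: one must verify that this correspondence, and in particular the number $r$, depends only on $M$ (hence on the double coset) and not on the chosen decomposition. This follows immediately once the bijection above is set up, since the partition of $\graph_M$ into cycles is an intrinsic feature of $M$.
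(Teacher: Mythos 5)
The paper does not actually prove this lemma: it is quoted verbatim from \cite[Lemma 10.6]{1605.08545} (note that ``non-trivial cosets'' there is a typo for ``non-trivial cycles'', which you correctly read it as), so there is no internal proof to compare with. Judged on its own, your argument is correct and complete, and it rests only on the two facts the paper itself records as well known: the parameterization of $H\backslash S_{2n}/H$ by $n\times n$ matrices with all row and column sums equal to $2$, and the Birkhoff--von Neumann decomposition of such a matrix as a sum of two permutation matrices. Your translation of $u\in K$ into the pair $(u_{\odd},u_{\even})$ and of the double coset matrix $M$ into the $2$-regular bipartite multigraph $\graph_M$ is accurate; the identification of the long cycles of $\graph_M$ with the non-trivial cycles of $u_{\even}^{-1}u_{\odd}$ (checking along the cycle that the odd-then-even step is $a\mapsto u_{\even}^{-1}u_{\odd}(a)$) is right, and it simultaneously shows that $r$ depends only on the double coset, as needed for the statement to be well posed. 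The count $2^r$ and the sign argument (replacing $(u_{\odd},u_{\even})$ by $(u_{\odd}\sigma,u_{\even}\sigma^{-1})$ for a $k$-cycle $\sigma$ when the colors are swapped on one long cycle, together with $\sgn u=\sgn u_{\odd}\,\sgn u_{\even}$ for parity-preserving $u$) are both correct. One tiny imprecision: as a labeled multigraph a digon has two proper $2$-edge-colorings, but both induce the same decomposition of $M$ on that component, so your count of decompositions is unaffected; phrasing the correspondence at the level of decompositions rather than edge-colorings would remove even this quibble.
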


Note that in general, for any $w\in S_n$, if $u\in K$ and $u_{\odd},u_{\even}\le w$ then $u\le\duble w$.
In the case at hand we can be more explicit.
\begin{lemma} \label{lem: dbls1s2}
Let $u\in K$ with $u_{\odd},u_{\even}\le w$. Then
\begin{equation} \label{eq: iotamask}
u=\pi(\Dwordd[\maskx])\ \ \text{ where }\maskx_i=(1,\chi_{I_{u_{\odd}}}(i),\chi_{I_{u_{\even}}}(i),1),\ \ i=1,\dots,l
\end{equation}
and $\prm(u)=(I_e,I,I_f)$ where $I_e=I_{u_{\odd}}\cup I_{u_{\even}}$, $I_f=I_{u_{\odd}}\cap I_{u_{\even}}$ and
\begin{multline} \label{eq: getI1I2}
I=N_{I_f}^{I_e}\setminus(N_{I_f}^{I_{u_{\odd}}}\cup N_{I_f}^{I_{u_{\even}}})\\=
\{i\in N_{I_f}\cap I_{u_{\odd}}:\nu_{I_f}(i)\in I_{u_{\even}}\}\cup\{i\in N_{I_f}\cap I_{u_{\even}}:\nu_{I_f}(i)\in I_{u_{\odd}}\}.
\end{multline}
\end{lemma}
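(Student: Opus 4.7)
The plan is to verify the two parts of the lemma in sequence: first the mask formula \eqref{eq: iotamask}, then the identification of $\prm(u)$ via Lemma \ref{lem: descoset}.

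For the mask formula, I would exploit the observation that each block $\s_{2j_i}\s_{2j_i-1}\s_{2j_i+1}\s_{2j_i}$ of $\Dwordd$ produces an element of $K$ when sampled by a mask of the form $(1,a,b,1)$: specifically $(1,1,1,1)$ yields $\duble{s_{j_i}}$; $(1,1,0,1)$ yields the pure-odd swap $(2j_i-1,\,2j_i+1)$ obtained from $s_{2j_i}s_{2j_i-1}s_{2j_i}$; $(1,0,1,1)$ yields the pure-even swap $(2j_i,\,2j_i+2)$; and $(1,0,0,1)$ yields the identity. Since $K$ is a subgroup and each block lies in $K$, so does the full product $\pi(\Dwordd[\maskx])$. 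Under $K\simeq S_n\times S_n$ its odd-component is the ordered product $\prod_{i\in I_{u_\odd}}s_{j_i}$, which by the Boolean bijection \eqref{eq: booleanmap} equals $u_\odd$; the even-component equals $u_\even$ by the same reasoning. Hence $\pi(\Dwordd[\maskx])=u$.

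For $\prm(u)$, I would apply Lemma \ref{lem: descoset} to the mask, giving $\prm(u)=Q_\maskx$ as in \eqref{eq: Imaskx}. Using $\maskx_{i,1}=\maskx_{i,4}=1$, the conditions collapse immediately: $\maskx_i\in\{(0,*,*,0),(1,0,0,1)\}$ iff $\maskx_{i,2}=\maskx_{i,3}=0$ iff $i\notin I_{u_\odd}\cup I_{u_\even}$, giving $I_e=I_{u_\odd}\cup I_{u_\even}$; and $\maskx_i=(1,1,1,1)$ iff $i\in I_{u_\odd}\cap I_{u_\even}$, giving $I_f=I_{u_\odd}\cap I_{u_\even}$. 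For $\rshft I$, the comparison in \eqref{eq: rshftI} reduces to $\chi_{I_{u_\even}}(i)\ne\chi_{I_{u_\odd}}(\nu_{I_f}^{I_e}(i))$. Since every $i\in\rshft N_{I_f}^{I_e}$ satisfies $i,\nu_{I_f}^{I_e}(i)\in I_e\setminus I_f=(I_{u_\odd}\setminus I_{u_\even})\sqcup(I_{u_\even}\setminus I_{u_\odd})$, a four-case analysis pins down precisely which $i$ qualify, matching the description in \eqref{eq: getI1I2} via \eqref{eq: NAB}. The symmetric computation for $\lshft I$ yields the analogous identity, and combining gives $I=\rshft I\cup\lshft I$ as stated.

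The main obstacle I expect is the bookkeeping required to pass cleanly between the pointwise conditions \eqref{eq: rshftI}, \eqref{eq: lshftI} and the global set-theoretic description \eqref{eq: getI1I2}. This amounts to repeatedly invoking \eqref{eq: NAB}, together with the fact that $I_f=I_{u_\odd}\cap I_{u_\even}$ forces every $i\in N_{I_f}$ to sit in exactly one of the two sets $I_{u_\odd}\setminus I_{u_\even}$ or $I_{u_\even}\setminus I_{u_\odd}$, so that all four relevant color patterns for the pair $(i,\nu_{I_f}^{I_e}(i))$ are handled uniformly. Once this translation is clean, the equivalence of the two forms of $I$ in the statement becomes a straightforward unpacking.
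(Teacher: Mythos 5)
Your overall route is the same as the paper's: \eqref{eq: iotamask} is verified block by block using that $u\mapsto(u_{\odd},u_{\even})$ is a group isomorphism (this is exactly how the paper reduces it to a one-letter check), and the second part is read off from Lemma \ref{lem: descoset} applied to the mask \eqref{eq: iotamask}. Your identification of $I_e$ and $I_f$ is correct, and so is your reduction of \eqref{eq: rshftI} and \eqref{eq: lshftI} to the single condition $\chi_{I_{u_{\even}}}(i)\ne\chi_{I_{u_{\odd}}}(\nu_{I_f}^{I_e}(i))$.

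The gap is the final step, which you assert rather than carry out: the four-case analysis does \emph{not} match \eqref{eq: getI1I2}. Since $i$ and $\nu_{I_f}^{I_e}(i)$ lie in $I_e\setminus I_f$, exactly one of $\chi_{I_{u_{\odd}}},\chi_{I_{u_{\even}}}$ equals $1$ at each of them, so the inequality $\chi_{I_{u_{\even}}}(i)\ne\chi_{I_{u_{\odd}}}(\nu_{I_f}^{I_e}(i))$ holds precisely when $i$ and $\nu_{I_f}(i)$ lie in the \emph{same} one of $I_{u_{\odd}},I_{u_{\even}}$; via \eqref{eq: NAB} your computation therefore yields $I=N_{I_f}^{I_{u_{\odd}}}\cup N_{I_f}^{I_{u_{\even}}}$, which is the complement inside $N_{I_f}^{I_e}$ of the set displayed in \eqref{eq: getI1I2}. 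This mismatch is genuine and not a slip earlier in your reduction: take $w=s_1s_2\in S_3$, $u_{\odd}=s_1$, $u_{\even}=s_2$, so $u=(321654)$, $I_{u_{\odd}}=\{1\}$, $I_{u_{\even}}=\{2\}$, $I_f=\emptyset$, $N_{I_f}=\{1\}$ with $\nu_{I_f}(1)=2$. Here \eqref{eq: getI1I2} predicts $I=\{1\}$, but in fact $I=\emptyset$: the element $s_2s_3s_4$ occurring in the definition of $\rshft I$ satisfies $s_2s_3s_4\not\le u$ (equivalently, $u=(s_1s_5)\,s_2s_4\,(s_1s_5)$ lies in $Hs_2s_4H$ with $s_2s_4=\iprm((\{1,2\},\emptyset,\emptyset))$, so $\prm(u)$ has empty middle component by Lemma \ref{lem: dblcst} and \eqref{eq: pdc}). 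So the sentence claiming that the case analysis ``matches the description in \eqref{eq: getI1I2}'' is exactly the step that fails: a correct write-up must either prove the identity in the corrected form $I=N_{I_f}^{I_{u_{\odd}}}\cup N_{I_f}^{I_{u_{\even}}}$ and flag that \eqref{eq: getI1I2} as printed appears to be off by this complement, or supply a different argument for the displayed formula, which the above example shows cannot exist.
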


\begin{proof}
Since $u\mapsto(u_{\odd},u_{\even})$ is a group isomorphism, it is enough to check \eqref{eq: iotamask} for the case $n=1$, which is straightforward.
The second part follows from Lemma \ref{lem: descoset}.
\end{proof}

\begin{remark} \label{rem: equ}
Let $A$ and $B$ be subsets of $\{1,\dots,l\}$ with $A\subset B$.
Let $\sim$ be the equivalence relation on $B\setminus A$ generated by $i\sim\nu_A^B(i)$ whenever $i\in N_A^B$.
Any equivalence class $C\subset B\setminus A$ of $\sim$ is of the form
\begin{equation} \label{def: C}
C=\{i_1,\dots,i_a\}
\end{equation}
where
\begin{enumerate}
\item For all $t<a$, $i_t\in N_A^B$ and $\nu_A^B(i_t)=i_{t+1}$. In particular, $j_{i_{t+1}}>j_{i_t}$.
\item $i_a\notin N_A^B$.
\item $i_1\notin\nu_A^B(N_A^B)$.
\end{enumerate}
Thus, each equivalence class contains a unique element outside $N_A^B$.
In particular, the number of equivalence classes of $\sim$ is $\card{B\setminus(A\cup N_A^B)}$.
\end{remark}

\begin{corollary} \label{cor: niota}
For any $Q=(I_e,I,I_f)\in\Quadr$
\[
\card{\{u\in K:u_{\even},u_{\odd}\le w\text{ and }\prm(u)=Q\}}=2^{\card{I_e\setminus(I_f\cup N_{I_f}^{I_e})}}.
\]
Moreover,
\[
\sgn u=(-1)^{\card{I_e\setminus I_f}}
\]
for any $u\in K$ such that $u_{\even},u_{\odd}\le w$ and $\prm(u)=Q$.
\end{corollary}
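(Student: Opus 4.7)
The plan is to translate the problem, via Lemma \ref{lem: dbls1s2}, into counting pairs of subsets of $\{1,\dots,l\}$ subject to combinatorial constraints, then apply the equivalence-class description in Remark \ref{rem: equ}.

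First I would parameterize the elements $u\in K$ with $u_{\odd},u_{\even}\le w$ by pairs $(A,B)=(I_{u_{\odd}},I_{u_{\even}})$ of subsets of $\{1,\dots,l\}$, using the bijection \eqref{eq: booleanmap}. By Lemma \ref{lem: dbls1s2}, $\prm(u)=Q=(I_e,I,I_f)$ imposes $A\cup B=I_e$, $A\cap B=I_f$, and $I=N_{I_f}^{I_e}\setminus(N_{I_f}^A\cup N_{I_f}^B)$. Writing $A=I_f\cup A'$ and $B=I_f\cup B'$, the first two conditions amount to choosing a partition $A'\sqcup B'=I_e\setminus I_f$, so without the third constraint there are $2^{\card{I_e\setminus I_f}}$ choices.

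Next I would analyze the third condition. For $i\in N_{I_f}^{I_e}$ both $i$ and $\nu_{I_f}^{I_e}(i)$ lie in $I_e\setminus I_f$, and since the intermediate indices belong to $I_f\subseteq A\cap B$, one checks directly from the definition that $i\in N_{I_f}^A$ iff $i$ and $\nu_{I_f}^{I_e}(i)$ both lie in $A'$, and analogously for $B$. Hence the constraint is precisely: $i\in I$ iff $i$ and $\nu_{I_f}^{I_e}(i)$ lie in different parts of the partition. Applying Remark \ref{rem: equ} to $A=I_f$, $B=I_e$, within each equivalence class $\{i_1,\dots,i_a\}$ the relative placement of the $i_t$'s in $A'$ vs.\ $B'$ is entirely determined by the data of $I$, so exactly one binary choice remains (namely placing $i_1$). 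The number of classes equals $\card{I_e\setminus(I_f\cup N_{I_f}^{I_e})}$, giving the asserted count $2^{\card{I_e\setminus(I_f\cup N_{I_f}^{I_e})}}$.

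For the sign, since $u\mapsto(u_{\odd},u_{\even})$ is a group isomorphism $K\simeq S_n\times S_n$, we have $\sgn u=\sgn(u_{\odd})\sgn(u_{\even})$. Because $w$ is Boolean and $u_{\odd},u_{\even}\le w$, the Bruhat interval below $w$ is a Boolean lattice, and in particular $\ell(u_{\odd})=\card{I_{u_{\odd}}}=\card{A}$ and similarly for $u_{\even}$; hence $\sgn u=(-1)^{\card{A}+\card{B}}=(-1)^{\card{I_e\setminus I_f}}$, since $\card{A}+\card{B}=\card{A\cup B}+\card{A\cap B}\equiv\card{I_e\setminus I_f}\pmod 2$.

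The only step that requires real care is the equivalence between $i\in N_{I_f}^{I_e}\setminus(N_{I_f}^A\cup N_{I_f}^B)$ and the condition that $i,\nu_{I_f}^{I_e}(i)$ lie in opposite parts of the partition; this is the main (though elementary) obstacle, and rests on the observation that the chain of intermediate indices witnessing $i\in N_{I_f}^{I_e}$ is unique and already lies in $I_f\subseteq A\cap B$, so membership in $N_{I_f}^A$ or $N_{I_f}^B$ is controlled purely by the endpoints.
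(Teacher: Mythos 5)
Your argument is correct and is essentially the paper's own proof: both translate the count via \eqref{eq: booleanmap} and Lemma \ref{lem: dbls1s2} into counting pairs $(I_{u_{\odd}},I_{u_{\even}})$ with prescribed union $I_e$, intersection $I_f$, and condition \eqref{eq: getI1I2}, then observe that this condition fixes the partition of $I_e\setminus I_f$ along each equivalence class of Remark \ref{rem: equ} up to one free binary choice per class, with the sign computation $\sgn u=\sgn u_{\odd}\,\sgn u_{\even}=(-1)^{\card{I_e\setminus I_f}}$ identical in substance. No gaps; your explicit verification that $i\in N_{I_f}^{A}$ is controlled by the endpoints $i,\nu_{I_f}^{I_e}(i)$ is exactly what the paper encodes in \eqref{eq: NAB}.
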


\begin{proof}
Indeed, by \eqref{eq: booleanmap} and Lemma \ref{lem: dbls1s2}, the set on the left-hand side is in bijection with the set of ordered pairs $(I_1,I_2)$ of subsets of $\{1,\dots,l\}$
such that $I_1\cap I_2=I_f$, $I_1\cup I_2=I_e$ and \eqref{eq: getI1I2} holds. Under this bijection $\sgn u=
(-1)^{\card{I_1\triangle I_2}}$ and the symmetric difference $I_1\triangle I_2$ is equal to $I_e\setminus I_f$. This implies the second part.
Now, the map $(I_1,I_2)\mapsto I_1\setminus I_2$ is a bijection between
\[
\{(I_1,I_2):I_1,I_2\subset\{1,\dots,l\},I_1\cap I_2=I_f\text{ and }I_1\cup I_2=I_e\}
\]
and $\power(I_e\setminus I_f)$. 
Moreover, the condition \eqref{eq: getI1I2} holds if and only if for every equivalence class \eqref{def: C} of $\sim$ as above with respect to $A=I_f$ and $B=I_e$
and every $t<a$ we have $\chi_{I_1\setminus I_2}(i_{t+1})=\chi_{I_1\setminus I_2}(i_t)$ if and only $i_t\notin I$.
Thus,  by Remark \ref{rem: equ} $\chi_{I_1\setminus I_2}$ is determined by its values on $B\setminus(A\cup N_A^B)$, which are arbitrary.
The corollary follows.
\end{proof}

Combining the results of this section we obtain
\begin{corollary}
Let $u\in K$. Then $u\le\duble w$ if and only if $u_{\even},u_{\odd}\le w$.
Moreover, the right-hand side of \eqref{eq: altsum0} is
\[
\sum_{u\in K:u\le\duble w\text{ and }\prm(u)=Q}\sgn u=
\pm\card{H\iprm(Q)H\cap K}.
\]
\end{corollary}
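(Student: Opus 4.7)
The plan is to fix $Q=(I_e,I,I_f)\in\Quadr$ and to identify the two sets
\[
S_Q:=\{u\in K:u\le\duble w,\ \prm(u)=Q\},\qquad T_Q:=\{u\in K:u_{\odd},u_{\even}\le w,\ \prm(u)=Q\}.
\]
Once $S_Q=T_Q$ is established for every $Q$, the biconditional in the first assertion (for $u\in K$: $u\le\duble w\iff u_{\odd},u_{\even}\le w$) follows by summing over $Q$, and the alternating-sum identity falls out of formulas already on the page.

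First I would dispose of the easy inclusion $T_Q\subseteq S_Q$: given $u\in T_Q$, Lemma~\ref{lem: dbls1s2} exhibits $u=\pi(\Dwordd[\maskx])$ with the explicit mask in \eqref{eq: iotamask}, and by \eqref{eq: bint} this already forces $u\le\duble w$. The harder inclusion $S_Q\subseteq T_Q$ I would handle by a cardinality count. Corollary~\ref{cor: niota} gives
\[
\card{T_Q}=2^{\card{I_e\setminus(I_f\cup N_{I_f}^{I_e})}},
\]
which is at least $1$, so $T_Q\ne\emptyset$; pick $u_0\in T_Q$. Since $\prm(u_0)=Q$, we have $Hu_0H=H\iprm(Q)H$, so $u_0\in K\cap H\iprm(Q)H$. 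Lemma~\ref{lem: LM} then yields $\card{K\cap H\iprm(Q)H}=2^r$, where $r$ is the number of non-trivial cycles of $(u_0)_{\even}^{-1}(u_0)_{\odd}$, and Lemma~\ref{lem: ncyc} applied with $x_1=(u_0)_{\odd}$, $x_2=(u_0)_{\even}$ evaluates this to $r=\card{I_e\setminus(I_f\cup N_{I_f}^{I_e})}$. Since $T_Q\subseteq S_Q\subseteq K\cap H\iprm(Q)H$ and the outer two sets have the same cardinality, all three coincide.

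For the second assertion, the sum is over $S_Q=T_Q$, and Corollary~\ref{cor: niota} guarantees that every summand carries the common sign $(-1)^{\card{I_e\setminus I_f}}$; therefore the sum equals
\[
(-1)^{\card{I_e\setminus I_f}}\cdot 2^{\card{I_e\setminus(I_f\cup N_{I_f}^{I_e})}},
\]
which is exactly the right-hand side of \eqref{eq: altsum0}. Because $\sgn$ is constant on $K\cap H\iprm(Q)H$ (Lemma~\ref{lem: LM}) and that set coincides with $S_Q$, the same sum equals $\pm\card{K\cap H\iprm(Q)H}$. I do not anticipate any genuine obstacle here: the only nontrivial inputs---the combinatorial cycle count in Lemma~\ref{lem: ncyc}, the parameterization in Corollary~\ref{cor: niota}, and the double-coset analysis of Lemma~\ref{lem: LM}---have all been proved already, and what remains is to reconcile the two parameterizations of $K\cap H\iprm(Q)H$.
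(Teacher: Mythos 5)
Your proof is correct and is essentially the argument the paper intends when it says "combining the results of this section": the easy inclusion via Lemma \ref{lem: dbls1s2}, and the sandwich $T_Q\subseteq S_Q\subseteq K\cap H\iprm(Q)H$ closed up by matching the counts from Corollary \ref{cor: niota} against Lemmas \ref{lem: LM} and \ref{lem: ncyc}, with Proposition \ref{prop: paramdc} and \eqref{eq: pdc} identifying the double coset. No gaps beyond routine details the paper itself leaves implicit.
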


We remark that the first part of the corollary holds in fact for any smooth $w$ \cite[Corollary 10.8]{1605.08545}.

\appendix
\section{Numerical results}
\subsection{}
We have calculated all the polynomials $\tilde P_{x,w}^{(m)}$, $x,w\in S_n$ and verified Conjecture \ref{conj: main}
for $nm\le 12$.
\footnote{We remark that already for $m=2$ we may have
$\deg P_{\duble x,\duble w}>\ell(w)-\ell(x)$ even if $P_{x,w}=1$, e.g. for $(w,x)=(35421,13254)$.}
(Recall that Conjecture \ref{conj: main} is known for $n=2$.)
Let us call a pair $(w,x)$ in $S_n$ \emph{reduced} if it admits no cancelable indices (see
Remark \ref{rem: basicfacts}\eqref{rem: cancelable})
and $xs<x$ (resp., $sx<x$) for any simple reflection $s$ such that $ws<w$ (resp., $sw<w$).
In the following tables we list $\tilde P_{x,w}^{(m)}$ in the cases $nm\le12$ ($n,m>1$) for all reduced pairs $(w,x)$ in $S_n$.
By Conjecture \ref{conj: main} (which we checked at the cases at hand)
and Remark \ref{rem: basicfacts}\eqref{rem: cancelable}, this covers all the polynomials $\tilde P_{x,w}^{(m)}$ without restriction on $(w,x)$.
To avoid repetitions, we only list representatives for the equivalence classes of the relation
$(w,x)\sim(w^{-1},x^{-1})\sim(w_0ww_0,w_0xw_0)\sim(w_0w^{-1}w_0,w_0x^{-1}w_0)$.

\begin{center}
\captionof{table}{$n=4$}
\[
\begin{array}{c|c|c|c}
(w,x) & P_{x,w} & \tilde P_{x,w}^{(2)} & \tilde P_{x,w}^{(3)}\\
\hline
(3412,1324),\
(4231,2143) & 1+q & 1+q+q^2 & 1+q+q^2+q^3\\
\end{array}
\]
\end{center}

\begin{center}
\captionof{table}{$n=5$}
\[
\begin{array}{c|c|c}
(w,x) & P_{x,w} & \tilde P_{x,w}^{(2)}\\
\hline
(35142,13254),\
(52431,21543) & 1+q & 1+q+q^2 \\
(34512,13425),\
(45231,24153) & 1+2q & 1+2q+3q^2 \\
(45312,14325) & 1+q^2 & 1+q^2+q^4 \\
(52341,21354) & 1+2q+q^2 & 1+2q+4q^2+2q^3+q^4 \\
\end{array}
\]
\end{center}

Note that in the cases $n=4,5$ we have $\tilde P_{x,w}^{(2)}=(P_{x,w}^2+P_{x,w}(q^2))/2$.
We split the case $n=6$ according to two subcases.

\begin{center}
\captionof{table}{Cases for $n=6$ where $\tilde P_{x,w}^{(2)}=(P_{x,w}^2+P_{x,w}(q^2))/2$}
\[
\begin{array}{c|c|c}
(w,x) & P_{x,w} & \tilde P_{x,w}^{(2)}\\
\hline
(361452,143265),\
(361542,132654),\
(426153,214365),&&\\
(562341,254163),\
(625431,216543) & 1+q & 1+q+q^2 \\
\hline
(346152,134265),\
(356142,135264),\
(462513,241635),&&\\
(462531,241653),\
(562431,251643)& 1+2q & 1+2q+3q^2 \\
\hline
(356412,135426),\
(463152,143265),\
(465132,143265),&&\\
(564231,254163),\
(632541,321654),\
(653421,321654) & 1+q+q^2 & 1+q+2q^2+q^3+q^4 \\
\hline
(456312,145326) & 1+2q^2 & 1+2q^2+3q^4 \\
\hline
(351624,132546),\
(354612,132546),\
(361452,132465),&&\\
(364152,143265),\
(456132,143265),\
(462351,243165),&&\\
(463512,143625),\
(465231,243165),\
(562341,321654),&&\\
(563421,321654),\
(623541,213654),\
(624351,214365),&&\\
(624531,214653),\
(634521,321654),\
(635241,326154) & 1+2q+q^2 & 1+2q+4q^2+2q^3+q^4 \\
\hline
(456123,145236) & 1+4q+q^2 & 1+4q+11q^2+4q^3+q^4 \\
(564312,154326) & 1+q^3 & 1+q^3+q^6\\
\end{array}
\]
\end{center}

\begin{center}
\captionof{table}{Cases for $n=6$ where $\tilde P_{x,w}^{(2)}\ne(P_{x,w}^2+P_{x,w}(q^2))/2$}
\[
\begin{array}{c|c|c}
(w,x) & P_{x,w} & \tilde P_{x,w}^{(2)} \\
\hline
(345612,134526),\
(456231,245163),&&\\
(563412,351624) & 1+3q & 1+3q+7q^2 \\
\hline
(364512,132654),\
(426351,214365),&&\\
(456123,214365),\
(623451,214365),&&\\
(563412,154326),\
(645231,216543) & 1+2q+q^2 & 1+2q+3q^2+2q^3+q^4 \\
\hline
(345612,132546),\
(356124,135246),&&\\
(364512,143625),\
(456123,143265),&&\\
(456231,215463),\
(562341,251463),&&\\
(563412,153624),\
(563412,321654),&&\\
(645231,426153) & 1+3q+q^2 & 1+3q+8q^2+3q^3+q^4 \\
\hline
(364512,132645),\
(456123,124365),&&\\
(456231,214365),\
(563412,153426),&&\\
(563412,132654),\
(645231,216453) & 1+3q+2q^2 & 1+3q+9q^2+8q^3+3q^4 \\
\hline
(462351,241365),\
(562341,231654) & 1+3q+2q^2 & 1+3q+9q^2+7q^3+3q^4 \\
\hline
(456123,125436) & 1+4q+2q^2 & 1+4q+14q^2+13q^3+3q^4 \\
(562341,251364) & 1+4q+3q^2 & 1+4q+17q^2+16q^3+6q^4 \\
(562341,231564) & 1+4q+4q^2 & 1+4q+21q^2+30q^3+14q^4 \\
(623451,213465) & 1+3q+3q^2+q^3 & 1+3q+10q^2+13q^3+10q^4+3q^5+q^6\\
(563412,132546),\
(645231,214365) & 1+3q+3q^2+q^3 & 1+3q+10q^2+16q^3+13q^4+5q^5+q^6\\
(456123,124356) & 1+4q+4q^2+q^3 & 1+4q+18q^2+34q^3+27q^4+4q^5+q^6 \\
\end{array}
\]
\end{center}

\subsection{Implementation}
For the computation, we actually wrote and executed a C program to calculate all ordinary Kazhdan--Lusztig polynomials
$P_{x,w}$ for the symmetric groups $S_k$, $k\le12$. As far as we know this computation is already new for $k=11$.
(See \cite{MR1959749} and \cite{MR2859901} for accounts of earlier computations, as well as the documentation
of the Atlas software and other mathematical software packages.)
As always, the computation proceeds with the original recursive formula of Kazhdan--Lusztig \cite{MR560412}
\[
P_{x,w}=q^cP_{x,ws}+q^{1-c}P_{xs,ws}-\sum_{z:zs<z<ws}\mu(z,ws)q^{\frac{\ell(w)-\ell(z)}2}P_{x,z}
\]
where $\mu(x,y)$ is the coefficient of $q^{(\ell(y)-\ell(x)-1)/2}$ (the largest possible degree) in $P_{x,y}$,
$s$ is a simple reflection such that $ws<w$ and $c$ is $1$ if $xs<x$ and $0$ otherwise.
However, some special features of the symmetric group allow for a faster, if ad hoc, code.
(See Remark \ref{rem: basicfacts}\eqref{rem: cancelable} and the comments below.)
For $S_{11}$ the program runs on a standard laptop (a Lenovo T470s 2017 model with 2.7 GHz CPU and 16GB RAM)
in a little less than $3$ hours.
For $S_{12}$ the memory requirements are about 500 GB RAM.
We ran it on the computer of the Faculty of Mathematics and Computer Science of the
Weizmann Institute of Science (SGI, model UV-10), which has one terabyte RAM and 2.67 GHz CPU.
The job was completed after almost a month of CPU time on a single core.

Let us give a few more details about the implementation.
We say that a pair $(w,x)$ is fully reduced if it is reduced (see above) and $x\le ws,sw$ for any simple reflection $s$.
Recall that we only need to compute $P_{x,w}$ for fully reduced pairs. The number of fully reduced pairs for $S_{12}$, up to symmetry,
is about $46\times 10^9$. However, a posteriori, the number of distinct polynomials obtained is ``only'' about
$4.3\times 10^9$. This phenomenon (which had been previously observed for smaller symmetric groups) is crucial for the implementation
since it makes the memory requirements feasible.
An equally important feature, which once again had been noticed before for smaller symmetric groups, is that
only for a small fraction of the pairs above, namely about $66.5\times 10^6$, we have $\mu(x,w)>0$.
This fact cuts down significantly the number of summands in the recursive formula and makes the computation feasible
in terms of time complexity.

We store the results as follows.
\begin{enumerate}
\item A ``glossary'' of the $\sim 4.3\times 10^9$ different polynomials. (The coefficients of the vast majority of the polynomials
are smaller than $2^{16}=65536$. The average degree is about $10$.)
\item A table with $\sim 46\times 10^9$ entries that provides for each reduced pair the pointer to $P_{x,w}$
in the glossary.
\item An additional lookup table of size $12!\sim 0.5\times 10^9$ (which is negligible compared to the previous one)
so that in the previous table we only need to record $x$ and the pointer to $P_{x,w}$
(which can be encoded in $29$ and $33$ bits, respectively), but not $w$.
\item A table with $\sim 66.5\times 10^6$ entries recording $x$, $w$, $\mu(x,w)$ for all fully reduced pairs (up to symmetry)
with $\mu(x,w)>0$.
\end{enumerate}
Thus, the main table is of size $\sim 8\times 46\times 10^9$ bytes, or about $340$ GB.
This is supplemented by the glossary table which is of size $< 100$ GB, plus auxiliary tables of insignificant size.
Of course, by the nature of the recursive algorithm all these tables have to be stored in the RAM.

We mention a few additional technical aspects about the program.
\begin{enumerate}
\item The outer loop is over all permutation $w\in S_n$ in lexicographic order.
Given $w\in S_{12}$ it is possible to enumerate efficiently the pairs $(w,x)$
such that $xs<x$ (resp., $sx<x$) whenever $ws<w$ (resp., $sw<w$). More precisely, given such $x<w$
we can very quickly find the next such $x$ in lexicographic order. Moreover, one can incorporate
the ``non-cancelability'' condition to this ``advancing'' procedure and then test the condition
$x\le ws, sw$ for the remaining $x$'s.
Thus, it is perfectly feasible to enumerate the $\sim 46\times 10^9$ fully reduced pairs.
\item On the surface, the recursive formula requires a large number of additions and multiplications in each step.
However, in reality, the number of summands is usually relatively small, since the $\mu$-function is rarely non-zero.
\item For each $w\ne 1$ the program picks the first simple root $s$ (in the standard ordering) such that $ws<w$
and produces the list of $z$'s such that $zs<z<ws$ and $\mu(z,ws)>0$. The maximal size of this list turns out to be $\sim 100,000$
but it is usually much much smaller. The list is then used to compute $P_{x,w}$ (and in particular, $\mu(x,w)$)
for all fully reduced pairs using the recursion formula and the polynomials already generated for $w'<w$.
Of course, for any given $x$ only the $z$'s with $x\le z$ matter.
\item Since we only keep the data for fully reduced pairs (in order to save memory) we need to find,
for any given pair the fully reduced pair which ``represents'' it. Fortunately, this procedure
is reasonably quick.
\item The glossary table is continuously updated and stored as 1,000 binary search trees, eventually consisting of
$\sim 4.3\times 10^6$ internal nodes each.
The data is sufficiently random so that there is no need to balance the trees. The memory overhead for maintaining the trees
is inconsequential.
\item In principle, it should be possible to parallelize the program so that it runs simultaneously on many processors.
The point is that the recursive formula only requires the knowledge of $P_{x',w'}$ with $w'<w$,
so we can compute all $P_{x,w}$'s with a fixed $\ell(w)$ in parallel.
For technical reasons we haven't been able to implement this parallelization.
\end{enumerate}

As a curious by-product of our computation we get
\begin{corollary}
The values of $\mu(x,w)$ for $x,w\in S_{12}$ are given by
\[
\{1,2,3,4,5,6,7,8,9,10,17,18,19,20,21,22,23,24,25,26,27,28,158,163\}.
\]
\end{corollary}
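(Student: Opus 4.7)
The plan is to read off the image of the $\mu$-function directly from the auxiliary table produced in the main computation described above. Recall that, by definition, $\mu(x,w)$ is the coefficient of $q^{(\ell(w)-\ell(x)-1)/2}$ in $P_{x,w}$, and that the Kazhdan--Lusztig recursion itself requires $\mu(z,ws)$ for every simple reflection $s$ with $ws<w$ and every $z$ with $zs<z<ws$. Consequently the program must already tabulate, as item~(4) in the description of stored data, all fully reduced pairs $(x,w)$ in $S_{12}$ (up to the symmetries $(x,w)\sim(x^{-1},w^{-1})\sim(w_0xw_0,w_0ww_0)$) for which $\mu(x,w)>0$; this table has roughly $66.5\times 10^6$ entries. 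The corollary is then the assertion that the set of distinct values appearing in the $\mu$-column of that table is precisely
\[
\{1,2,\ldots,10,17,18,\ldots,28,158,163\}.
\]
The verification is a single linear scan once the table is built.

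The feasibility of producing the table rests on two classical reductions that are already incorporated in the implementation. First, by Remark~\ref{rem: basicfacts}(\ref{rem: cancelable}) the invariant $\mu(x,w)$ is unchanged under the cancelable-index operation, so it suffices to enumerate reduced pairs; second, the symmetries $\mu(x,w)=\mu(x^{-1},w^{-1})=\mu(w_0xw_0,w_0ww_0)$ cut the storage by a further factor of (approximately) four. With these two reductions in place the auxiliary $\mu$-table fits comfortably in memory alongside the main table of pointers into the polynomial glossary, and the recursion can be executed in the order dictated by the outer loop over $w\in S_{12}$ in lexicographic order.

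The hard part is not mathematical but computational correctness of the implementation. I would guard against this by two kinds of tests. First, comparison of the output restricted to $S_k$ for $k\le 10$ against independently produced tables (as listed in \cite{MR1959749, MR2859901} and available through existing software); my own run for $S_{11}$ can then be cross-checked against a different implementation, and once $S_{11}$ agrees the corresponding values for $S_{12}$ obtained by a single additional recursive step inherit that confidence. Second, internal sanity checks at each stage: that $P_{w,w}=1$, that $\deg P_{x,w}\le\lfloor(\ell(w)-\ell(x)-1)/2\rfloor$, that $P_{x^{-1},w^{-1}}=P_{x,w}$, and that the coefficients are non-negative. Once these tests pass, reading off the list of $\mu$-values from the tabulated data completes the proof of the corollary.
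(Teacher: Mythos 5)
Your proposal is correct and coincides with the paper's own justification: the corollary is stated there precisely as a by-product of the exhaustive computation of all $P_{x,w}$ for $S_{12}$ (using the same reduced-pair and symmetry reductions and the stored table of pairs with $\mu>0$), so reading off the distinct $\mu$-values from that table is exactly what the paper does. Your added cross-checks against smaller $S_k$ and internal sanity tests are sensible supplements but do not change the approach.
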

This complements \cite[Theorem 1.1]{MR2859901}. The new values of $\mu$ are $9,10,17,19,20,21,22$.

\begin{center}
\captionof{table}{Values of $\mu(x,w)$ and pairs attaining them for $S_{12}$}
\[
\begin{array}{c|c|c|c}
w&x&\deg P_{x,w}&\mu(x,w)\\
\hline
35608ab12794 & 032168754ba9 & 6  & 2\\
25789a0b1346 & 0251843a976b & 7  & 3\\
245a6789b013 & 0216543a987b & 8  & 4\\
245789ab0136 & 0215439876ba & 8  & 5\\
35689a0b1247 & 0326541987ba & 8  & 6\\
356a890b1247 & 0326541a987b & 8  & 7\\
36789ab01245 & 0327198654ba & 7  & 8\\
5789a0b12346 & 0524319876ba & 9  & 9\\
789ab1234560 & 210876453ba9 & 10 & 10\\
792b4560a183 & 21076543ba98 & 9  & 17\\
28a3b5670914 & 032187654ba9 & 10 & 18\\
79ab34568012 & 0764321a985b & 11 & 19\\
48a5679b0123 & 054321a9876b & 9  & 20\\
4a56789b0123 & 054321a9876b & 8  & 21\\
46978ab01235 & 0432198765ba & 9  & 22\\
48967ab01235 & 0432198765ba & 10 & 23\\
289a3b456071 & 0432198765ba & 10 & 24\\
46b789a01235 & 043218765ba9 & 11 & 25\\
36789ab01245 & 032187654a9b & 9  & 26\\
3789ab012456 & 032187654a9b & 9  & 27\\
29ab45678013 & 054321a9876b & 10 & 28\\
9ab345678012 & 054321a9876b & 12 & 158\\
89ab34567012 & 054321a9876b & 13 & 163
\end{array}
\]
\end{center}

Complete tables listing the fully reduced pairs in $S_k$, $k\le12$ with $\mu>0$ (together with their $\mu$-value)
are available upon request. The size of the compressed file for $S_{12}$ is 200MB.

Finally, I would like to take this opportunity to thank Amir Gonen,
the Unix system engineer of our faculty, for his technical assistance with running this heavy-duty job.

\def\cprime{$'$}
\providecommand{\bysame}{\leavevmode\hbox to3em{\hrulefill}\thinspace}
\providecommand{\MR}{\relax\ifhmode\unskip\space\fi MR }
\providecommand{\MRhref}[2]{%
  \href{http://www.ams.org/mathscinet-getitem?mr=#1}{#2}
}
\providecommand{\href}[2]{#2}


\begin{thebibliography}{BMB07}

\bibitem[BB05]{MR2133266}
Anders Bj{\"o}rner and Francesco Brenti, \emph{Combinatorics of {C}oxeter
  groups}, Graduate Texts in Mathematics, vol. 231, Springer, New York, 2005.
  \MR{2133266 (2006d:05001)}

\bibitem[BBL98]{MR1620935}
Prosenjit Bose, Jonathan~F. Buss, and Anna Lubiw, \emph{Pattern matching for
  permutations}, Inform. Process. Lett. \textbf{65} (1998), no.~5, 277--283.
  \MR{1620935}

\bibitem[BC17]{MR3558217}
Francesco Brenti and Fabrizio Caselli, \emph{Peak algebras, paths in the
  {B}ruhat graph and {K}azhdan-{L}usztig polynomials}, Adv. Math. \textbf{304}
  (2017), 539--582. \MR{3558217}

\bibitem[BH99]{MR1688445}
Brigitte Brink and Robert~B. Howlett, \emph{Normalizers of parabolic subgroups
  in {C}oxeter groups}, Invent. Math. \textbf{136} (1999), no.~2, 323--351.
  \MR{1688445}

\bibitem[BJS93]{MR1241505}
Sara~C. Billey, William Jockusch, and Richard~P. Stanley, \emph{Some
  combinatorial properties of {S}chubert polynomials}, J. Algebraic Combin.
  \textbf{2} (1993), no.~4, 345--374. \MR{1241505}

\bibitem[BMB07]{MR2376109}
Mireille Bousquet-M\'elou and Steve Butler, \emph{Forest-like permutations},
  Ann. Comb. \textbf{11} (2007), no.~3-4, 335--354. \MR{2376109}

\bibitem[BMS16]{MR3497995}
Francesco Brenti, Pietro Mongelli, and Paolo Sentinelli, \emph{Parabolic
  {K}azhdan-{L}usztig polynomials for quasi-minuscule quotients}, Adv. in Appl.
  Math. \textbf{78} (2016), 27--55. \MR{3497995}

\bibitem[Bor98]{MR1654763}
Richard~E. Borcherds, \emph{Coxeter groups, {L}orentzian lattices, and {$K3$}
  surfaces}, Internat. Math. Res. Notices (1998), no.~19, 1011--1031.
  \MR{MR1654763 (2000a:20088)}

\bibitem[Bre02]{MR1972246}
Francesco Brenti, \emph{Kazhdan-{L}usztig and {$R$}-polynomials, {Y}oung's
  lattice, and {D}yck partitions}, Pacific J. Math. \textbf{207} (2002), no.~2,
  257--286. \MR{1972246}

\bibitem[BW01]{MR1826948}
Sara~C. Billey and Gregory~S. Warrington, \emph{Kazhdan-{L}usztig polynomials
  for 321-hexagon-avoiding permutations}, J. Algebraic Combin. \textbf{13}
  (2001), no.~2, 111--136. \MR{1826948}

\bibitem[BW03]{MR1990570}
\bysame, \emph{Maximal singular loci of {S}chubert varieties in {${\rm
  SL}(n)/B$}}, Trans. Amer. Math. Soc. \textbf{355} (2003), no.~10, 3915--3945.
  \MR{1990570}

\bibitem[BY13]{MR3003920}
Roman Bezrukavnikov and Zhiwei Yun, \emph{On {K}oszul duality for {K}ac-{M}oody
  groups}, Represent. Theory \textbf{17} (2013), 1--98. \MR{3003920}

\bibitem[dC02]{MR1959749}
Fokko du~Cloux, \emph{Computing {K}azhdan-{L}usztig polynomials for arbitrary
  {C}oxeter groups}, Experiment. Math. \textbf{11} (2002), no.~3, 371--381.
  \MR{1959749 (2004j:20084)}

\bibitem[Dem74]{MR0354697}
Michel Demazure, \emph{D\'esingularisation des vari\'et\'es de {S}chubert
  g\'en\'eralis\'ees}, Ann. Sci. \'Ecole Norm. Sup. (4) \textbf{7} (1974),
  53--88, Collection of articles dedicated to Henri Cartan on the occasion of
  his 70th birthday, I. \MR{0354697 (50 \#7174)}

\bibitem[Deo85]{MR788771}
Vinay~V. Deodhar, \emph{Local {P}oincar\'e duality and nonsingularity of
  {S}chubert varieties}, Comm. Algebra \textbf{13} (1985), no.~6, 1379--1388.
  \MR{788771 (86i:14015)}

\bibitem[Deo87]{MR916182}
\bysame, \emph{On some geometric aspects of {B}ruhat orderings. {II}. {T}he
  parabolic analogue of {K}azhdan-{L}usztig polynomials}, J. Algebra
  \textbf{111} (1987), no.~2, 483--506. \MR{916182 (89a:20054)}

\bibitem[Deo90]{MR1065215}
\bysame, \emph{A combinatorial setting for questions in {K}azhdan-{L}usztig
  theory}, Geom. Dedicata \textbf{36} (1990), no.~1, 95--119. \MR{1065215
  (91h:20075)}

\bibitem[Deo94]{MR1278702}
Vinay Deodhar, \emph{A brief survey of {K}azhdan-{L}usztig theory and related
  topics}, Algebraic groups and their generalizations: classical methods
  ({U}niversity {P}ark, {PA}, 1991), Proc. Sympos. Pure Math., vol.~56, Amer.
  Math. Soc., Providence, RI, 1994, pp.~105--124. \MR{1278702 (96d:20039)}

\bibitem[EW14]{MR3245013}
Ben Elias and Geordie Williamson, \emph{The {H}odge theory of {S}oergel
  bimodules}, Ann. of Math. (2) \textbf{180} (2014), no.~3, 1089--1136.
  \MR{3245013}

\bibitem[Fan98]{MR1603806}
C.~K. Fan, \emph{Schubert varieties and short braidedness}, Transform. Groups
  \textbf{3} (1998), no.~1, 51--56. \MR{1603806 (98m:14052)}

\bibitem[FG97]{MR1441960}
C.~K. Fan and R.~M. Green, \emph{Monomials and {T}emperley-{L}ieb algebras}, J.
  Algebra \textbf{190} (1997), no.~2, 498--517. \MR{1441960 (98a:20037)}

\bibitem[Fox]{Fox}
Jacob Fox, \emph{{S}tanley-{W}ilf limits are typically exponential}, Adv. Math.
  \textbf{to appear}, arXiv:1310.8378.

\bibitem[GM14]{MR3376367}
Sylvain Guillemot and D\'aniel Marx, \emph{Finding small patterns in
  permutations in linear time}, Proceedings of the {T}wenty-{F}ifth {A}nnual
  {ACM}-{SIAM} {S}ymposium on {D}iscrete {A}lgorithms, ACM, New York, 2014,
  pp.~82--101. \MR{3376367}

\bibitem[Hen07]{MR2320806}
Anthony Henderson, \emph{Nilpotent orbits of linear and cyclic quivers and
  {K}azhdan-{L}usztig polynomials of type {A}}, Represent. Theory \textbf{11}
  (2007), 95--121 (electronic). \MR{2320806}

\bibitem[How80]{MR576184}
Robert~B. Howlett, \emph{Normalizers of parabolic subgroups of reflection
  groups}, J. London Math. Soc. (2) \textbf{21} (1980), no.~1, 62--80.
  \MR{576184}

\bibitem[JW13]{MR3027577}
Brant Jones and Alexander Woo, \emph{Mask formulas for cograssmannian
  {K}azhdan-{L}usztig polynomials}, Ann. Comb. \textbf{17} (2013), no.~1,
  151--203. \MR{3027577}

\bibitem[KL79]{MR560412}
David Kazhdan and George Lusztig, \emph{Representations of {C}oxeter groups and
  {H}ecke algebras}, Invent. Math. \textbf{53} (1979), no.~2, 165--184.
  \MR{560412 (81j:20066)}

\bibitem[KT02]{MR1901161}
Masaki Kashiwara and Toshiyuki Tanisaki, \emph{Parabolic {K}azhdan-{L}usztig
  polynomials and {S}chubert varieties}, J. Algebra \textbf{249} (2002), no.~2,
  306--325. \MR{1901161 (2004a:14049)}

\bibitem[Lap17]{1710.06115}
Erez Lapid, \emph{A tightness property of relatively smooth permutations},
  2017, arXiv:1710.06115.

\bibitem[Las95]{MR1354702}
Alain Lascoux, \emph{Polyn\^omes de {K}azhdan-{L}usztig pour les vari\'et\'es
  de {S}chubert vexillaires}, C. R. Acad. Sci. Paris S\'er. I Math.
  \textbf{321} (1995), no.~6, 667--670. \MR{1354702 (96g:05144)}

\bibitem[LM16]{1605.08545}
Erez Lapid and Alberto M{\'{\i}}nguez, \emph{Geometric conditions for
  $\square$-irreducibility of certain representations of the general linear
  group over a non-archimedean local field}, 2016, arXiv:1605.08545.

\bibitem[LS90]{MR1051089}
V.~Lakshmibai and B.~Sandhya, \emph{Criterion for smoothness of {S}chubert
  varieties in {${\rm Sl}(n)/B$}}, Proc. Indian Acad. Sci. Math. Sci.
  \textbf{100} (1990), no.~1, 45--52. \MR{1051089}

\bibitem[Lus93]{MR1261904}
G.~Lusztig, \emph{Tight monomials in quantized enveloping algebras}, Quantum
  deformations of algebras and their representations ({R}amat-{G}an, 1991/1992;
  {R}ehovot, 1991/1992), Israel Math. Conf. Proc., vol.~7, Bar-Ilan Univ.,
  Ramat Gan, 1993, pp.~117--132. \MR{1261904}

\bibitem[Lus03]{MR1974442}
\bysame, \emph{Hecke algebras with unequal parameters}, CRM Monograph Series,
  vol.~18, American Mathematical Society, Providence, RI, 2003. \MR{1974442}

\bibitem[Lus77]{MR0453885}
\bysame, \emph{Coxeter orbits and eigenspaces of {F}robenius}, Invent. Math.
  \textbf{38} (1976/77), no.~2, 101--159. \MR{0453885}

\bibitem[LW17]{1702.00459}
Nicolas Libedinsky and Geordie Williamson, \emph{The anti-spherical category},
  2017, arXiv:1702.00459.

\bibitem[Mac04]{MR2417935}
Percy~A. MacMahon, \emph{Combinatory analysis. {V}ol. {I}, {II} (bound in one
  volume)}, Dover Phoenix Editions, Dover Publications, Inc., Mineola, NY,
  2004, Reprint of {\it An introduction to combinatory analysis} (1920) and
  {\it Combinatory analysis. Vol. I, II} (1915, 1916). \MR{2417935}

\bibitem[Mon14]{MR3159260}
Pietro Mongelli, \emph{Kazhdan-{L}usztig polynomials of {B}oolean elements}, J.
  Algebraic Combin. \textbf{39} (2014), no.~2, 497--525. \MR{3159260}

\bibitem[MT04]{MR2063960}
Adam Marcus and G\'abor Tardos, \emph{Excluded permutation matrices and the
  {S}tanley-{W}ilf conjecture}, J. Combin. Theory Ser. A \textbf{107} (2004),
  no.~1, 153--160. \MR{2063960}

\bibitem[Sen14]{MR3166061}
Paolo Sentinelli, \emph{Isomorphisms of {H}ecke modules and parabolic
  {K}azhdan-{L}usztig polynomials}, J. Algebra \textbf{403} (2014), 1--18.
  \MR{3166061}

\bibitem[SW04]{MR2043806}
Zvezdelina Stankova and Julian West, \emph{Explicit enumeration of 321,
  hexagon-avoiding permutations}, Discrete Math. \textbf{280} (2004), no.~1-3,
  165--189. \MR{2043806}

\bibitem[Ten07]{MR2333139}
Bridget~Eileen Tenner, \emph{Pattern avoidance and the {B}ruhat order}, J.
  Combin. Theory Ser. A \textbf{114} (2007), no.~5, 888--905. \MR{2333139}

\bibitem[War11]{MR2859901}
Gregory~S. Warrington, \emph{Equivalence classes for the {$\mu$}-coefficient of
  {K}azhdan-{L}usztig polynomials in {$S_n$}}, Exp. Math. \textbf{20} (2011),
  no.~4, 457--466. \MR{2859901 (2012j:05460)}

\bibitem[Wes96]{MR1417303}
Julian West, \emph{Generating trees and forbidden subsequences}, Proceedings of
  the 6th {C}onference on {F}ormal {P}ower {S}eries and {A}lgebraic
  {C}ombinatorics ({N}ew {B}runswick, {NJ}, 1994), vol. 157, 1996,
  pp.~363--374. \MR{1417303}

\bibitem[Yun09]{MR2480718}
Zhiwei Yun, \emph{Weights of mixed tilting sheaves and geometric {R}ingel
  duality}, Selecta Math. (N.S.) \textbf{14} (2009), no.~2, 299--320.
  \MR{2480718}

\end{thebibliography}
\end{document}